\newcommand{\invl}{\llbracket\hspace{-.12cm} \llbracket}
\newcommand{\invr}{\rrbracket\hspace{-.12cm} \rrbracket}
\newcommand{\vars}{\invl x_1,x_2,\dots\invr}
\newtheorem*{theorem*}{Theorem}
\newtheorem{hyp}[equation]{Hypothesis}
\newenvironment{hypothesis}[1][]{\begin{hyp}[#1] \pushQED{\qed}}{\popQED \end{hyp}}
\newcommand{\stacks}[1]{\cite[Tag \href{https://stacks.math.columbia.edu/tag/#1}{#1}]{stacks-project}}
\title{Big polynomial rings and Stillman's Conjecture}
\author{Daniel Erman}
\address{Department of Mathematics, University of Wisconsin, Madison, WI}
\email{\href{mailto:derman@math.wisc.edu}{derman@math.wisc.edu}}
\urladdr{\url{http://math.wisc.edu/~derman/}}
\author{Steven V Sam}
\address{Department of Mathematics, University of Wisconsin, Madison, WI}
\curraddr{Department of Mathematics, University of California, San Diego, CA}
\email{\href{mailto:ssam@ucsd.edu}{ssam@ucsd.edu}}
\urladdr{\url{http://math.ucsd.edu/~ssam/}}
\author{Andrew Snowden}
\address{Department of Mathematics, University of Michigan, Ann Arbor, MI}
\email{\href{mailto:asnowden@umich.edu}{asnowden@umich.edu}}
\urladdr{\url{http://www-personal.umich.edu/~asnowden/}}
\thanks{DE was partially supported by NSF DMS-1302057 and NSF DMS-1601619.}
\thanks{SS was partially supported by NSF DMS-1500069 and DMS-1651327 and a Sloan Fellowship.}
\thanks{AS was supported by NSF DMS-1303082 and DMS-1453893 and a Sloan Fellowship.}
\date{July 4, 2022}
\subjclass[2010]{%
13A02, 
13D02.
}
\begin{document}

\maketitle

\section{Introduction}

The purpose of this paper is to prove that certain limits of polynomial rings are themselves polynomial rings, and show how this observation can be used to deduce some interesting results in commutative algebra. In particular, we give two new proofs of Stillman's conjecture \cite[Problem 3.14]{stillman}. The first is similar to that of Ananyan--Hochster \cite{ananyan-hochster}, though more streamlined; in particular, it establishes the existence of small subalgebras. The second proof is completely different, and relies on a recent noetherianity result of Draisma \cite{draisma}.

\subsection{Polynomiality results}

For a commutative ring $A$, let $A \vars$ be the inverse limit of the standard-graded polynomial rings $A[x_1,\ldots,x_n]$ in the category of graded rings. A degree $d$ element of this ring is a (possibly infinite) formal $A$-linear combination of degree $d$ monomials in the variables $\{x_i\}_{i \ge 1}$. Fix a field $\bk$, and let $\bR=\bk\vars$. Our first polynomiality theorem is:

\begin{theorem} \label{introthm1}
Assume $\bk$ is perfect. Then $\bR$ is (isomorphic to) a polynomial ring. 
\end{theorem}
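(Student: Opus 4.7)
The plan is to construct a $\bk$-algebra generating set $B$ for $\bR$ inductively by degree and then show that the natural graded map $\phi \colon \bk[B] \to \bR$ is an isomorphism. For each $d \geq 1$, let $D_d = \sum_{i=1}^{d-1} \bR_i \bR_{d-i} \subseteq \bR_d$ denote the decomposable subspace, and pick $B_d \subseteq \bR_d$ lifting a $\bk$-basis of $\bR_d/D_d$; set $B = \bigsqcup_{d \geq 1} B_d$. Surjectivity of $\phi$ is routine by induction on $d$: the identity $\bR_d = D_d + \bk\langle B_d \rangle$, together with the inductive hypothesis that $\bR_i \subseteq \operatorname{im}\phi$ for $i < d$, yields $\bR_d \subseteq \operatorname{im}\phi$.

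The core of the argument is injectivity of $\phi$, i.e., that for every $d$ the degree-$d$ monomials in $B$ are $\bk$-linearly independent in $\bR_d$. I would reduce this to showing that natural maps of the form $\mathrm{Sym}^\bullet(W) \to \bR$ are injective, where $W = \bigoplus_i \bk\langle B_i \rangle$ is viewed as a graded subspace of $\bR$. The base case $\mathrm{Sym}^2(\bR_1) \hookrightarrow \bR_2$ can be verified directly by identifying the multiplication map $\bR_1 \otimes \bR_1 \to \bR_2$ via the coefficient-matrix construction as a quotient of the embedding $\bR_1 \otimes \bR_1 \hookrightarrow \prod_{i,j}\bk$ (which is injective since $\bR_1 = \prod_i \bk$), noting that the kernel on the $\bR_2$ side is precisely the antisymmetric part. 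A complementary strategy is specialization: given any finite subset $\{b_1,\ldots,b_k\} \subseteq B$, choose $N$ large enough that the truncations $\pi_N(b_j) \in \bk[x_1,\ldots,x_N]$ are algebraically independent in the honest polynomial ring, whence any candidate relation in $\bR$ descends to a trivial relation downstairs.

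The perfectness hypothesis on $\bk$ should enter through Frobenius phenomena in positive characteristic. For instance, an infinite sum $\sum_i c_i x_i^p \in \bR_p$ equals $(\sum_i c_i^{1/p} x_i)^p$, and therefore lies in $D_p$, precisely when each $c_i$ has a $p$-th root in $\bk$ --- i.e., when $\bk$ is perfect. Without perfectness, such elements would be genuinely new generators in $B_p$ that could introduce hidden algebraic dependencies; perfectness ensures that every such would-be $p$-th power is already decomposable, and the inductive structure survives intact. The principal obstacle I anticipate is executing the injectivity step uniformly across all degrees and characteristics: because $\bR_d$ is uncountable-dimensional, the usual finite-dimensional tools for verifying algebraic independence (Gr\"obner bases, Jacobian criteria, transcendence-degree arguments) do not apply directly, and one must instead leverage the full multiplicative and Frobenius structure relating the different graded pieces.
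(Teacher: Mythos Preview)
Your setup is correct and matches the paper's: the set $B$ is exactly the family $\cE$ lifting a basis of $\bR_+/\bR_+^2$, and surjectivity is immediate. The difficulty is, as you say, entirely in injectivity, and here your proposal has a genuine gap.

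Your specialization strategy is the only part that attempts the general case, and it is circular. You write ``choose $N$ large enough that the truncations $\pi_N(b_j) \in \bk[x_1,\ldots,x_N]$ are algebraically independent,'' but you give no reason such an $N$ exists. The elements $b_1,\ldots,b_k$ are, by construction, merely $\bk$-linearly independent modulo $\bR_+^2$ --- equivalently, of infinite collective strength. The assertion that their truncations to a finite polynomial ring are eventually algebraically independent is precisely the Ananyan--Hochster principle (elements of high collective strength behave like independent variables), which is Theorem~\ref{thm:reg high strength} in the paper. But the paper deduces that theorem \emph{from} Theorem~\ref{introthm1}, not the other way around; assuming it here would either beg the question or import the full Ananyan--Hochster machinery, which the paper is trying to avoid. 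Your $\mathrm{Sym}^2$ computation handles only the very first case and gives no template for higher degrees, and your closing paragraph acknowledges that you have not actually found a mechanism for the inductive step.

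The paper's proof is entirely different: it never truncates. Instead it proves a general criterion (Theorem~\ref{polycrit}) that a graded $\bk$-algebra is polynomial if it has \emph{enough derivations} (char~$0$) or \emph{enough Hasse derivations} (char~$p$). The idea is that a minimal-degree algebraic relation among the $b_j$ can be differentiated to produce a strictly lower-degree relation, contradicting minimality; in characteristic $p$ one uses Hasse derivatives $\partial^n$ to get past $p$th powers. One then checks that the partial (Hasse) derivatives $\partial/\partial x_i$ extend by continuity to $\bR$ and furnish enough derivations. Perfectness enters exactly where you guessed --- if every $\partial/\partial x_i$ kills $f$, then $f\in \bk\invl x_1^p,x_2^p,\ldots\invr$, and perfectness forces $f\in\bR^p$ --- but the way this is exploited is through the derivation criterion, not through controlling the generator set $B_p$ directly.
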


The set of variables in the polynomial ring is uncountable; hence the phrase ``big polynomial rings'' in the title of the paper. We deduce Theorem~\ref{introthm1} from the following general criterion. For a graded ring\footnote{In this paper, all graded rings are supported in non-negative degrees.} $R$, we write $R_+$ for the ideal of positive degree elements, and we write $R_+^2$ for the square of this ideal.

\begin{theorem} \label{polycrit}
Let $R$ be a graded ring with $R_0=\bk$ a perfect field. Assume:
\begin{itemize}
\item Characteristic zero: $R$ has enough derivations (Definition~\ref{defn:enough1}), that is, for every non-zero $x \in R_+$ there is a derivation $\partial$ of negative degree such that $\partial(x) \ne 0$.
\item Positive characteristic: $R$ has enough Hasse derivations (see Definition~\ref{defn:enough2}).
\end{itemize}
Then $R$ is isomorphic to a polynomial ring.  Precisely, for any set of positive degree homogeneous elements $\{f_i\}_{i \in I}$ whose images in $R_+/R_+^2$ form a $\bk$-basis, the $\bk$-algebra homomorphism $\bk[X_i]_{i \in I} \to R$ taking $X_i$ to $f_i$ is an isomorphism.
\end{theorem}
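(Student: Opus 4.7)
The plan is to prove surjectivity and injectivity of the homomorphism $\phi \colon \bk[X_i]_{i\in I} \to R$ given by $X_i \mapsto f_i$ separately. Surjectivity is immediate from graded Nakayama: since the $f_i$ span $R_+/R_+^2$ over $\bk$, they generate the ideal $R_+$, and since $R_0 = \bk$ they generate $R$ as a $\bk$-algebra.

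For injectivity in the characteristic zero case, I would induct on the \emph{polynomial degree} $n$ of elements of $S := \bk[X_i]$, regrading $S$ so that $\deg X_i = \deg f_i$ and $\phi$ is graded. The base cases $n \le 1$ follow immediately from the basis hypothesis: a nonzero $\bk$-linear combination $\sum c_i X_i$ has image $\sum c_i f_i$ which is nonzero already in $R_+/R_+^2$. Now suppose $P\in \ker\phi$ is homogeneous of minimal polynomial degree $n \ge 2$. For any negative-degree derivation $\partial$ of $R$, the chain rule produces
\[
0 = \partial(\phi(P)) = \sum_i \phi(\partial_i P)\,\partial(f_i),
\]
where each $\partial_i P$ has polynomial degree strictly less than $n$. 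By the minimality of $n$, $\phi(\partial_i P) \ne 0$ whenever $\partial_i P \ne 0$; and in characteristic zero, some $\partial_{i^\ast} P$ is nonzero because $P$ has positive polynomial degree.

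The crux is extracting a contradiction from this relation, and here I would invoke \emph{enough derivations}. The plan is to choose $i^\ast$ so that $\deg f_{i^\ast}$ is minimal among $\{i : \partial_i P \ne 0\}$, then use enough derivations to produce a derivation $\partial$ of degree exactly $-\deg f_{i^\ast}$ with $\partial(f_{i^\ast}) \in \bk^\times$, while arranging (by taking linear combinations or iterating) that $\partial(f_j) = 0$ for the finitely many other $j$ appearing in the relation. For $f_{i^\ast}$ of minimal positive degree in $R$ this comes for free, as any nonzero derivation output on such an element automatically lies in $R_0 = \bk$; in general, this isolation step requires more delicate manipulation. Successfully carrying it out should collapse the relation to $c \cdot \phi(\partial_{i^\ast} P) = 0$ with $c \in \bk^\times$, contradicting $\phi(\partial_{i^\ast} P) \ne 0$.

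For positive characteristic, I would replace ordinary derivations with Hasse derivations $(D^{[k]})_{k\ge 0}$, which obey the divided-power Leibniz rule $D^{[k]}(ab) = \sum_{i+j=k} D^{[i]}(a)\, D^{[j]}(b)$ and so detect $p$-th powers where ordinary derivations vanish identically; perfectness of $\bk$ enters when extracting $p$-th roots of scalar outputs from the divided-power expansion, ensuring that nonzero constants cannot be made to vanish by Frobenius. The main obstacle I anticipate is precisely the isolation step in the inductive contradiction: the hypothesis supplies only \emph{some} detecting derivation for each nonzero element, with no direct control over its degree or its simultaneous action on other $f_j$, so carrying out the ``separation of variables'' cleanly enough to yield a scalar relation is where the real work lies.
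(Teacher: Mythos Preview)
Your surjectivity argument is fine and matches the paper. The injectivity argument, however, has a genuine gap at exactly the point you flag: the isolation step. The hypothesis of enough derivations gives you, for a given nonzero element, \emph{some} negative-degree derivation not annihilating it; it gives no control over the degree of that derivation or its effect on the other $f_j$. There is no mechanism in your setup for producing a $\partial$ with $\partial(f_{i^\ast}) \in \bk^\times$ and $\partial(f_j)=0$ for the remaining $j$, and in general no such $\partial$ need exist. Without isolation, the chain-rule identity $0=\sum_i \phi(\partial_i P)\,\partial(f_i)$ is just a relation in $R$ among elements you do not control, and your induction on polynomial degree does not apply to it: the $\partial(f_i)$ are arbitrary elements of $R$, not images of low-polynomial-degree elements of $S$.

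The paper sidesteps this entirely by organizing the induction differently. Rather than inducting on the polynomial degree of a relation, it inducts on the \emph{graded} degree $d$ of the generators: assuming $\cE_{\le d-1}$ is algebraically independent, it adjoins one new element $x\in\cE_d$ at a time to a set $E$ with $\cE_{\le d-1}\subset E\subset\cE_{\le d}$, letting $A=\bk[E]$. The key structural observation is that \emph{any} negative-degree derivation $\partial$ satisfies $\partial(A)\subset A$ and $\partial(x)\in A$, simply because $\partial$ lowers degree below $d$ and $E$ already contains all generators of degree $<d$. So applying $\partial$ to a minimal-degree relation $\sum a_i x^i=0$ with $a_i\in A$ yields another relation of the same form but lower degree, and one derives a contradiction without ever needing to isolate a variable. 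In positive characteristic the same organization works, with the additional wrinkle that one must track when coefficients are $p$th powers; the paper handles this via an auxiliary statement $\sB_{n,m}$ controlling relations $\sum_{i\le n} a_i x^i \in R^p$ with $\deg a_n\le m$. The moral is that the right induction is on the degree filtration of the generating set, not on the polynomial degree of relations.
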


The proof of Theorem~\ref{polycrit} is elementary. Surjectivity follows immediately by induction on degree: any homogeneous element in $R_+^2$ is, by definition, generated by elements of lower degree. For injectivity, if one had an algebraic relation among some of the $f_i$, then one could apply an appropriate (Hasse) derivation to get a lower degree relation, and eventually reach a contradiction. To then obtain Theorem~\ref{introthm1} from Theorem~\ref{polycrit}, we simply observe that (Hasse) derivatives with respect to the variables $x_i$ extend continuously to $\bR$ and furnish it with enough (Hasse) derivations.

The inverse limit $\bR$ is one way to make sense of a limit of finite polynomial rings. A different way is through the use of ultrapowers, or, more generally, ultraproducts (see \S \ref{subsec:ultra} for background). Let $\bS$ be the graded ultrapower of the standard-graded polynomial ring $\bk[x_1,x_2,\ldots]$. We also prove:

\begin{theorem} \label{introthm2}
Assume $\bk$ is perfect. Then $\bS$ is a polynomial ring.
\end{theorem}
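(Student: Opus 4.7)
The plan is to verify the hypotheses of Theorem~\ref{polycrit} applied to $R = \bS$. By construction, the degree-zero part $\bS_0$ is the ultrapower $\bk^*$ of $\bk$. Since being a perfect field is a first-order property (trivially in characteristic $0$, and given by the scheme $\forall x \, \exists y \, (y^p = x)$ in characteristic $p$), {\L}o\'s's theorem implies $\bk^*$ is perfect. So it remains to produce enough (Hasse) derivations on $\bS$.

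To do this, I would construct (Hasse) derivations on $\bS$ as ultraproducts of (Hasse) partial derivatives on $\bk[x_1,x_2,\ldots]$. Concretely, let $I$ be the index set of the ultrapower. For each sequence $i_\bullet = (i_n)_{n \in I}$ of variable indices, the coordinate-wise formula $[f_n] \mapsto [\partial f_n/\partial x_{i_n}]$ descends to a well-defined map on $\bS$ and, because linearity and the Leibniz rule are first-order statements holding in each coordinate ring, defines a degree $-1$ derivation on $\bS$ via {\L}o\'s. In positive characteristic, the analogous construction applied to sequences $(D^{[k_n]}_{i_n})_{n \in I}$ of Hasse derivatives produces Hasse derivations of negative degree on $\bS$ satisfying Definition~\ref{defn:enough2}.

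To verify the ``enough'' condition, take a nonzero $[f_n] \in \bS_+$. The set $A = \{n : f_n \ne 0\}$ lies in the ultrafilter, and for each $n \in A$ the polynomial $f_n$ is nonzero of positive degree, so some partial derivative (in characteristic $0$) or Hasse derivative of positive order (in characteristic $p$) fails to annihilate $f_n$; pick such $i_n$ (and $k_n$) for $n \in A$ and extend the choice arbitrarily off $A$. The resulting (Hasse) derivation on $\bS$ does not kill $[f_n]$. Theorem~\ref{polycrit} then yields that $\bS$ is a polynomial ring over $\bk^*$.

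The main obstacle is largely bookkeeping rather than conceptual: one must formalize the ultraproduct construction of derivations so that the axioms of a derivation, respectively of a Hasse derivation in the sense of Definitions~\ref{defn:enough1} and~\ref{defn:enough2}, really do transfer by {\L}o\'s. This amounts to expressing the Leibniz-type relations as first-order sentences in a suitable language (graded rings equipped with the partial/Hasse operators, with parameters for the variable indices), and then reading off the structure on $\bS$ from the transfer of these sentences.
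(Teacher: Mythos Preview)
Your characteristic-$0$ argument is essentially the paper's: build a derivation on $\bS$ as the ultraproduct of partial derivatives $\partial/\partial x_{i_n}$, choosing $i_n$ so that $\partial f_n/\partial x_{i_n} \ne 0$ whenever $f_n \ne 0$.

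The positive-characteristic case, however, has a genuine gap. Definition~\ref{defn:enough2} does not ask that every \emph{nonzero} $f$ survive some Hasse derivation; it asks that every $f \notin \bS_0 \cdot \bS^p$ satisfy $\partial^1(f) \ne 0$ for some Hasse derivation $\partial^\bullet$. Your verification starts from a merely nonzero $[f_n]$ and locates, for each $n$, some Hasse component $D^{[k_n]}_{i_n}$ of varying order $k_n$ that does not annihilate $f_n$. But a family $(D^{[k_n]}_{i_n})_n$ with $k_n$ varying does not assemble into a Hasse derivation on $\bS$ (a Hasse derivation is an entire sequence $(\partial^j)_{j \ge 0}$, not a single operator), and even if the $k_n$ stabilize to a fixed $k$ near $\ast$ you would only obtain $\partial^k(f) \ne 0$, not $\partial^1(f) \ne 0$. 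Indeed, when $f$ is a $p$th power every $\partial^1$ kills it, so your argument as written cannot separate that case.

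The fix is to track the hypothesis of Definition~\ref{defn:enough2}. Start with $f = [f_n] \notin \bS^p$ (which equals $\bS_0 \cdot \bS^p$ since $\bS_0$ is perfect), and first argue that the set of $n$ for which $f_n$ is not a $p$th power in $\bk[x_1,x_2,\ldots]$ lies in the ultrafilter. This is where perfectness of $\bk$ is actually used: if $f_n$ were a $p$th power on a neighborhood of $\ast$, the unique $p$th roots would assemble to exhibit $f \in \bS^p$. Once each relevant $f_n$ is not a $p$th power, some variable $x_{i_n}$ occurs in it with exponent not divisible by $p$, so the first Hasse component $D^{[1]}_{i_n} = \partial/\partial x_{i_n}$ does not kill $f_n$. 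The ultraproduct of the full Hasse sequences $(D^{[j]}_{i_n})_{j \ge 0}$ (with $i_n$ fixed per $n$ and $j$ running) is then a Hasse derivation on $\bS$ with $\partial^1(f) \ne 0$. This is exactly the paper's argument.
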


This also follows quickly from Theorem~\ref{polycrit}. The perfectness hypotheses in this section can be relaxed: for instance, Theorems~\ref{introthm1} and~\ref{introthm2} hold if $[\bk:\bk^p]$ is finite, see Remarks~\ref{rmk:relax} and~\ref{rmk:relax2}. In fact, some time after completing this paper, we succeeded in removing these hypotheses entirely; see \cite{imperfect}.

\subsection{Connection to the work of Ananyan--Hochster}

We recall (and slightly extend) the notion of strength from \cite{ananyan-hochster}:

\begin{definition}
Let $R$ be a graded ring with $R_0=\bk$ a field, and let $f$ be a homogeneous element of $R$. The {\bf strength} of $f$ is the minimal integer $k \ge -1$ for which there is a decomposition $f=\sum_{i=1}^{k+1} g_i h_i$ with $g_i$ and $h_i$ homogeneous elements of $R$ of positive degree, or $\infty$ if no such decomposition exists. The {\bf collective strength} of a set of homogeneous elements $\{f_i\}_{i \in I}$ of $R$ is the minimal strength of a non-trivial homogeneous $\bk$-linear combination.
\end{definition}

\begin{example} \label{ex:strength}
\hangindent\leftmargini
\textup{(a)}\hskip\labelsep In $\bk[x_1, \ldots, x_n]$, non-zero elements of degree $1$ have infinite strength, while non-zero elements of degree $>1$ have strength $<n$.
\begin{enumerate}[topsep=0pt]
\setcounter{enumi}{1}
\item In $\bR$, there are a wealth of interesting elements of infinite strength, such as $\sum_{i \ge 1} x_i^d$ (if $d$ is invertible in $\bk$).\footnote{To see this, let $F=\sum_i x_i^d$.  If $F$ decomposes as $F=f_0g_0+ \cdots +f_sg_s$ then the singular locus of $V(F)$ would contain $V(f_0,\dots,f_s,g_0,\dots,g_s)$ and hence would have codimension at most $2s+2$.  However, if $d$ is invertible, then the singular locus of $V(F)$ has infinite codimension.}
\item In any graded ring $R$, the ideal $R_+^2$ is exactly the ideal of finite strength elements. \qedhere
\end{enumerate}
\end{example}

Many of the results of Ananyan--Hochster are instances of the following general principle: elements in a polynomial ring of sufficiently large collective strength behave approximately like independent variables. Theorem~\ref{introthm1} shows that this approximation becomes exact in the limiting ring $\bR$. Indeed, suppose $\{f_i\}_{i \in I}$ are elements of $\bR_+$ that form a basis modulo $\bR_+^2$. Thus no linear combination of the $f_i$ belongs to $\bR_+^2$, i.e., has finite strength (Example~\ref{ex:strength}(c)), and so $\{f_i\}$ has infinite collective strength. The Ananyan--Hochster principle thus suggests that the $\{f_i\}$ should be independent variables, and this is exactly the content of Theorem~\ref{introthm1}. 

\subsection{Stillman's conjecture via ultraproducts} \label{ss:stillmanS}
While ultraproducts may be less familiar to some readers than inverse limits, Theorem~\ref{introthm2} leads to our most efficient proof of Stillman's conjecture.
As in \cite{ananyan-hochster} (see \S \ref{ss:stillman}), both the existence of small subalgebras and Stillman's conjecture can be reduced to the following statement:

\begin{theorem}\label{thm:reg high strength}
Fix integers $d_1, \ldots, d_r$. Then there exists an integer $N$ with the following property. If $\bk$ is a perfect field, and $f_1, \ldots, f_r\in \bk[x_1,\dots,x_n]$ are polynomials of degrees $d_1, \ldots, d_r$ with collective strength at least $N$, then $f_1, \ldots, f_r$ is a regular sequence.
\end{theorem}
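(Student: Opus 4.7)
The plan is to argue by contradiction using the polynomiality of the graded ultrapower $\bS$ from Theorem~\ref{introthm2}. If the theorem fails, then for every $N \ge 1$ one can choose a perfect field $\bk_N$, an integer $n_N$, and polynomials $f_1^{(N)}, \dots, f_r^{(N)} \in \bk_N[x_1,\dots,x_{n_N}]$ of degrees $d_1, \dots, d_r$ whose collective strength is at least $N$ but which do not form a regular sequence. Fix a non-principal ultrafilter $\mathcal U$ on $\mathbb{N}$; viewing the $f_i^{(N)}$ inside $\bk_N[x_1, x_2, \dots]$ and passing to the graded ultraproduct, I obtain homogeneous elements $\bar f_1, \dots, \bar f_r$ of degrees $d_1,\dots,d_r$ in a graded ring $\bS$ of the form to which Theorem~\ref{introthm2} applies, over the perfect ultraproduct field $\bk = \prod \bk_N / \mathcal U$. (If the paper's construction of $\bS$ is literally an ultrapower of a fixed ring, one simply takes all $\bk_N$ equal to a common perfect field and encodes the varying $n_N$ inside $\bk[x_1,x_2,\dots]$ via the evident inclusion.)

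Next I would verify that $\bar f_1, \dots, \bar f_r$ has infinite collective strength. If some nonzero $\bk$-linear combination $\sum c_i \bar f_i$ admitted a decomposition $\sum_{j=1}^{k+1} \bar g_j \bar h_j$ with $\bar g_j, \bar h_j$ homogeneous of positive degree, then all data live in finitely many fixed standard graded pieces, and Łoś's theorem would produce an analogous identity for $f_1^{(N)}, \dots, f_r^{(N)}$ at almost every $N$, contradicting the hypothesis that collective strength is $\ge N \to \infty$. By Example~\ref{ex:strength}(c), infinite collective strength means the $\bar f_i$ are $\bk$-linearly independent modulo $\bS_+^2$. Extending them to a basis of $\bS_+/\bS_+^2$ and invoking the precise form of Theorem~\ref{polycrit}, I realize $\bS$ as a polynomial ring with $\bar f_1, \dots, \bar f_r$ among its variables; in particular they form a regular sequence in $\bS$.

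The final step, which is the main obstacle, is to transfer the regular sequence property back down to almost every $N$. For this I would use the characterization that $\bar f_1, \dots, \bar f_r$ is regular iff, for every internal degree $e$ and every $i$, multiplication by $\bar f_i$ is injective on $(\bS/(\bar f_1,\dots,\bar f_{i-1}))_e$. Each such piece is the ultraproduct of the corresponding (finite-dimensional) graded pieces of $\bk_N[x]/(f_1^{(N)},\dots,f_{i-1}^{(N)})$, and injectivity of a linear map between finite-dimensional ultraproducts transfers via Łoś applied to the vanishing of appropriate determinantal minors. Applied in every internal degree, this forces $f_1^{(N)},\dots,f_r^{(N)}$ to be a regular sequence for almost every $N$, contradicting our choice. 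The delicate technical point here is ensuring that, in each fixed standard degree $e$, the graded piece $(\bS/(\bar f_1,\dots,\bar f_{i-1}))_e$ genuinely coincides with the ultraproduct of the corresponding finite-dimensional quotients; this follows from exactness of ultraproducts on finite-dimensional vector spaces, but it is the place where the ultraproduct formalism and the polynomial structure of $\bS$ from Theorem~\ref{introthm2} must be reconciled carefully.
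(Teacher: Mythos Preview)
Your overall strategy---argue by contradiction, pass to the graded ultraproduct $\bS$, show that $\bar f_1,\dots,\bar f_r$ have infinite collective strength (this is Proposition~\ref{prop:str}), and conclude from the polynomiality of $\bS$ that they form a regular sequence there---matches the paper's proof of Theorem~\ref{thm:reg} and is correct up to the final transfer.

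The genuine gap is in that transfer. For each fixed pair $(i,e)$, {\L}o\'s does give that injectivity of multiplication by $\bar f_i$ on $(\bS/(\bar f_1,\dots,\bar f_{i-1}))_e$ is equivalent to injectivity of the corresponding map for almost all $N$. But regularity in a given factor ring requires this for \emph{every} degree $e$, and a non-principal ultrafilter is not closed under countable intersections: the sets ``good in degree $e$'' need not share a common neighborhood of $\ast$. Equivalently, in the other direction, each counterexample furnishes a homogeneous witness $g^{(N)}$ to non-regularity, but its degree may tend to infinity with $N$, so the $g^{(N)}$ need not assemble into an element of the \emph{graded} ultraproduct. (Also, the graded pieces of $\bk_N[x_1,x_2,\ldots]$ are not finite-dimensional, so the determinantal phrasing does not apply, though {\L}o\'s still handles injectivity directly.) The ``delicate technical point'' you isolate---identifying $(\bS/J)_e$ with the ultraproduct of the factor pieces---is real but routine; it is not the actual obstruction. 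The paper circumvents the problem by replacing the degree-by-degree check with a single numerical invariant: a homogeneous sequence in a polynomial ring over a field is regular if and only if the ideal it generates has codimension $r$ (Corollary~\ref{cor:rs}), and codimension of a finitely generated ideal transfers between $\bS$ and the factor rings near $\ast$ (Theorem~\ref{thm:codim in nhood}, proved by induction on codimension via a change of variables making one generator monic and then contracting). The combination is Corollary~\ref{cor:reg seq ultra}, and the contradiction is immediate; the dimension theory of \S\ref{s:codim} is developed precisely to close the gap your degree-by-degree argument leaves open.
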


Ananyan--Hochster prove this theorem via a multi-tiered induction, where elements of increasingly high strength obtain an array of increasingly nice properties. Our proof using Theorem~\ref{introthm2} is more direct. Here is the idea. Suppose that $f_{1,i}, \ldots, f_{r,i} \in \bk[x_1,x_2,\dots]$, for $i \in \bN$, are polynomials of the given degrees with collective strength tending to infinity. It suffices to show that $f_{1,i}, \ldots, f_{r,i}$ eventually form a regular sequence. For each $j$, the sequence $f_{j,\bullet}$ defines an element $f_j$ in the ultraproduct ring $\bS$. It is easy to see that $f_1, \ldots, f_r$ have infinite collective strength (Proposition~\ref{prop:str}). Thus, by Theorem~\ref{introthm2}, $f_1, \ldots, f_r$ are independent variables in $\bS$, and hence form a regular sequence. We then apply a result (Corollary~\ref{cor:reg seq ultra}) comparing codimension in $\bS$ to codimension in $\bk[x_1,x_2,\ldots]$ to conclude that $f_{1,i}, \ldots, f_{r,i}$ is eventually a regular sequence.

As in \cite{ananyan-hochster}, we show that the bound in Theorem~\ref{thm:reg high strength} (and Stillman's conjecture as well) is independent of the field $\bk$. To do so, we prove a generalization of Theorem~\ref{introthm2} (see \S\ref{ss:ultramain}) where $\bS$ is replaced with an ultraproduct of polynomial rings with variable coefficient fields.

\subsection{Stillman's conjecture via inverse limits} \label{ss:stillmanR}
Returning to the inverse limit, Theorem~\ref{introthm1} enables a proof of Stillman's conjecture that follows the very general rubric in algebraic geometry of proving a result generically, spreading out to an open set, and then inductively treating proper closed subsets.  The basic idea in characteristic zero is as follows. Suppose that $A$ is a characteristic~0 domain with fraction field $K$, and $M$ is a finitely presented $A \vars$-module. Then $K \otimes_A M$ is a finitely presented module over the ring $K \otimes_A A\vars$.  While $K\otimes_A A\vars$ is generally not isomorphic to $K\vars$, Theorem~\ref{polycrit} shows it is also an abstract polynomial ring.

It then follows from simple homological properties of infinite polynomial rings that $K \otimes_A M$ has a finite length resolution by finite free modules. A flatness argument produces an open dense subset $U$ of $\Spec(A)$ such that $M_y$ has the same Betti table as $K \otimes_A M$ for all $y \in U$. We can then restrict our attention to $\Spec(A) \setminus U$, and apply the same argument. This shows that there is some (perhaps infinite) stratification of $\Spec(A)$ such that on each stratum the fibers of $M$ have the same Betti table.

We apply this as follows. Fix positive integers $d_1, \ldots, d_r$, and let $A$ be the symmetric algebra on the vector space $\Sym^{d_1}(\bk^{\infty}) \oplus \cdots \oplus \Sym^{d_r}(\bk^{\infty})$. Then $\Spec(A)$ is the space of forms $f_1, \ldots, f_r \in \bk\vars$ of degrees $d_1, \ldots, d_r$. We let $M$ be the universal module $A\vars/(f_1,\ldots,f_r)$.  The stratification constructed in the previous paragraph can be made compatible with the $\GL_{\infty}$ action on $\Spec(A)$. A recent theorem of Draisma \cite{draisma} asserts that $\Spec(A)$ is $\GL_{\infty}$-noetherian, and hence this stratification is finite. We conclude that there are only finitely many resolution types for ideals generated by $f_1, \ldots, f_r$ of the given degrees. This, in particular, implies Stillman's conjecture in characteristic zero.

The same idea works in positive characteristic, but when $K$ fails to be perfect, we need to bootstrap from the perfect case to produce the open subset with constant Betti numbers.

\subsection{Connections to other work}
The Milnor--Moore theorem \cite{MM}, and generalizations \cite{sjodin}, establish that certain commutative graded rings are polynomial rings via properties of a comultiplication. While this, and its extensions to non-commutative rings, can be applied to examples in commutative algebra, it is of a fairly distinct nature from the criteria in the present paper.

Theorem~\ref{introthm1} is an example of the meta-principle that inverse limits of free objects tend to be free themselves. See \cite[\S I.4.2, Corollary 4]{serre} for an example of this principle with pro-$p$-groups. Alexandru Chirvasitu informed us that he can prove a non-commutative version of Theorem~\ref{introthm1} where polynomial rings are replaced by non-commutative polynomial rings.

The use of ultraproducts in commutative algebra was famously employed in \cite{van-den-Dries-schmidt} to establish a variety of bounds (with the number of variables fixed). See \cite{schoutens} for more discussion and examples.

The Gr\"obner theory of the inverse limit ring $\bk\vars$ was studied by Snellman in~\cite{snellman,snellman-article}. Shortly after a draft of this article was posted, ~\cite{draisma-lason-leykin} applied Theorem~\ref{introthm1} to obtain finiteness results for grevlex Gr\"obner bases over $\bR$, and then used this to answer some questions raised by Snellman and to give a generic initial ideal proof of Stillman's Conjecture.

The use of $\GL_\infty$-noetherianity of spaces to prove the existence of uniform bounds in algebraic geometry has been used in several papers.  See \cite{draisma-survey} for a survey.

\subsection{Outline}

In \S \ref{s:poly}, we establish our polynomiality criteria (summarized in Theorem~\ref{polycrit}). In \S \ref{s:codim}, we prove some easy results concerning dimension theory in polynomial rings with an infinite number of variables. In \S \ref{s:ultra}, we prove that the ultraproduct ring is a polynomial ring (Theorem~\ref{introthm2}), and use this to deduce our first proof of Stillman's conjecture. Finally, in \S \ref{s:limit}, we prove that the inverse limit ring is a polynomial ring (Theorem~\ref{introthm1}), and use this to deduce our second proof of Stillman's conjecture.

\subsection*{Acknowledgements}
We thank Craig Huneke and Gregory G. Smith for useful conversations. We also thank Alexandru Chirvasitu for informing us about his work on the non-commutative analogue of Theorem~\ref{introthm1} and the reference in \cite{serre}.

\section{Criteria for polynomiality} \label{s:poly}

Let $R$ be a graded ring with $R_0=\bk$ a field. We say that $R$ {\bf is a polynomial ring} if there are elements $\{x_i\}_{i \in I}$ of $R$, each homogeneous of positive degree, such that the natural map $\bk[X_i]\to R$ sending $X_i$ to $x_i$ is an isomorphism. The $x_i$'s need not have degree~1, and the set $I$ need not be finite.  The purpose of this section is to characterize polynomial rings via derivations.

\subsection{Characteristic~0}

We first treat the case where $\bk$ has characteristic~0, for which the following definition and theorem constitute our criterion for polynomiality. We say that a derivation $\partial$ of a graded ring $R$ is {\bf homogeneous of degree $d$} if $\deg \partial(x)=\deg(x)+d$ for all homogeneous $x \in R$.

\begin{definition} \label{defn:enough1}
Let $R$ be a graded ring with $R_0=\bk$ a field. We say that $R$ {\bf has enough derivations} if for every non-zero homogeneous element $x$ of positive degree there is a homogeneous derivation $\partial$ of negative degree such that $\partial(x) \ne 0$.
\end{definition}

\begin{theorem} \label{thm:poly}
Let $R$ be a graded $\bk$-algebra with $R_0=\bk$ a field of characteristic~$0$. Then $R$ is a polynomial ring if and only if $R$ has enough derivations.
\end{theorem}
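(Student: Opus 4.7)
The plan is as follows. The forward direction is immediate: if $R = \bk[X_i]_{i \in I}$ with $\deg X_i = d_i > 0$, each partial $\partial/\partial X_i$ is a homogeneous derivation of negative degree $-d_i$, and any nonzero homogeneous polynomial of positive weighted degree has at least one nonzero partial because $\chr \bk = 0$. For the converse, assume $R$ has enough derivations and, following the stronger formulation of Theorem~\ref{polycrit}, fix a set $\{f_i\}_{i \in I}$ of homogeneous positive-degree elements whose classes form a $\bk$-basis of $R_+/R_+^2$. I will show that the $\bk$-algebra map $\varphi \colon \bk[X_i] \to R$ sending $X_i \mapsto f_i$ (with $X_i$ in weighted degree $d_i := \deg f_i$) is a graded isomorphism. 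Surjectivity is a graded Nakayama argument, degree by degree: for $g \in R_n$ its class in $R_+/R_+^2$ is a $\bk$-linear combination $\sum c_i \bar f_i$, and $g - \sum c_i f_i \in R_+^2 \cap R_n$ is a sum of products of elements of strictly smaller positive degree, reached by induction on $n$.

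Injectivity is the heart of the argument. I proceed by strong induction on weighted degree $n$, assuming $\varphi$ is an isomorphism in all weighted degrees below $n$. Given $0 \ne P \in \bk[X_i]_n$ with $\varphi(P) = 0$, the key construction is to lift every negative-degree derivation $\partial$ of $R$ (say of degree $-e$) to a derivation $\tilde\partial$ of $\bk[X_i]$ of weighted degree $-e$. For each variable $X_i$ appearing in $P$, one has $d_i \le n$, so $\partial(f_i) \in R_{d_i - e}$ lies in weighted degree strictly below $n$, where $\varphi$ is already an isomorphism; define $\tilde\partial(X_i)$ to be the unique preimage. (On other variables $\tilde\partial$ can be defined arbitrarily.) Then by the chain rule $\varphi(\tilde\partial(P)) = \partial(\varphi(P)) = 0$, and since $\tilde\partial(P) \in \bk[X_i]_{n-e}$ has weighted degree $n - e < n$, the inductive hypothesis forces $\tilde\partial(P) = 0$ in $\bk[X_i]$. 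Unpacking this, for every negative-degree derivation $\partial$ of $R$ the identity
\[
\sum_i \frac{\partial P}{\partial X_i} \cdot \widetilde{\partial f_i} = 0
\]
holds in the polynomial ring $\bk[X_i]$.

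To close, I take $P$ of minimal weighted degree and, among such relations, involving the fewest variables; in characteristic zero this forces every partial $\partial P/\partial X_j \ne 0$ for $X_j$ appearing in $P$. One case is clean: if some $d_j = n$, then $P = cX_j + Q$ with $c \in \bk^*$ and $Q \in \bk[X_i]_n$ not involving $X_j$, so $f_j = -c^{-1}\varphi(Q)$, and comparing classes in $R_+/R_+^2$ immediately contradicts linear independence of $\{\bar f_i\}$. In the remaining case where $d_j < n$ for every $X_j$ appearing in $P$, the tuple $(\widetilde{\partial f_i})$ is constrained to lie in the syzygy module of $\{\partial P/\partial X_j\}$ inside the polynomial ring $\bk[X_i]$, which is a domain; varying $\partial$ via the enough-derivations hypothesis is intended to produce a nonzero positive-degree element of $R$ annihilated by every negative-degree derivation, contradicting the hypothesis. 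This last step is the main obstacle: translating the rigidity imposed by the polynomial identity into a concrete element killed by all derivations requires combining the syzygy structure with the freedom afforded by enough derivations in a careful way.
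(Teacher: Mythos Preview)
Your proposal has a genuine gap, and you flag it yourself: the final step in the case where every $d_j < n$ is not carried out. Knowing only that the tuple $(\widetilde{\partial f_i})_i$ lies in the syzygy module of $(\partial P/\partial X_i)_i$ for every $\partial$ does not, without further work, produce a concrete nonzero positive-degree element of $R$ killed by all derivations. Multivariate syzygy modules can be complicated, and nothing in your setup forces the lifts $\widetilde{\partial f_i}$ to land in the subring generated by the variables actually appearing in $P$, so the ``fewest variables'' minimality does not obviously help.

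The paper's proof avoids this entirely by a different inductive scheme. Rather than inducting on the degree of the relation, it inducts on the maximal degree $d$ of the generators involved and adjoins one element at a time: one assumes a set $E$ with $\cE_{\le d-1} \subset E \subset \cE_{\le d}$ is algebraically independent, lets $A$ be the subalgebra it generates, and adjoins a single $x \in \cE_d \setminus E$. The crucial observation is that any negative-degree derivation $\partial$ satisfies $\partial(A) \subset A$ and $\partial(x) \in A$, because $\partial$ sends $\cE_{\le d}$ into degrees $<d$, which already lie in $A$. A minimal relation is then univariate, $\sum_{i=0}^n a_i x^i = 0$ with $a_i \in A$; one applies a $\partial$ with $\partial(a_n) \ne 0$ to force $a_n \in \bk$, and then (since $n \ge 2$ and $nx + a_{n-1} \ne 0$) a $\partial$ with $\partial(nx + a_{n-1}) \ne 0$ to produce a smaller relation. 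The reduction to a single variable over a domain $A$ closed under derivations is exactly what makes the endgame clean, and is what your argument is missing.
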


\begin{proof}
In this proof, ``derivation'' will mean ``homogeneous derivation of negative degree.'' It is clear that a polynomial ring has enough derivations. We prove the converse.

Let $\cE$ be a set of homogeneous elements of $R_+$ that gives a basis of $R_+/R_+^2$. By graded Nakayama's lemma, $\cE$ generates $R$ as a $\bk$-algebra, so it suffices to show that $\cE$ is algebraically independent. Let $\cE_{\le d}$ (resp.\ $\cE_d$) be the set of elements in $\cE$ of degree $\le d$ (resp.\ $d$). We prove that $\cE_{\le d}$ is algebraically independent for all $d$ by induction on $d$. Suppose that we have shown $\cE_{\le d-1}$ is algebraically independent. To prove that $\cE_{\le d}$ is algebraically independent, it suffices to prove the following statement: if $\cE_{\le d-1} \subset E \subset \cE_{\le d}$ is algebraically independent and $x \in \cE_d \setminus E$, then $E'=E \cup \{x\}$ is algebraically independent. Indeed, this statement implies that all sets of the form $\cE_{\le d-1} \cup E''$ with $E''$ a finite subset of $\cE_d$ are algebraically independent, which implies that $\cE_{\le d}$ is algebraically independent.

Thus let $E$, $E'$, and $x$ as above be given. Let $A \subset R$ be the $\bk$-subalgebra generated by $E$. To prove that $E'$ is algebraically independent, it suffices to show that if $0=\sum_{i=0}^n a_i x^i$ with $a_i \in A$ then $a_i=0$ for all $i$. Before proceeding, we note that if $\partial$ is any derivation of $R$ then $\partial(\cE_{\le d}) \subset A$ since $\partial$ decreases degrees, and so $\partial(A) \subset A$ and $\partial(x) \in A$.

Suppose that $0=\sum_{i=0}^n a_i x^i$ with $a_i \in A$ and $a_n \ne 0$. Of all such relations, choose a homogeneous one of minimal degree (i.e., with $\deg(a_nx^n)$ minimal). Suppose that $a_n$ has positive degree. By assumption, there exists a derivation $\partial$ such that $\partial(a_n) \ne 0$. Applying $\partial$ to our given relation yields $0=\partial(a_n) x^n + \sum_{i=0}^{n-1} b_i x^i$ where the $b_i$ are elements of $A$. This is a contradiction, since $\partial(a_n)$ has smaller degree than $a_n$. Thus $\deg(a_n)=0$, and so we may assume $a_n=1$.

Since $\cE$ is linearly independent modulo $R_+^2$, we see that $x\notin A$, and so $n \ge 2$ and $nx+a_{n-1}$ is non-zero.  It follows that there exists a derivation $\partial$ such that $\partial(nx+a_{n-1}) \ne 0$. Applying $\partial$ to our original relation gives $0=\partial(nx+a_{n-1}) x^{n-1} + \sum_{i=0}^{n-2} b_i x^i$ for some $b_i \in A$. This is a smaller degree relation, which is a contradiction. We thus see that no relation $0=\sum_{i=0}^n a_i x^i$ exists with $a_n$ non-zero, which completes the proof.
\end{proof}

\subsection{Positive characteristic}

Theorem~\ref{thm:poly} obviously fails in characteristic $p$: since $p$th powers are killed by every derivation, no reduced ring has enough derivations. The most obvious adjustment would be to ask that if $x$ is a homogeneous element of $R$ that is not a $p$th power then there is a derivation $\partial$ such that $\partial(x) \ne 0$. The following two examples show that this condition is insufficient to conclude that $R$ is a polynomial ring.

\begin{example}
Let $R=\bk[x]/(x^p)$ where $\bk$ is perfect of characteristic~$p$ and $x$ has degree~1. Then $\frac{d}{dx}$ is a well-defined derivation on $R$, and thus $R$ has enough derivations.
\end{example}

\begin{example}
Let $R=\bk[x,y,\tfrac{y}{x^p}]$ where $\bk$ is perfect of characteristic $p$, $x$ has degree~1, and $y$ has degree $p+1$. Then $\frac{\partial}{\partial x}$ and $x^p \frac{\partial}{\partial y}$ are well-defined derivations on $R$, and every homogeneous element of $R$ that is not a $p$th power is not annihilated by one of them.
\end{example}

To extend our criterion to the positive characteristic case, we employ the following extension of the notion of a derivation (see \cite[pp.\ 27--29]{goldschmidt} for additional discussion).

\begin{definition}
Let $R$ be a $\bk$-algebra. A {\bf Hasse derivation} on $R$ is a sequence $\partial^{\bullet}=(\partial^n)_{n \ge 0}$ where each $\partial^n$ is a $\bk$-linear endomorphism of $R$ such that $\partial^0$ is the identity and
\begin{displaymath}
\partial^n(xy)=\sum_{i+j=n} \partial^i(x) \partial^j(y)
\end{displaymath}
holds for all $x,y \in R$. If $R$ is graded then we say $\partial^{\bullet}$ is {\bf homogeneous of degree $d$} if $\partial^n(x)$ has degree $\deg(x)+nd$ for all homogeneous $x \in R$.
\end{definition}

\begin{remark}
Giving a Hasse derivation on $R$ is equivalent to giving a ring homomorphism $\varphi \colon R \to R \lbb t \rbb$ such that the constant term of $\varphi(x)$ is $x$. If $\partial^{\bullet}$ is a Hasse derivation, then the associated ring homomorphism is defined by $\varphi(x) = \sum_{i \ge 0} \partial^i(x) t^i$.
\end{remark}

\begin{example} \label{ex:hasse}
Suppose $R=\bk[x]$, with $\bk$ any field. Define $\partial^n(x^k) = \binom{k}{n} x^{k-n}$. (Note that $\partial^n = \frac{1}{n!} \frac{d^n}{dx^n}$ if $n!$ is invertible in $\bk$.)  Then $\partial^{\bullet}$ is a Hasse derivation, called the {\bf Hasse derivative}. If $R$ is graded with $x$ of degree $d$ then $\partial^{\bullet}$ is homogeneous of degree $-d$. The homomorphism $\varphi \colon R \to R\lbb t \rbb$ associated to the Hasse derivative is given by $x \mapsto x+t$.
\end{example}

\begin{remark}\label{rmk:draisma hasse}
Curiously, Hasse derivatives also play a key role in Draisma's~\cite{draisma}, where they are closely related to his directional derivatives.
\end{remark}

\begin{lemma} \label{lem:der-powers}
Let $R$ be a $\bk$-algebra, where $\bk$ is a field of characteristic~$p$, and let $\partial^{\bullet}$ be a Hasse derivation on $R$. Let $q$ be a power of $p$. Then for $x \in R$ and $n \in \bN$ we have
\begin{displaymath}
\partial^n(x^q) = \begin{cases}
(\partial^{n/q}{x})^q & \text{if $q \mid n$} \\
0 & \text{if $q \nmid n$} \end{cases}.
\end{displaymath}
\end{lemma}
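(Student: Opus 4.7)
The cleanest path is via the ring homomorphism $\varphi \colon R \to R\lbb t \rbb$, $\varphi(x) = \sum_{i \ge 0} \partial^i(x) t^i$, recorded in the Remark just before the lemma. The plan is to compute $\varphi(x^q)$ in two different ways and read off $\partial^n(x^q)$ as the coefficient of $t^n$.

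On the one hand, by definition of $\varphi$, we have $\varphi(x^q) = \sum_{n \ge 0} \partial^n(x^q)\, t^n$. On the other hand, since $\varphi$ is a ring homomorphism,
\[
\varphi(x^q) = \varphi(x)^q = \Bigl(\sum_{i \ge 0} \partial^i(x)\, t^i\Bigr)^{\!q}.
\]
Because $\bk$ has characteristic $p$ and $q$ is a power of $p$, the Frobenius $a \mapsto a^q$ is a ring endomorphism of $R \lbb t \rbb$, so the $q$-th power of the sum equals the sum of $q$-th powers:
\[
\varphi(x)^q = \sum_{i \ge 0} (\partial^i(x))^q\, t^{iq}.
\]
Comparing coefficients of $t^n$ on the two sides yields exactly the claim: if $q \mid n$, only the $i = n/q$ term contributes, giving $(\partial^{n/q}(x))^q$; if $q \nmid n$, no term contributes and the coefficient is $0$.

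The only step that requires any care is verifying that the Frobenius truly commutes with the infinite sum in $R\lbb t \rbb$; this is immediate from the standard fact that $(a+b)^q = a^q + b^q$ in characteristic $p$ applied inductively to the finitely many terms contributing to each coefficient of $t^n$ in $\varphi(x)^q$, or equivalently from the fact that the $q$-th power map on $R\lbb t \rbb$ is a ring homomorphism. No genuine obstacle arises; the key conceptual point is simply that passing to the auxiliary ring $R\lbb t \rbb$ converts the combinatorial Leibniz-type identity for $\partial^n(x^q)$ into a clean Frobenius statement, bypassing any induction on $q$ or direct manipulation of multinomial coefficients.
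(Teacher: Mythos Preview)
Your proof is correct and takes a somewhat different route from the paper's. The paper expands $\partial^n(x^q)$ directly via the generalized Leibniz rule as $\sum_{i_1+\cdots+i_q=n} \partial^{i_1}(x)\cdots\partial^{i_q}(x)$ and then argues combinatorially: any tuple $(i_1,\ldots,i_q)$ with not all entries equal has $S_q$-orbit of size divisible by $p$, so those terms cancel in characteristic $p$, leaving only the diagonal term when $q\mid n$. You instead package everything into the homomorphism $\varphi\colon R\to R\lbb t\rbb$ from the preceding Remark and invoke Frobenius on $R\lbb t\rbb$ to get $\varphi(x)^q=\sum_i (\partial^i x)^q t^{iq}$ directly. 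Your approach is arguably cleaner, since it replaces the orbit-counting step with the single observation that the $q$th-power map is a ring endomorphism in characteristic $p$; the paper's approach is more self-contained in that it does not rely on the generating-function reformulation. The two are of course closely related under the hood---your Frobenius identity on power series, unwound coefficient by coefficient, \emph{is} the paper's orbit cancellation---but the packaging is genuinely different.
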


\begin{proof}
We have
\begin{displaymath}
\partial^n(x^q) = \sum_{\substack{(i_1,\dots,i_q)\\i_1+\cdots+i_q=n}} \partial^{i_1}(x) \cdots \partial^{i_q}(x).
\end{displaymath}
If $i_1, \ldots, i_q$ are not all equal then the orbit of $(i_1, \ldots, i_q)$ under the symmetric group $S_q$ has cardinality divisible by $p$. All elements of this orbit contribute equally to the sum, and thus they all cancel. We thus see that the only surviving term occurs when $n$ is a multiple of $q$ and $i_1=\cdots=i_q=n/q$; this term is $(\partial^{n/q}{x})^q$.
\end{proof}

The following definition and theorem constitute our criterion for polynomiality in positive characteristic.

\begin{definition} \label{defn:enough2}
Let $R$ be a graded ring with $R_0=\bk$ a field of characteristic~$p>0$. Let $R^p = \{f^p \mid f \in R\}$ denote the subring of $p$th powers in $R$. We say that $R$ {\bf has enough Hasse derivations} if the following condition holds: if $x$ is a positive degree homogeneous element of $R$ such that $x \not\in \bk R^p$ (the $\bk$-span of the set $R^p$) then there exists a homogeneous Hasse derivation $\partial^{\bullet}$ of $R$ of negative degree such that $\partial^1(x) \ne 0$.
\end{definition}

\begin{theorem} \label{thm:poly2}
Let $R$ be a graded ring with $R_0=\bk$ a perfect field of characteristic~$p>0$. Then $R$ is a polynomial ring if and only if it has enough Hasse derivations.
\end{theorem}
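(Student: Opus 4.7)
\emph{Forward direction.} On $R = \bk[X_i]_{i\in I}$, Example~\ref{ex:hasse} gives for each $i$ a homogeneous Hasse derivation $\partial_{X_i}^\bullet$ of negative degree; since $\bk R^p=\bk[X_i^p]$, any element outside has some monomial with an $X_i$ to a power not divisible by $p$, which is detected by $\partial_{X_i}^1$.

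\emph{Backward direction.} The plan is to mirror the proof of Theorem~\ref{thm:poly}, using Hasse derivations in place of ordinary derivations and handling $p$-th powers carefully. Pick $\cE\subset R_+$ homogeneous whose image forms a $\bk$-basis of $R_+/R_+^2$; by Nakayama $\cE$ generates $R$, so it suffices to prove $\cE$ is algebraically independent. Induct on $d$: with $\cE_{\le d-1}$ algebraically independent, reduce (by the same argument as in Theorem~\ref{thm:poly}) to showing that if $\cE_{\le d-1}\subseteq E\subsetneq \cE_{\le d}$ is algebraically independent and $x\in\cE_d\setminus E$, then $E\cup\{x\}$ is too. Set $A=\bk[E]$, and suppose for contradiction that a relation $0=\sum_{i=0}^n a_i x^i$ with $a_n\neq 0$ of minimal $\deg(a_n x^n)$ exists. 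The key observation is that any Hasse derivation $\partial^\bullet$ of negative degree gives a ring homomorphism $\varphi=\sum\partial^j t^j\colon R\to R[[t]]$ sending $A$ into $A[[t]]$ and $x$ into $x+tA[[t]]$, since for $j\ge 1$ and $e\in E\cup\{x\}$, $\partial^j(e)$ has degree below $d$ and so lies in $R_{<d}=\bk[\cE_{<d}]\subseteq A$. Extracting the $t^m$-coefficient of $\varphi(\sum a_i x^i)=0$ yields a polynomial identity in $A[x]$ of strictly smaller total degree whose $x^n$-coefficient is $\partial^m(a_n)$; nontriviality of such a derived relation contradicts minimality, so the task reduces to finding a Hasse derivation $\partial$ and some $m\ge 1$ with $\partial^m(a_n)\neq 0$.

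If $a_n\notin R^p$ (equivalently $\notin\bk R^p$, by perfectness), take $m=1$ and invoke the hypothesis. In the subtler case $a_n\in R^p$, perfectness lets us iteratively extract $p$-th roots: $a_n=r_0^p$, and if $r_0\in R^p$ then $r_0=r_1^p$, and so on, with $\deg r_k=\deg a_n/p^{k+1}$; since $\deg a_n>0$, the iteration must terminate at some $r_k\notin R^p$ with $\deg r_k\ge 1$. Picking $\partial$ with $\partial^1(r_k)\neq 0$, iterated application of Lemma~\ref{lem:der-powers} gives $\partial^{p^{k+1}}(a_n)=(\partial^1 r_k)^{p^{k+1}}$, which lies in $A$ (since $\partial^{p^{k+1}}$ preserves $A$) and is nonzero by reducedness of $A$ (a polynomial ring by the inductive hypothesis). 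Once $a_n\in\bk$, rescale to $a_n=1$ and note $n\ge 2$ (as $x\notin A$). When $p\nmid n$, $nx+a_{n-1}$ has nonzero image in $R_+/R_+^2$ (its $x$-coefficient is $n\neq 0$, and $x\notin E$), hence lies outside $R_+^2\supseteq R^p$; some $\partial^1$ detects it, and the resulting derived relation has nonzero $x^{n-1}$-coefficient $\partial^1(nx+a_{n-1})$, contradicting minimality. When $p\mid n$, picking $\partial$ with $\partial^1(x)\neq 0$ (possible as $x\notin R^p$) and applying $\partial^1$ makes the $x^n$-coefficient $\partial^1(1)=0$, strictly lowering the $x$-degree, so we iterate.

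\emph{Main obstacle.} The principal difficulty is the sub-case $a_n\in R^p$ with $\deg a_n>0$, where $\partial^1$ annihilates $a_n$; it is resolved by combining perfectness of $\bk$ (to extract $p$-th roots), Lemma~\ref{lem:der-powers} (to compute how Hasse derivations interact with $p$-th powers at the higher levels $\partial^{p^j}$), and reducedness of the inductively polynomial subring $A$ (to conclude $(\partial^1 r_k)^{p^{k+1}}\neq 0$ from $\partial^1(r_k)\neq 0$).
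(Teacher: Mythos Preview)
Your argument is in the right spirit but has two genuine gaps.

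\textbf{Reducedness of $R$, not $A$.} When $\deg a_n>0$ and $a_n\in R^p$, you write $a_n=r_k^{p^{k+1}}$ with $r_k\notin R^p$, pick $\partial$ with $\partial^1(r_k)\ne 0$, and use Lemma~\ref{lem:der-powers} to get $\partial^{p^{k+1}}(a_n)=(\partial^1 r_k)^{p^{k+1}}$. You then claim this is nonzero ``by reducedness of $A$''. But $\partial^1(r_k)$ lies in $R$, not in $A$ (there is no reason $r_k$ or its image under $\partial^1$ has degree $<d$), so what you need is that $R$ is reduced. The paper proves this first, via a short argument: a minimal-degree nonzero nilpotent $y$ cannot lie in $R^p$, so some $\partial^1(y)\ne 0$, and then $\partial^{p^r}(y^{p^r})=(\partial^1 y)^{p^r}$ produces a lower-degree nilpotent. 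You should insert this step.

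\textbf{The case $a_n=1$, $p\mid n$.} Here you apply $\partial^1$ and say the $x^n$-coefficient becomes $0$, ``strictly lowering the $x$-degree, so we iterate''. But the derived relation $\sum b_ix^i=0$ has \emph{total} degree strictly less than $\deg(a_nx^n)=nd$, so if any $b_j\ne 0$ you already contradict minimality. Hence minimality forces every $b_j=0$ in $A$, and there is nothing nontrivial left to iterate on. What you actually obtain (letting $\partial$ vary) is a system of constraints: $\partial^1(a_{n-1})=0$ for all $\partial$ gives $a_{n-1}\in R^p$, and the vanishing of the $x^{n-2}$-coefficient gives $(n{-}1)a_{n-1}x+a_{n-2}\in R^p$. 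Deducing $(n{-}1)a_{n-1}=0$ from a statement of the form ``$cx+d\in R^p$ with $c,d\in A$'' is itself nontrivial and requires a further induction on $\deg c$ using higher $\partial^q$'s. This is precisely why the paper abandons the direct minimal-counterexample template and instead proves the auxiliary statements $\sB_{n,m}$ (``$\sum_{i\le n}a_ix^i\in R^p$ with $\deg a_n\le m$ forces $a_i\in R^p$ and $ia_i=0$'') by a double induction on $(n,m)$, then derives algebraic independence from $\sB$. Your sketch does not supply this missing layer.
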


\begin{proof}
In this proof, ``Hasse derivation'' will mean ``homogeneous Hasse derivation of negative degree.'' We note that since $\bk$ is perfect, $\bk R^p=R^p$. If $R$ is a polynomial ring then it has enough Hasse derivations; one can see this using Hasse derivatives (Example~\ref{ex:hasse}). We now prove the converse.

We first show that $R$ is reduced. Suppose not, and let $x \in R$ be a non-zero homogeneous nilpotent element of minimal degree. Note that $x \not\in R^p$, for if $x=y^p$ then $y$ would be a lower degree nilpotent element. Let $r$ be such that $x^{p^r}=0$ and let $\partial^{\bullet}$ be a Hasse derivation such that $\partial^1(x) \ne 0$. Then $0=\partial^{p^r}(x^{p^r})=(\partial^1{x})^{p^r}$ (Lemma~\ref{lem:der-powers}), and so $\partial^1(x)$ is nilpotent, contradicting the minimality of $x$. Thus $R$ is reduced.

Let $\cE$ be a set of homogeneous elements of $R_+$ that forms a basis for $R_+/R_+^2$. It suffices to prove that $\cE$ is algebraically independent. For $E \subset \cE$, consider the following statement:
\begin{itemize}[leftmargin=3.5em]
\item[$\sA_E$:] Given distinct elements $x_1, \ldots ,x_r \in E$ and a polynomial $F \in \bk[X_1, \ldots, X_r]$ such that $F(x_1, \ldots, x_r) \in R^p$, we have $F \in \bk[X_1, \ldots, X_r]^p$.
\end{itemize}
Observe that if $\sA_E$ holds then $E$ is algebraically independent. Indeed, suppose that $F(x_1, \ldots, x_r)=0$ is a minimal degree algebraic relation among distinct elements of $E$. Since $0 \in R^p$, we see that $F(X_1, \ldots, X_r)=G(X_1, \ldots, X_r)^p$ for some $G$ by $\sA_E$, and so $G(x_1, \ldots, x_r)^p=0$. Since $R$ is reduced, it follows that $G(x_1, \ldots, x_r)=0$, contradicting the minimality of $F$. Thus to prove the theorem it suffices to prove $\sA_{\cE}$.

We prove that $\sA_E$ holds for all $E$ by induction on $E$ in the following manner. Let $\cE_{\le d}$ (resp.\ $\cE_d$) be the set of elements of $\cE$ of degree $\le d$ (resp.\ $d$). Suppose that $\cE_{\le d-1} \subset E \subset \cE_{\le d}$ and let $E'=E \cup \{x\}$ for some $x \in \cE_d \setminus E$. Assuming $\sA_E$, we prove $\sA_{E'}$. This will establish $\sA_E$ for all $E$ by the same logic used in the proof of Theorem~\ref{thm:poly}.

Fix $E$, $E'$, and $x$ as above. Let $A$ be the $\bk$-subalgebra of $R$ generated by $E$. We claim that $\sA_{E'}$ can be reduced to the following statement, for all $n$ and $m$:
\begin{itemize}[leftmargin=4.3em]
\item[$\sB_{n,m}$:] If $\sum_{i=0}^n a_i x^i \in R^p$ with $a_i \in A$ and $\deg(a_n) \le m$, then $a_i \in R^p$ and $ia_i=0$ for all $i$.
\end{itemize}
Indeed, suppose $\sB_{n,m}$ holds for all $n$ and $m$, and suppose $F(x_1,\ldots,x_r) \in R^p$ for distinct elements $x_1, \ldots, x_r \in E'$. We may as well suppose $x_r=x$ and $x_1,\ldots,x_{r-1} \in E$. Write $F(X_1,\ldots,X_r)=\sum_{i=0}^n G_i(X_1,\ldots,X_{r-1}) X_r^i$ for polynomials $G_i$. By $\sB_{n,m}$, we see that $G_i(x_1,\ldots,x_{r-1}) \in R^p$ for all $i$ and $G_i(x_1,\ldots,x_{r-1})=0$ if $p \nmid i$. By $\sA_E$, it follows that $G_i(X_1,\ldots,X_{r-1})=G'_i(X_1,\ldots,X_{r-1})^p$ for some polynomial $G'_i$ and that $G_i(X_1,\ldots,X_r)=0$ if $p \nmid i$. We thus find
\begin{displaymath}
F(X_1,\ldots,X_r)=\bigg( \sum_{\substack{0 \le i \le n\\ p \mid i}} G_{i}'(X_1,\ldots,X_{r-1}) X_r^{i/p} \bigg)^p,
\end{displaymath}
which establishes $\sA_{E'}$.

We now prove $\sB_{n,m}$ by induction on $n$ and $m$. Clearly, $\sB_{0,m}$ holds for all $m$. We note that if $\sB_{n,m}$ holds and $\sum_{i=0}^n a_i x^i=0$ with $\deg(a_n) \le m$ then $a_i=0$ for all $i$; the proof is the same as the proof given above that $\sA_E$ implies algebraic independence of $E$. We also note that if $\partial^{\bullet}$ is any Hasse derivation then $\partial^n(\cE_{\le d}) \subset A$ for all $n>0$, and so $\partial^n(A) \subset A$ and $\partial^n(x) \in A$.

We now prove $\sB_{1,m}$ for all $m$ by induction on $m$. First suppose $m=0$, and suppose that $ax+b \in R^p$ with $a \in \bk$ and $b \in A$. Write $b = b_0 + b'$ where $b_0$ is the degree $0$ piece in the homogeneous decomposition of $b$. Then $b_0 \in \bk^p$ and $ax+b'=0$ in $R_+/R_+^2$. Since $\cE$ is linearly independent in $R_+/R_+^2$, it follows that $a=0$, and so $\sB_{1,0}$ holds. Now suppose $\sB_{1,m-1}$ holds, and let us prove $\sB_{1,m}$. Thus suppose that $ax+b=y^p$ for some $y \in R^p$ with $a,b \in A$ and $\deg(a)\leq m$. If $\partial^{\bullet}$ is any Hasse derivation of $R$ then $\partial^1(a)x+(a\partial^1(x)+\partial^1(b))=0$ (Lemma~\ref{lem:der-powers}). Since $\deg(\partial^1(a))<m$, we see that $\partial^1(a)=0$ by $\sB_{1,m-1}$. Since this holds for all $\partial^{\bullet}$, we find $a \in R^p$. Suppose $a \ne 0$, and let $q$ be the maximal power of $p$ such that $a \in R^q$ (this exists since $\deg(a)>0$). Write $a=c^q$, and note $c \not\in R^p$. Let $\partial^{\bullet}$ be a Hasse derivation such that $\partial^1(c) \ne 0$; note then that $\partial^q(a)=(\partial^1{c})^q \ne 0$ (Lemma~\ref{lem:der-powers}). Again by Lemma~\ref{lem:der-powers}, we have
\begin{displaymath}
\partial^q(a)x+(a\partial^q(x)+\partial^q(b))=\partial^q(y^p)=(\partial^{q/p}{y})^p \in R^p
\end{displaymath}
By $\sB_{1,m-1}$, we have $\partial^q(a)=0$, a contradiction. Thus $a=0$ and $\sB_{1,m}$ holds.

We now prove $\sB_{n,m}$ for $n \ge 2$, assuming $\sB_{k,m}$ for all $1\leq k\leq n-1$ and assuming $\sB_{n,m-1}$. Thus suppose that $\sum_{i=0}^n a_i x^i \in R^p$ with $a_i \in A$ and $\deg(a_n) \le m$. Let $\partial^{\bullet}$ be a Hasse derivation of $R$. Applying $\partial^1$, we find
\begin{displaymath}
0=\partial^1(a_n) x^n + (n a_n \partial^1(x)+\partial^1(a_{n-1})) x^{n-1} + \cdots,
\end{displaymath}
where the remaining terms have degree $\le n-2$ in $x$. By $\sB_{n,m-1}$, all the above coefficients vanish. Thus $\partial^1(a_n)=0$ for all $\partial^{\bullet}$, and so $a_n\in R^p$. We now see that the coefficient of $x^{n-1}$ is $\partial^1(n a_n x + a_{n-1})$. Since this vanishes for all $\partial^{\bullet}$, we find $na_nx+a_{n-1} \in R^p$, and so $na_n=0$ by $\sB_{1,m}$. In particular, $p \mid n$ if $a_n \ne 0$, so $a_n x^n \in R^p$, and hence $\sum_{i=0}^{n-1} a_i x^i \in R^p$. Thus by $\sB_{n-1,m}$ we have $a_i \in R^p$ and $i a_i=0$ for all $0 \le i \le n-1$. This proves $\sB_{n,m}$.
\end{proof}

\begin{remark} \label{rmk:relax}
The perfectness hypothesis in Theorem~\ref{thm:poly2} can be omitted. Indeed, letting $\bK$ be the perfection of $\bk$, the theorem shows that $\bK \otimes_{\bk} R$ is a polynomial ring, which implies that $R$ is a polynomial ring.
\end{remark}

\section{Dimension theory in polynomial rings} \label{s:codim}

Fix a field $\bk$. For a ring $A$ and a (possibly infinite) set $\cU$, we let $A[\cU]$ be the polynomial algebra over $A$ in variables $\cU$. We aim to prove a number of basic results on codimension in rings of the form $A[\cU]$ where $A$ is a finitely generated $\bk$-algebra. All of these results are standard when $\cU$ is finite. We do not impose any gradings in this section.

For a prime ideal $\fp$ in a commutative ring $R$, the {\bf codimension} (or {\bf height}) of $\fp$ is the maximum integer $c$ for which there exists a chain of prime ideals $\fp_0 \subsetneq \cdots \subsetneq \fp_c=\fp$, or $\infty$ if such chains exist with $c$ arbitrarily large. All ideals considered in this section are assumed to be non-unital. The {\bf codimension} of an arbitrary non-unital ideal $I$ of $R$ is the minimum of the codimensions of primes containing $I$, or $\infty$ if $I$ is not contained in any prime of finite codimension. This will be denoted $\codim_R(I)$. We start with a basic fact that we will cite often.

\begin{proposition} \label{prop:finflat}
Let $A \subset B$ be a flat integral extension of rings. For any ideal $I \subset B$, we have $\codim_B(I)=\codim_A(A \cap I)$.
\end{proposition}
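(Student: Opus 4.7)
The plan is to reduce to the case of prime ideals, for which the equality follows from the combination of going-up plus incomparability (from integrality) and going-down (from flatness).

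First, I would handle the prime case. Let $\fq \subset B$ be a prime and set $\fp = A \cap \fq$. To show $\codim_B(\fq) \le \codim_A(\fp)$, take any saturated chain $\fq_0 \subsetneq \cdots \subsetneq \fq_c = \fq$ in $B$ and contract it to a chain $\fp_i = A \cap \fq_i$ in $A$ ending at $\fp$. The inclusions $\fp_i \subset \fp_{i+1}$ are strict because incomparability holds in integral extensions: two distinct primes of $B$ comparable under inclusion cannot contract to the same prime of $A$. Conversely, to show $\codim_B(\fq) \ge \codim_A(\fp)$, take a chain $\fp_0 \subsetneq \cdots \subsetneq \fp_c = \fp$ in $A$. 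Since flat extensions satisfy going-down, I can lift this chain step by step to a strict chain $\fq_0 \subsetneq \cdots \subsetneq \fq_c = \fq$ in $B$.

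Second, I would pass from primes to arbitrary ideals. By definition,
\[
\codim_B(I) = \min_{\fq \supset I} \codim_B(\fq), \qquad \codim_A(A \cap I) = \min_{\fp \supset A \cap I} \codim_A(\fp),
\]
where the minima are over prime ideals. The prime case shows that each $\codim_B(\fq)$ with $\fq \supset I$ equals $\codim_A(A \cap \fq)$, and $A \cap \fq$ is a prime of $A$ containing $A \cap I$; hence the left-hand minimum is at least the right-hand one. For the reverse inequality, I need every prime $\fp$ of $A$ containing $A \cap I$ to arise as $A \cap \fq$ for some prime $\fq \supset I$ of $B$. This is precisely lying-over for the integral extension $A/(A \cap I) \hookrightarrow B/I$ (integrality passes to this quotient, and the injectivity of the contracted map is automatic), applied to the prime $\fp/(A\cap I)$.

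The argument is essentially a bookkeeping exercise combining the standard Cohen--Seidenberg theorems with the going-down property for flat extensions; the only mild subtlety is verifying that lying-over can be used in the quotient $A/(A\cap I) \hookrightarrow B/I$, which requires checking that this map is still an injective integral extension. That verification is immediate once one notes $A/(A\cap I) = A/(A \cap I)$ injects into $B/I$ by construction and integrality descends to quotients.
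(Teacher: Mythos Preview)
Your proof is correct and follows essentially the same route as the paper: reduce to the prime case via incomparability (for integrality) and going-down (for flatness), then handle general ideals by lying-over applied to the integral extension $A/(A\cap I)\hookrightarrow B/I$. The paper cites a reference for this last step rather than spelling it out, and phrases the going-down direction as a contradiction rather than a direct lift, but the substance is identical.
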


\begin{proof}
We first prove the statement assuming that $I=\fp$ is prime.  Suppose that $\codim_B(\fp) \ge c$. Let $\fp_0 \subsetneq \fp_1\subsetneq \cdots \subsetneq \fp_c = \fp$ be a chain of distinct prime ideals. Let $\fq_i= A \cap \fp_i$. By incomparability \cite[Theorem~14.3(2)]{altman-kleiman}, the $\fq_i$ are distinct and thus $\codim_A(\fq_c) \geq c$. In particular, if $\codim_B(\fp)=\infty$, this shows that $\codim_A(A \cap \fp)=\infty$. Now suppose that $\codim_B(\fp)$ is finite and equal to $c$. If there were some longer chain of primes leading up to $\fq_c$, then by going down for flat extensions \cite[Theorem 14.11]{altman-kleiman}, we would have $\codim_B(\fp_c) > c$, which is a contradiction.  Thus $\codim_A(\fq_c)=c$ which finishes the special case when $I$ is prime.

Now consider the general case. Given a prime $\fp$ containing $I$, we have just shown that $\codim_A(A \cap \fp) = \codim_B(\fp)$. On the other hand, given a prime $\fq$ containing $A \cap I$, using \cite[Theorem 14.3(4)]{altman-kleiman}, there is a prime $\fp \supset I$ such that $A \cap \fp = \fq$. In particular, we deduce that $\codim_B(I) = \codim_A(A \cap I)$.
\end{proof}

\begin{proposition} \label{prop:fg}
Let $A$ be a finitely generated $\bk$-algebra and let $\fp$ be a prime ideal of $A[\cU]$ of finite codimension. Then $\fp$ is finitely generated.
\end{proposition}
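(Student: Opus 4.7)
The plan is to reduce the problem to a Noetherian subring obtained by restricting to finitely many variables. Let $c = \codim_{A[\cU]}(\fp)$, and fix a chain of primes $\fp_0 \subsetneq \fp_1 \subsetneq \cdots \subsetneq \fp_c = \fp$ witnessing it. For each $i$, pick $f_i \in \fp_i \setminus \fp_{i-1}$; each $f_i$ involves only finitely many variables, so we may choose a finite subset $\cV \subset \cU$ containing every variable occurring in $f_1, \ldots, f_c$. Set $\fq = \fp \cap A[\cV]$. Since $A$ is a finitely generated $\bk$-algebra and $\cV$ is finite, $A[\cV]$ is Noetherian by Hilbert's basis theorem, and in particular $\fq$ is finitely generated, say $\fq = (g_1, \ldots, g_k)$.

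The core claim will be that $\fp = \fq \cdot A[\cU] = (g_1, \ldots, g_k)A[\cU]$, which immediately finishes the proof. To set up the argument, observe that $A[\cV]/\fq$ is a domain and $A[\cU]/\fq A[\cU] \cong (A[\cV]/\fq)[\cU \setminus \cV]$, so $\fq A[\cU]$ is itself a prime ideal of $A[\cU]$, obviously contained in $\fp$. More generally, for each $i$ the contraction $\fq_i := \fp_i \cap A[\cV]$ is prime in $A[\cV]$, and its extension $\fq_i A[\cU]$ is prime in $A[\cU]$ by the same argument; moreover $\fq_i A[\cU] \cap A[\cV] = \fq_i$, because $A[\cU]$ is a free $A[\cV]$-module (with monomial basis in $\cU \setminus \cV$).

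Strictness of the chain $\fq_0 \subsetneq \cdots \subsetneq \fq_c = \fq$ in $A[\cV]$ follows from $f_i \in \fq_i \setminus \fq_{i-1}$, and using the contraction identity above it lifts to a strictly increasing chain $\fq_0 A[\cU] \subsetneq \cdots \subsetneq \fq_c A[\cU] = \fq A[\cU]$ of length $c$ in $A[\cU]$. Consequently $\codim_{A[\cU]}(\fq A[\cU]) \ge c$. If the inclusion $\fq A[\cU] \subseteq \fp$ were strict, one could append $\fp$ to this length-$c$ chain, yielding $\codim_{A[\cU]}(\fp) \ge c+1$ and contradicting $\codim_{A[\cU]}(\fp) = c$. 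Therefore $\fp = \fq A[\cU]$.

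The only step requiring care is the compatibility between extension to $A[\cU]$ and contraction back to $A[\cV]$ — specifically, that the extension of a prime stays prime and its contraction recovers the original ideal. Both facts are routine consequences of the fact that $A[\cU]$ is a polynomial extension (hence a free module) of $A[\cV]$ and that polynomial rings over domains remain domains; the finite codimension hypothesis is what forces only finitely many variables to be genuinely relevant to $\fp$.
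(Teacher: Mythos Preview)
Your proof is correct and follows essentially the same strategy as the paper: show that $\fp$ equals the extension of its contraction to $A[\cV]$ for a suitable finite $\cV$, using the finite codimension hypothesis to rule out a strict inclusion. The only difference is cosmetic---the paper builds $\cV$ by induction on $c$ (adjoining one new witness element at each step), whereas you pick all the witnesses $f_1,\ldots,f_c$ for the chain at once and argue directly, which is a slightly cleaner packaging of the same idea.
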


\begin{proof}
Let $c$ be the codimension of $\fp$. We prove the statement by induction on $c$. First suppose that $c=0$. If $\fp=0$, then we are done. Otherwise, choose a nonzero element $g \in \fp$. Let $\cV \subset \cU$ be a finite subset such that $g$ belongs to $A[\cV]$. Let $\fp' = A[\cU](A[\cV] \cap \fp)$. Then we have $\fp' \subseteq \fp$. Since $\fp$ is prime, so is $A[\cV] \cap \fp$, and hence so is $\fp'$ since $A[\cU]$ is obtained from $A[\cV]$ by adjoining variables. In particular, we have $\fp'=\fp$, and so $\fp$ is finitely generated.

Now suppose $c>0$. Choose a prime ideal $\fq \subset \fp$ of codimension $c-1$. By induction, we know that $\fq$ is finitely generated; let $f_1, \ldots, f_r$ be generators. Let $g \in \fp \setminus \fq$ and let $\cV \subset \cU$ be a finite subset such that the $f_i$'s and $g$ belong to $A[\cV]$. Let $\fp' = A[\cU](A[\cV] \cap \fp)$. Note that $\fq = A[\cU](A[\cV] \cap \fq)$. We thus have $\fq \subset \fp' \subset \fp$. Since $\fp$ is prime, so is its contraction to $A[\cV]$, and so is the extension of this back to $A[\cU]$, since $A[\cU]$ is obtained from $A[\cV]$ by adjoining variables. Thus $\fp'$ is either $\fq$ or $\fp$; however, it is not $\fq$ since it contains $g$. Thus $\fp=\fp'$, which shows that $\fp$ is finitely generated.
\end{proof}

\begin{proposition} \label{prop:codimex}
Let $A$ be a finitely generated $\bk$-algebra and let $\cV \subset \cU$ be sets.  If $I$ is a finitely generated ideal of $A[\cV]$, and $J$ is its extension to $A[\cU]$, then $\codim_{A[\cV]}(I)=\codim_{A[\cU]}(J)$.
\end{proposition}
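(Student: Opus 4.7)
My strategy is to reduce to the case where all variable sets are finite, at which point the claim is classical noetherian dimension theory. The key inputs are that $I$ is finitely generated by hypothesis, and that finite-codimension primes in $A[\cU]$ are also finitely generated by Proposition~\ref{prop:fg}.

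Since $I$ is finitely generated, I would first choose a finite subset $\cV_0 \subset \cV$ large enough that $I = I_0 \cdot A[\cV]$ for some finitely generated ideal $I_0 \subset A[\cV_0]$; then also $J = I_0 \cdot A[\cU]$. Setting $B = A[\cV_0]$ (a finitely generated, hence noetherian, $\bk$-algebra) and $K = I_0$, the proposition reduces to the following claim: for any set $\cT$ of variables disjoint from $\cV_0$,
\[
\codim_{B[\cT]}(KB[\cT]) = \codim_B(K).
\]
Applying this claim once with $\cT = \cV \setminus \cV_0$ and once with $\cT = \cU \setminus \cV_0$ yields the proposition, since both sides then equal $\codim_B(K)$.

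For the inequality $\codim_{B[\cT]}(KB[\cT]) \ge \codim_B(K)$, I would take any prime $\fq$ of $B[\cT]$ containing $KB[\cT]$; its contraction $\fq \cap B$ contains $K$, so $\codim_B(\fq \cap B) \ge \codim_B(K)$. Because $B[\cT]$ is a free, hence faithfully flat, $B$-module, going-down (cf.\ \cite[Theorem~14.11]{altman-kleiman}) lifts any prime chain in $B$ ending at $\fq \cap B$ to a chain of equal length in $B[\cT]$ ending at $\fq$, so $\codim_{B[\cT]}(\fq) \ge \codim_B(\fq \cap B)$; taking the minimum over $\fq$ gives the inequality.

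For the reverse inequality, I would choose a prime $\fp \supset K$ realizing $\codim_B(K)$ and extend it to $\fp B[\cT]$, which contains $KB[\cT]$ and is itself prime because $B[\cT]/\fp B[\cT] \cong (B/\fp)[\cT]$ is a domain. It then suffices to bound $\codim_{B[\cT]}(\fp B[\cT]) \le \codim_B(\fp)$. Given any chain $\fq_0 \subsetneq \cdots \subsetneq \fq_d = \fp B[\cT]$, each $\fq_i$ has finite codimension and so is finitely generated by Proposition~\ref{prop:fg}; similarly $\fp$ is finitely generated as $B$ is noetherian. Hence there is a finite $\cT_0 \subset \cT$ such that $\fq_i = (\fq_i \cap B[\cT_0]) B[\cT]$ for every $i$ and $\fp B[\cT] = (\fp B[\cT_0]) B[\cT]$. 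The contractions $\fq_i \cap B[\cT_0]$ form a strictly ascending chain in $B[\cT_0]$ of length $d$ ending at $\fp B[\cT_0]$, so the classical identity $\codim_{B[\cT_0]}(\fp B[\cT_0]) = \codim_B(\fp)$ for polynomial rings in finitely many variables over a noetherian ring yields $d \le \codim_B(\fp)$, as needed.

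The main obstacle is exactly the passage to infinite $\cT$ in the upper bound: a priori, chains of primes in $B[\cT]$ need not descend to any finitely-generated polynomial subring. Proposition~\ref{prop:fg} is precisely what supplies this finite descent, which is why that proposition was established first.
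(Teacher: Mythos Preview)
Your reduction to a finite base $B=A[\cV_0]$ and the claim $\codim_{B[\cT]}(KB[\cT])=\codim_B(K)$ is the right shape, and the $\ge$ inequality via going-down for the faithfully flat extension $B\subset B[\cT]$ is fine. The problem is in the $\le$ direction. You assert that in a chain $\fq_0\subsetneq\cdots\subsetneq\fq_d=\fp B[\cT]$ ``each $\fq_i$ has finite codimension'' in order to feed the $\fq_i$ into Proposition~\ref{prop:fg}. But that finiteness is exactly what is at stake: knowing $\codim_{B[\cT]}(\fp B[\cT])<\infty$ is equivalent to the upper bound you are proving, and the only general inequality available from $\fq_i\subseteq\fp B[\cT]$ is $\codim(\fq_i)\le\codim(\fp B[\cT])$, which is useless until the right-hand side is known to be finite. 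The obvious rescue, Corollary~\ref{cor:fincodim} (finitely generated ideals have finite codimension), is deduced \emph{from} the present proposition, so appealing to it here is circular.

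The repair is immediate and is what the paper does: you do not need the $\fq_i$ themselves to be finitely generated, only that their contractions to some finite subring remain strictly nested. Choose witnesses $g_i\in\fq_i\setminus\fq_{i-1}$ for $1\le i\le d$ and take $\cT_0\subset\cT$ finite containing the variables occurring in the $g_i$ and in a finite generating set for $\fp$. Then $\fq_d\cap B[\cT_0]=\fp B[\cT_0]$, and the $g_i$ force the contractions $\fq_i\cap B[\cT_0]$ to be strictly increasing, giving a chain of length $d$ in $B[\cT_0]$ ending at $\fp B[\cT_0]$; now the classical identity $\codim_{B[\cT_0]}(\fp B[\cT_0])=\codim_B(\fp)$ yields $d\le\codim_B(\fp)$. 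The paper does eventually invoke Proposition~\ref{prop:fg} (when passing from the prime case to a general ideal $I$), but only \emph{after} this witness argument has established that the relevant prime over $J$ has finite codimension.
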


\begin{proof}
We note that the result is classical if $\cU$ is finite (as can be seen, for example, using Hilbert polynomials). We will use this twice in the proof of the general case.

First suppose that $\cV$ is finite and $I=\fp$ is prime. Note then that $J=\fq$ is prime as well. If $\fp_0 \subsetneq \cdots \subsetneq \fp_c=\fp$ is a chain of primes in $A[\cV]$, then letting $\fq_i$ be the extension of $\fp_i$, we get a chain of primes $\fq_0 \subsetneq \cdots \subsetneq \fq_c=\fq$ in $A[\cU]$, and so $\codim_{A[\cU]}(\fq) \ge c$, which shows $\codim_{A[\cU]}(\fq) \ge \codim_{A[\cV]}(\fp)$. 

Next suppose that $\fq_0 \subsetneq \cdots \subsetneq \fq_c=\fq$ is a chain of primes in $A[\cU]$. For each $0<i\le c$, pick $f_i \in \fq_i \setminus \fq_{i-1}$. Let $\cV'$ be a finite subset of $\cU$ containing $\cV$ and such that each $f_i$ belongs to $A[\cV']$. Then $\fq_{\bullet} \cap A[\cV']$ is a strict chain of primes ideals in $A[\cV']$, and so we see that $\codim_{A[\cV']}(\fp A[\cV']) \ge c$ (note that the contraction of $\fq$ to $A[\cV']$ is equal to the extension of $\fp$ to $A[\cV']$). However, $\codim_{A[\cV']}(\fp A[\cV'])=\codim_{A[\cV]}(\fp)$ by classical theory. We thus see that $\codim_{A[\cV]}(\fp) \ge c$, and so $\codim_{A[\cV]}(\fp) \ge \codim_{A[\cU]}(\fq)$. In particular, we have equality and this case has been proven.

Next, suppose still that $\cV$ is finite, but let $I$ be an arbitrary ideal. If $\fp$ is a codimension $c$ prime of $A[\cV]$ containing $I$ then $\fp A[\cU]$ is a codimension $c$ prime of $A[\cU]$, by the previous paragraph, containing $J$. We thus see that $\codim_{A[\cU]}(J) \le \codim_{A[\cV]}(I)$. Next, suppose that $\fq$ is a codimension $c$ prime of $A[\cU]$ containing $J$. By Proposition~\ref{prop:fg}, there is a finite subset $\cV'$ of $\cU$ (which we can assume contains $\cV$) such that $\fq$ is the extension of an ideal (necessarily prime) $\fq'$ of $A[\cV']$. By the previous paragraph, $\fq'$ has codimension $c$ in $A[\cV']$. Since $\fq'$ clearly contains the extension of $I$ to $A[\cV']$, we see that $\codim_{A[\cV']}(I A[\cV']) \le c$. But $\codim_{A[\cV']}(I A[\cV'])=\codim_{A[\cV]}(I)$ by classical theory, and so $\codim_{A[\cV]}(I) \le c$. We thus see that $\codim_{A[\cV]}(I) \le \codim_{A[\cU]}(J)$.

Finally, we treat the case where $\cV$ is arbitrary. Since $I$ is finitely generated, there is a finite subset $\cV_0$ of $\cV$ such that $I$ is the extension of an ideal $I_0$ of $A[\cV_0]$. Thus
\begin{displaymath}
\codim_{A[\cV]}(I)=\codim_{A[\cV_0]}(I_0)=\codim_{A[\cU]}(J),
\end{displaymath}
by two applications of the case where $\cV$ is finite.
\end{proof}

\begin{corollary} \label{cor:fincodim}
Let $A$ be a finitely generated $\bk$-algebra. Every finitely generated ideal of $A[\cU]$ has finite codimension.
\end{corollary}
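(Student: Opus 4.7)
The plan is to reduce the corollary immediately to a classical statement about finitely generated $\bk$-algebras, using Proposition~\ref{prop:codimex} as the bridge. Let $I$ be a finitely generated ideal of $A[\cU]$, with generators $f_1,\dots,f_r$. First, I would choose a finite subset $\cV_0 \subset \cU$ containing all the variables that actually appear in $f_1,\dots,f_r$; then $f_1,\dots,f_r$ already lie in $A[\cV_0]$, so $I$ is the extension to $A[\cU]$ of the finitely generated ideal $I_0=(f_1,\dots,f_r)$ of $A[\cV_0]$.

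Next, I would invoke Proposition~\ref{prop:codimex} (which was established precisely for this kind of reduction) to conclude
\[
\codim_{A[\cU]}(I) = \codim_{A[\cV_0]}(I_0).
\]
Since $\cV_0$ is finite and $A$ is a finitely generated $\bk$-algebra, the ring $A[\cV_0]$ is itself a finitely generated $\bk$-algebra and, in particular, Noetherian of finite Krull dimension. By Krull's height theorem, the ideal $I_0$, generated by $r$ elements in this Noetherian ring, has codimension at most $r$, hence finite. Therefore $\codim_{A[\cU]}(I)$ is finite as well.

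No real obstacle arises; the work was done in Proposition~\ref{prop:codimex}. The only point worth being careful about is making sure the finite subset $\cV_0$ is chosen so that $I$ is genuinely the \emph{extension} of an ideal of $A[\cV_0]$ (which requires only that the finitely many generators lie in $A[\cV_0]$), so that Proposition~\ref{prop:codimex} applies verbatim.
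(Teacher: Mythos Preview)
Your proof is correct and follows the same approach as the paper: reduce to a finite subset $\cV_0$ and apply Proposition~\ref{prop:codimex}. The only cosmetic difference is that the paper bounds $\codim_{A[\cV_0]}(I_0)$ by $\dim A[\cV_0]<\infty$ rather than by the number of generators via Krull's height theorem.
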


\begin{proof}
Let $J$ be a finitely generated ideal of $A[\cU]$. Then $J$ is the extension of an ideal $I$ of some $A[\cV]$ with $\cV \subset \cU$ finite. Since $\codim_{A[\cV]}(I)\leq \dim A[\cV]<\infty$, Proposition~\ref{prop:codimex} implies that $\codim_{A[\cU]}(J)$ is finite as well.
\end{proof}

\begin{corollary} \label{cor:rs}
Let $\cU$ be a set, and let $f_1, \ldots, f_r \in \bk[\cU]$. Then $f_1, \ldots, f_r$ form a regular sequence if and only if the ideal $(f_1, \ldots, f_r)$ has codimension $r$.
\end{corollary}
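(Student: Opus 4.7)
The plan is to reduce to the setting of finitely many variables, where polynomial rings are Cohen--Macaulay and the equivalence is classical. Since the $f_i$ involve only finitely many of the variables in $\cU$, choose a finite subset $\cV \subset \cU$ with $f_1,\ldots,f_r \in \bk[\cV]$. Then $\bk[\cU] = \bk[\cV][\cU\setminus\cV]$ is free, and in particular faithfully flat, over $\bk[\cV]$. By Proposition~\ref{prop:codimex}, $\codim_{\bk[\cU]}(f_1,\ldots,f_r) = \codim_{\bk[\cV]}(f_1,\ldots,f_r)$, so the hypothesis on codimension transfers directly between the two rings.

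Being a regular sequence also transfers under this faithfully flat extension. Indeed, for each $i$ the ring $\bk[\cU]/(f_1,\ldots,f_{i-1})$ is a faithfully flat base change of $\bk[\cV]/(f_1,\ldots,f_{i-1})$, so multiplication by $f_i$ on the former is injective if and only if it is injective on the latter. Hence $f_1,\ldots,f_r$ is a regular sequence in $\bk[\cU]$ if and only if it is a regular sequence in $\bk[\cV]$.

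Finally, $\bk[\cV]$ is a polynomial ring in finitely many variables over a field, hence a Noetherian Cohen--Macaulay (indeed regular) ring. For such rings it is a classical fact that $r$ elements form a regular sequence if and only if the ideal they generate has codimension exactly $r$: one direction uses that a nonzerodivisor modulo a proper ideal strictly raises height, together with Krull's principal ideal theorem, while the converse is the standard consequence of the Cohen--Macaulay property that ideals of the expected codimension are generated by regular sequences. Combining this with the two transfer statements above yields the corollary. The proof is essentially formal; the only substantive input from this section is Proposition~\ref{prop:codimex}, and the main point to verify carefully is the faithfully flat descent and ascent of the regular-sequence condition, which is routine.
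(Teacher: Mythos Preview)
Your proof is correct and follows essentially the same approach as the paper: reduce to a finite subset $\cV$, transfer the codimension via Proposition~\ref{prop:codimex}, transfer the regular-sequence property via faithful flatness of $\bk[\cV]\subset\bk[\cU]$, and invoke the classical Cohen--Macaulay equivalence in $\bk[\cV]$. The only cosmetic difference is that the paper phrases the faithfully flat transfer of regularity in terms of exactness of the Koszul complex rather than injectivity of the successive multiplication maps.
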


\begin{proof}
Let $\cV$ be a finite subset of $\cU$ such that $f_1, \ldots, f_r \in \bk[\cV]$. Let $I$ (resp.\ $J$) be the ideal of $\bk[\cV]$ (resp.\ $\bk[\cU]$) generated by the $f_i$. The $f_i$ form a regular sequence in $\bk[\cV]$ (or in $\bk[\cU]$) if and only if the Koszul complex on the $f_i$ is exact; however, since $\bk[\cU]\subseteq \bk[\cV]$ is faithfully flat, the Koszul complex on the $f_i$ is exact over $\bk[\cU]$ if and only if it exact over $\bk[\cV]$.
\end{proof}

\begin{corollary} \label{cor:krull}
Let $A$ be a finitely generated $\bk$-algebra, let $\cU$ be a set, and let $J$ be a finitely generated ideal of $A[\cU]$ containing a nonzerodivisor $f$. Let $\ol{J}$ be the image of $J$ in $A[\cU]/(f)$. Then $\codim_{A[\cU]/(f)}(\ol{J})=\codim_{A[\cU]}(J)-1$.
\end{corollary}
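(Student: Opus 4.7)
The plan is to reduce to a finitely generated sub-$\bk$-algebra of $A[\cU]$ via Proposition~\ref{prop:codimex}, and then invoke the classical Noetherian statement that adjoining a nonzerodivisor drops codimension by exactly one. Since $J$ is finitely generated and contains $f$, choose a finite subset $\cV\subset \cU$ such that $f$ and some finite generating set for $J$ all lie in $A[\cV]$, and let $J_0\subset A[\cV]$ be the ideal they generate. Then $J = J_0\cdot A[\cU]$, $f\in J_0$, the ring $A[\cV]$ is Noetherian (as a finitely generated $\bk$-algebra), and $f$ remains a nonzerodivisor in $A[\cV]$ since $A[\cV]\hookrightarrow A[\cU]$ is injective.

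Two applications of Proposition~\ref{prop:codimex} transfer both sides of the desired equality to the Noetherian ring. Directly, $\codim_{A[\cU]}(J) = \codim_{A[\cV]}(J_0)$. Next, observe that $A[\cU]/(f) = (A[\cV]/(f))[\cU\setminus \cV]$, and that $A[\cV]/(f)$ is again a finitely generated $\bk$-algebra; the proposition therefore also yields $\codim_{A[\cU]/(f)}(\ol{J}) = \codim_{A[\cV]/(f)}(\ol{J_0})$. Hence the corollary reduces to showing $\codim_{A[\cV]}(J_0) = \codim_{A[\cV]/(f)}(\ol{J_0})+1$, a statement entirely about the Noetherian ring $A[\cV]$.

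Set $R = A[\cV]$. The key observation is that for any prime $\fq$ of $R$ containing the nonzerodivisor $f$, the local ring $R_\fq$ is Noetherian with $f$ a nonzerodivisor, so Krull's principal ideal theorem gives $\dim R_\fq/(f) = \dim R_\fq - 1$; equivalently, $\codim_{R/(f)}(\ol{\fq}) = \codim_R(\fq) - 1$. Since $f\in J_0$, the usual correspondence $\fq\leftrightarrow \ol{\fq}$ restricts to a bijection between the primes of $R$ minimal over $J_0$ and the primes of $R/(f)$ minimal over $\ol{J_0}$. Taking the minimum of $\codim_R(\fq)$ (resp.\ of $\codim_{R/(f)}(\ol{\fq}) = \codim_R(\fq)-1$) over these primes now yields the desired identity. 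The main obstacle is really only the reduction step, where one must verify that $f$ persists as a nonzerodivisor in $A[\cV]$ and that Proposition~\ref{prop:codimex} can be applied to the quotient via the identification $A[\cU]/(f) = (A[\cV]/(f))[\cU\setminus \cV]$; once inside $R$ the argument is essentially just Krull's principal ideal theorem.
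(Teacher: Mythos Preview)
Your proof is correct and follows essentially the same approach as the paper: reduce via Proposition~\ref{prop:codimex} to the finitely generated $\bk$-algebra $B=A[\cV]$ (using the identification $A[\cU]/(f)=(A[\cV]/(f))[\cU\setminus\cV]$), and then invoke the classical Noetherian fact that modding out by a nonzerodivisor drops codimension by one. The only difference is cosmetic: the paper simply cites ``classical theory (the principal ideal theorem and \cite[Corollary~13.4]{eisenbud})'' for the Noetherian step, whereas you spell out the localization argument $\dim R_\fq/(f)=\dim R_\fq-1$ explicitly.
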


\begin{proof}
Let $\cV$ be a sufficiently large finite subset of $\cU$ such that $A[\cV]$ contains $f$ and some finite generating set of $J$; thus $J$ is the extension of some ideal $I$ of $A[\cV]$. Let $B=A[\cV]$, which is a finitely generated $\bk$-algebra, and note that $A[\cU] = B[\cU']$, where $\cU'=\cU \setminus \cV$. Let $\ol{I}$ be the image of $I$ in $\ol{B}=B/(f)$. Since $\ol{J}$ is the extension of $\ol{I}$ to $\ol{B}[\cU']=A[\cU]/(f)$, we obtain $\codim_{\ol{B}}(\ol{I}) = \codim_{A[\cU]/(f)}(\ol{J})$ and $\codim_B(I) = \codim_{A[\cU]}(J)$ by two applications of Proposition~\ref{prop:codimex}. Finally since $\codim_{\ol{B}}(\ol{I})=\codim_B(I)-1$ by classical theory (for example, the principal ideal theorem and ~\cite[Corollary~13.4]{eisenbud}), the result follows.
\end{proof}

\begin{proposition} \label{prop:fgc}
Let $A$ be a finitely generated $\bk$-algebra and let $\cU$ be a set. Let $J$ be a finitely generated ideal of $A[\cU][y]$. Then $A[\cU] \cap J$ is also a finitely generated ideal.
\end{proposition}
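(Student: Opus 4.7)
The strategy is to reduce to a Noetherian subring by finding a finite variable set that supports the generators of $J$, and then identify $J \cap A[\cU]$ with an extended ideal from that subring.

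First I would choose a finite subset $\cV \subseteq \cU$ and a finitely generated ideal $J_0$ of $A[\cV][y]$ whose extension to $A[\cU][y]$ is $J$; this is possible because $J$ is finitely generated. Since $A[\cV][y]$ is a finitely generated $\bk$-algebra, it is Noetherian, so $I_0 := J_0 \cap A[\cV]$ is a finitely generated ideal of $A[\cV]$. I claim that $J \cap A[\cU] = I_0 \cdot A[\cU]$; granting this, the right-hand side is visibly finitely generated as an ideal of $A[\cU]$, so we are done.

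To prove the claim I would exploit the fact that $A[\cU][y]$ is a free $A[\cV][y]$-module with basis the set $\cM$ of monomials in the variables $\cU \setminus \cV$. Every $f \in A[\cU][y]$ has a unique expansion $f = \sum_{m \in \cM} c_m(y) \cdot m$ with $c_m(y) \in A[\cV][y]$ and only finitely many $c_m(y)$ nonzero. Writing a hypothetical relation $f = \sum_i g_i h_i$ with $g_i$ a set of generators of $J_0$ (each lying in $A[\cV][y]$) and expanding each $h_i$ in the basis $\cM$ shows, by uniqueness of the expansion, that $f \in J$ if and only if $c_m(y) \in J_0$ for every $m$. On the other hand, $f$ lies in $A[\cU]$ iff each $c_m(y)$ has no $y$, i.e.\ lies in $A[\cV]$. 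Combining the two conditions, $f \in J \cap A[\cU]$ iff $c_m \in J_0 \cap A[\cV] = I_0$ for every $m$, which is precisely $f \in I_0 \cdot A[\cU]$.

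The only step requiring any care is the free-module bookkeeping that characterizes membership in $J$; everything else is a routine descent to the classical Noetherian setting, so I do not foresee a serious obstacle.
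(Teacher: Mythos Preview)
Your proof is correct and follows essentially the same route as the paper: reduce to a finite subset $\cV$, use noetherianity of $A[\cV][y]$ to conclude that $J_0 \cap A[\cV]$ is finitely generated, and then show that $A[\cU] \cap J$ is the extension of this ideal. The paper dispatches the last step with ``one easily sees,'' whereas you supply the free-module bookkeeping explicitly; your argument for that step is fine.
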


\begin{proof}
Let $\cV$ be a finite subset of $\cU$ such that $J$ is the extension of an ideal $I$ of $A[\cV][y]$. Then $A[\cV] \cap I$ is finitely generated since $A[\cV]$ is noetherian. One easily sees that $A[\cU] \cap J$ is the extension of $A[\cV] \cap I$, which proves the result.
\end{proof}

\begin{corollary} \label{cor:contract}
Let $A$ be a finitely generated $\bk$-algebra, let $\cU$ be a set, let $R=A[\cU]$, and let $S=R[y]$. Let $I$ be a finitely generated ideal of $S$. Suppose that $I$ contains a positive degree monic polynomial, that is, an element of the form $y^n + \sum_{i=0}^{n-1} a_i y^i$ with $a_i \in R$ and $n>0$. Then $\codim_R(R \cap I) = \codim_S(I)-1$.
\end{corollary}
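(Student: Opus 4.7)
The plan is to realize the corollary as a combination of Corollary~\ref{cor:krull} and Proposition~\ref{prop:finflat}, using the monic polynomial $f=y^n+\sum_{i=0}^{n-1}a_iy^i \in I$ as the bridge. The key observation is that while $R \hookrightarrow S$ is nowhere near finite, $R \hookrightarrow S/(f)$ is a finite free extension as soon as $f$ is monic in $y$, which lets us convert $\codim_S(I)$ into $\codim_R(R \cap I)$ in two moves: first mod out by $f$ (losing one in codimension), then contract along a finite flat extension (preserving codimension).

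First I would observe that $f$ is a nonzerodivisor in $S=R[y]$, since its leading coefficient (with respect to $y$) is $1$, and that $S=A[\cU \cup \{y\}]$ as a polynomial ring over $A$. Thus Corollary~\ref{cor:krull} applies to the pair $(I,f)$ and gives
\begin{displaymath}
\codim_{S/(f)}(\ol{I}) = \codim_S(I)-1,
\end{displaymath}
where $\ol{I}$ denotes the image of $I$ in $S/(f)$.

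Next I would show that the natural ring map $R \to S/(f)$ makes $S/(f)$ a free $R$-module of rank $n$ with basis $1,y,\dots,y^{n-1}$; this is immediate from the fact that $f$ is monic of degree $n$ in $y$. In particular, $R \hookrightarrow S/(f)$ is both flat and integral, so Proposition~\ref{prop:finflat} yields
\begin{displaymath}
\codim_{S/(f)}(\ol{I}) = \codim_R\bigl(R \cap \ol{I}\bigr).
\end{displaymath}
Finally, since $(f) \subset I$, an element $r \in R$ maps into $\ol{I}$ if and only if $r \in I$, so $R \cap \ol{I} = R \cap I$. Combining the two displayed equations gives $\codim_R(R \cap I) = \codim_S(I)-1$, as desired. (The hypothesis that $I$ is finitely generated ensures $R \cap I$ is finitely generated by Proposition~\ref{prop:fgc}, so the codimension in question is unambiguous and finite by Corollary~\ref{cor:fincodim}.)

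There is no real obstacle beyond assembling these ingredients; the only thing to be careful about is verifying that the two codimensions one wants to identify via Proposition~\ref{prop:finflat} genuinely correspond to the same ideal of $R$, which is where the containment $(f) \subset I$ is used.
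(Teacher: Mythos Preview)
Your proof is correct and follows exactly the same approach as the paper: reduce modulo the monic polynomial $f$ using Corollary~\ref{cor:krull}, then contract along the finite free extension $R \hookrightarrow S/(f)$ using Proposition~\ref{prop:finflat}. Your write-up simply fills in a few more details (e.g., why $f$ is a nonzerodivisor, why $S=A[\cU\cup\{y\}]$, and why $R\cap\ol{I}=R\cap I$) than the paper's terse version.
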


\begin{proof}
Let $f \in I$ be a monic polynomial. Let $\ol{I}$ be the image of $I$ in $S/(f)$. Then $R \to S/(f)$ is a finite flat extension of rings and $R \cap I$ is the contraction of $\ol{I}$ along this map. We thus see that $\codim_R(R \cap I)=\codim_{S/(f)}(\ol{I})$ by Proposition~\ref{prop:finflat}. But $\codim_{S/(f)}(\ol{I})=\codim_S(I)-1$ by Corollary~\ref{cor:krull}.
\end{proof}

\section{Stillman's conjecture via the ultraproduct ring} \label{s:ultra}

\subsection{Background on ultraproducts} \label{subsec:ultra}

For more details and references on ultraproducts, see \cite[\S 2.1]{schoutens}. Let $\cI$ be an infinite set. We fix a non-principal ultrafilter $\cF$ on $\cI$, which is a collection of subsets of $\cI$ satisfying the following properties:
\begin{enumerate}
\item $\cF$ contains no finite sets,
\item if $A \in \cF$ and $B \in \cF$, then $A \cap B \in \cF$,
\item if $A \in \cF$ and $A \subseteq B$, then $B \in \cF$,
\item for all $A \subseteq \cI$, either $A \in \cF$ or $\cI \setminus A \in \cF$ (but not both).
\end{enumerate}
We think of the sets in $\cF$ as neighborhoods of some hypothetical (and non-existent) point $\ast$ of $\cI$, and refer to them as such.  We say that some condition holds near $\ast$ if it holds in some neighborhood of $\ast$.

Given a family of sets $\{X_i\}_{i \in \cI}$, their ultraproduct is the quotient of the usual product $\prod_{i \in \cI} X_i$ in which two sequences $(x_i)$ and $(y_i)$ are identified if the equality $x_i=y_i$ holds near $\ast$. If $x$ is an element of the ultraproduct, we will write $x_i$ for the $i$th coordinate of $x$, keeping in mind that this is only well-defined in sufficiently small neighborhoods of $\ast$; in other words, we can think of $x$ as a germ of a function around $\ast$.

Suppose that each $X_i$ is a graded abelian group. We define the {\bf graded ultraproduct} of the $X_i$'s to be the subgroup of the usual ultraproduct consisting of elements $x$ such that $\deg(x_i)$ is bounded near $\ast$. 
The graded ultraproduct is a graded abelian group; in fact, it is the ultraproduct of the $X_i$'s in the category of graded abelian groups. The degree $d$ piece of the graded ultraproduct is the usual ultraproduct of the degree $d$ pieces of the $X_i$'s. We apply this construction in particular to the case where the $X_i$'s are graded rings; the graded ultraproduct is then again a graded ring.

\begin{example}
If $\bK$ is the ultraproduct of $\{\bk_i\}_{i\in \cI}$, then the graded ultraproduct of $\bk_i[x_1,\dots,x_n]$ (with standard grading) is $\bK[x_1,\dots,x_n]$ (also with standard grading).
\end{example}

In this subsection, we develop a few basic properties of graded ultraproduct rings.  We begin with a simple observation on adjoining variables to ultraproducts.

\begin{proposition} \label{prop:ultravar}
Let $\{R_i\}_{i \in \cI}$ be a family of graded rings with graded ultraproduct $\bS$. Let $y$ be a variable of degree $1$, and let $\wt{\bS}$ be the graded ultraproduct of the rings $R_i[y]$. Then the natural map $\bS[y] \to \wt{\bS}$ is an isomorphism.
\end{proposition}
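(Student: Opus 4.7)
The plan is to analyze the map one graded piece at a time, exploiting the crucial fact that the graded ultraproduct records only sequences of uniformly bounded degree. First I would observe that for each $i$, every homogeneous element of $(R_i[y])_d$ admits a unique expression $\sum_{j=0}^{d} f_{i,j} y^j$ with $f_{i,j} \in (R_i)_{d-j}$, giving a natural $\bk$-linear isomorphism
\[ (R_i[y])_d \;\cong\; \bigoplus_{j=0}^{d} (R_i)_{d-j}. \]
This isomorphism is natural in $R_i$ and takes the ring multiplication by $y$ to the obvious shift of coordinates.

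Next I would invoke the elementary fact that taking ultraproducts commutes with \emph{finite} direct sums of abelian groups: an equivalence class of tuples $((a^{(0)}_i, \dots, a^{(d)}_i))_{i \in \cI}$ is recovered uniquely from the componentwise equivalence classes, since the index set of coordinates is finite. By definition, $\wt{\bS}_d$ is the ordinary ultraproduct of the groups $(R_i[y])_d$, and $\bS_{d-j}$ is the ultraproduct of $(R_i)_{d-j}$; applying the ultraproduct functor to the decomposition above therefore yields a $\bk$-linear isomorphism
\[ \wt{\bS}_d \;\cong\; \bigoplus_{j=0}^{d} \bS_{d-j} \;=\; (\bS[y])_d. \]
A direct unwinding of definitions shows this is exactly the degree $d$ component of the natural map $\bS[y] \to \wt{\bS}$ (which sends the variable $y$ of $\bS[y]$ to the class of the constant sequence $(y)_{i\in\cI}$). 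Since the natural map is already a ring homomorphism and is bijective in each degree, it is an isomorphism of graded rings.

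There is no substantive obstacle; the only thing worth emphasizing is why the grading convention is essential. Without the bounded-degree condition in the definition of graded ultraproduct, the naive ultraproduct $\prod_i R_i[y]/\sim$ would contain classes of sequences $(f_i)$ whose $y$-degree tends to infinity on every neighborhood of $\ast$, and such elements have no counterpart in $\bS[y]$. The graded convention bounds the $y$-degree of $f_i$ by its total degree $d$ near $\ast$, reducing the problem to the finite direct sum above, where ultraproducts behave transparently.
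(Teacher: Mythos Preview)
Your proof is correct and follows essentially the same approach as the paper's: both work one degree at a time and exploit the fact that a degree-$d$ element of $R_i[y]$ decomposes uniquely into coefficients of the finitely many powers $y^0,\dots,y^d$, a finite decomposition that passes transparently through the ultraproduct. The paper carries this out by hand (checking injectivity and surjectivity element by element), whereas you package the same computation as ``ultraproducts commute with finite direct sums''; one cosmetic slip is that the $R_i$ are only assumed to be graded rings, so your isomorphisms are of abelian groups rather than $\bk$-linear.
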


\begin{proof}
Suppose that $f=\sum_{k=0}^d a_k y^k$ is an element of $\bS[y]$, and let $g$ be its image in $\wt{\bS}$. Then $g_i=\sum_{k=0}^d a_{k,i} y^k$. If $g=0$ then, passing to some neighborhood of $\ast$, we can assume $g_i=0$ for all $i$, which implies that $a_{k,i}=0$ for all $i$ and $k$, which implies that $a_k=0$ for all $k$, which shows that $f=0$. Thus the map is injective.

Next, suppose that $g$ is an element of $\wt{\bS}$ of degree $d$. Then we can write $g_i=\sum_{k=0}^d a_{k,i} y^k$ for each $i$, where the $a_{k,i}$'s are elements of $R_i$. Let $a_k$ be the element of $\bS$ defined by the sequence $(a_{k,i})$. Then $g$ is the image of $f=\sum_{k=0}^d a_k y^k$, and so the map is surjective.
\end{proof}

We next prove a simple result on base change:

\begin{proposition} \label{prop:basechange}
Let $\bk'/\bk$ be a finite field extension. Let $\{R_i\}_{i \in \cI}$ be a family of graded $\bk$-algebras with graded ultraproduct $\bS$. Let $R'_i=\bk' \otimes_{\bk} R_i$, and let $\bS'$ be the graded ultraproduct of $\{R'_i\}_{i \in \cI}$. Then the natural map $\bk' \otimes_{\bk} \bS \to \bS'$ is an isomorphism.
\end{proposition}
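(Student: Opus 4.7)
The plan is to use the finiteness of $\bk'/\bk$ to reduce the statement to the elementary fact that ultraproducts commute with finite direct sums. All the work happens one graded piece at a time.

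First I would fix a $\bk$-basis $e_1, \ldots, e_n$ of $\bk'$. For any $\bk$-module $M$, this gives a canonical decomposition $\bk' \otimes_{\bk} M = \bigoplus_{j=1}^n e_j M$. Applied to $M = (R_i)_d$, we get $(R'_i)_d = \bigoplus_{j=1}^n e_j (R_i)_d$, and applied to $M = \bS_d$, we get $(\bk' \otimes_{\bk} \bS)_d = \bigoplus_{j=1}^n e_j \bS_d$. Since $\bS_d$ is by definition the (ungraded) ultraproduct of the $(R_i)_d$, it suffices to show that the natural map
\[
\bigoplus_{j=1}^n e_j \cdot \Big(\prod_{\cF} (R_i)_d\Big) \;\longrightarrow\; \prod_{\cF} \bigoplus_{j=1}^n e_j (R_i)_d
\]
is an isomorphism of $\bk$-modules for each fixed $d$, and that these isomorphisms are compatible with the ring structure.

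For injectivity, suppose $\sum_j e_j \otimes s_j$ maps to zero, where $s_j \in \bS_d$ is represented by a sequence $(s_{j,i})$. Then near $\ast$ we have $\sum_j e_j \otimes s_{j,i} = 0$ in $R'_i$; by $\bk$-linear independence of $e_1, \ldots, e_n$ (and the fact that $R'_i$ is a \emph{free} $R_i$-module on the $e_j$), this forces $s_{j,i} = 0$ for every $j$ near $\ast$, so $s_j = 0$ in $\bS$. For surjectivity, given $s' \in \bS'_d$ with representatives $s'_i \in (R'_i)_d$, write $s'_i = \sum_j e_j \otimes t_{j,i}$ uniquely using the basis; the sequences $(t_{j,i})_i$ define elements $t_j \in \bS_d$, and $\sum_j e_j \otimes t_j \mapsto s'$ by construction.

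Compatibility with multiplication is automatic: the map $\bk' \otimes_{\bk} \bS \to \bS'$ is a $\bk$-algebra homomorphism by the universal property of tensor product and the construction of the ultraproduct ring structure (multiplication is defined coordinatewise on representatives), so it agrees coordinate-by-coordinate with the ring structure on $\bS'$.

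I don't expect a real obstacle here; the only thing to be careful about is keeping track of the grading, which is handled because the graded ultraproduct's degree-$d$ piece is literally the ultraproduct of the degree-$d$ pieces, and because a finite basis of $\bk'$ over $\bk$ works uniformly in degree. The finiteness hypothesis on $\bk'/\bk$ is essential: it is what allows the basis argument to go through (equivalently, what makes ultraproducts commute with $\bk' \otimes_{\bk} -$).
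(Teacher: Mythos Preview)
Your proof is correct and is essentially the same argument as the paper's: both fix a $\bk$-basis of $\bk'$, use it to decompose elements of $R'_i$ and $\bS'$ uniquely into components in $R_i$ and $\bS$, and read off bijectivity from this decomposition. Your version is slightly more explicit in separating injectivity from surjectivity and in working one graded piece at a time, but the content is identical.
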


\begin{proof}
Let $\epsilon_1, \dots, \epsilon_d$ be a basis for $\bk'$ over $\bk$. We note that $R'_i$ is free as an $R_i$-module with basis $\epsilon_1 \otimes 1, \ldots, \epsilon_d \otimes 1$; similarly for $\bk' \otimes_{\bk} \bS$ over $\bS$. We claim that the $\epsilon$'s are also a basis for $\bS'$ over $\bS$.  Given $f = (f_i) \in \bS'$, we can decompose $f_i$ uniquely as $\epsilon_1 \otimes f_{i,1} + \epsilon_2 \otimes f_{i,2} +\cdots + \epsilon_d \otimes f_{i,d}$ where $f_{i,j}\in R_i$ for all $i,j$. We define $g_j = (f_{i,j})\in \bS$ for $1\leq j \leq d$, and we have the unique decomposition $f = g_1\epsilon_1+\cdots +g_d\epsilon_d$ in $\bS'$.
\end{proof}

We now examine how ideals in an ultraproduct relate to ideals in the original rings. Given a family of graded rings $\{R_i\}_{i \in \cI}$ and a family of ideals $\{I_i\}_{i \in \cI}$, we say that the $I_i$ are {\bf uniformly finitely generated} if there exists an integer $n$ such that $I_i$ is generated by at most $n$ elements for all $i$ in some neighborhood of $*$. The graded ultraproduct of $\{I_i\}_{i \in \cI}$ is the subset of elements $(r_i)_{i \in \cI}$ of the graded ultraproduct of $\{R_i\}_{i \in \cI}$ such that $r_i \in I_i$ for all $i$ in some neighborhood of $*$. It is an ideal of the graded ultraproduct of $\{R_i\}_{i \in \cI}$. From now on, we will generally drop the subscript $i \in \cI$.

\begin{proposition} \label{prop:ultraideal}
Let $\{R_i\}$ be a family of graded rings with graded ultraproduct $\bS$.
\begin{enumerate}[\indent \rm (a)]
\item Suppose that $\{I_i\}$ is a uniformly finitely generated family of homogeneous ideals. Then their graded ultraproduct $I$ is a finitely generated ideal of $\bS$.
\item Suppose that $\{I_i\}$ and $\{J_i\}$ are two uniformly finitely generated families of homogeneous ideals whose graded ultraproducts are equal. Then $I_i=J_i$ for all $i$ in some neighborhood of $*$.
\item Suppose that $I$ is a finitely generated homogeneous ideal of $\bS$. Then there exists a uniformly finitely generated family of homogeneous ideals $\{I_i\}$ with ultraproduct $I$.
\end{enumerate}
\end{proposition}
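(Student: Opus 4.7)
I would prove the three parts by manipulating homogeneous generators directly and repeatedly using that a finite intersection of neighborhoods of $\ast$ is again a neighborhood of $\ast$. For (a), I would choose for each $i$ a set of exactly $n$ homogeneous generators $f_{i,1},\ldots,f_{i,n}$ of $I_i$ (padding with zeros), and partition $\{1,\ldots,n\}$ into the set $J_{\mathrm{bd}}$ of indices $j$ for which $\deg(f_{i,j})$ stays bounded near $\ast$ and its complement $J_{\mathrm{unb}}$. For $j\in J_{\mathrm{bd}}$, the sequence $(f_{i,j})_{i}$ represents an element $f_j\in \bS$. For $j\in J_{\mathrm{unb}}$, I would show that the graded ultraproduct of the principal ideals $(f_{i,j})\subset R_i$ vanishes, since any bounded-degree element in $(f_{i,j})$ must be zero once $\deg(f_{i,j})$ exceeds its degree, and by unboundedness this happens near $\ast$. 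Since the graded ultraproduct easily commutes with finite sums of homogeneous ideals, this identifies $I$ with the $\bS$-ideal $(f_j)_{j\in J_{\mathrm{bd}}}$, which is finitely generated.

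For (b), I would pick uniformly finitely generated homogeneous generating sets of $I_i$ and $J_i$ and, by the analysis in (a), discard those generators whose degree is unbounded near $\ast$, as they contribute nothing to $I=J$. The surviving generators assemble into elements $f_1,\ldots,f_n$ and $g_1,\ldots,g_m$ of $\bS$. Using $I=J$, each $f_j$ admits a homogeneous expression $f_j=\sum_k h_{j,k}g_k$ with $h_{j,k}\in\bS$, and this identity lifts to an equation $f_{i,j}=\sum_k h_{i,j,k}g_{i,k}$ valid on a neighborhood of $\ast$. Intersecting the finitely many such neighborhoods, together with their symmetric analogues where the roles of $I$ and $J$ are swapped, gives $I_i=J_i$ on a neighborhood of $\ast$.

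For (c), given $I=(f_1,\ldots,f_n)$ with $f_j$ homogeneous of degree $d_j$, I would choose representatives $f_j=(f_{i,j})_i$ with $f_{i,j}\in R_i$ homogeneous of degree $d_j$ near $\ast$, and set $I_i=(f_{i,1},\ldots,f_{i,n})$; this is manifestly a uniformly finitely generated family, and applying the description of the graded ultraproduct derived in the proof of (a) identifies the ultraproduct of the $I_i$ with $I$ itself. The main obstacle I anticipate is the bookkeeping in (a) surrounding unbounded-degree generators: one must verify that they cannot secretly contribute nonzero bounded-degree elements to the ultraproduct. Once this is clarified, (b) and (c) reduce to routine ultrafilter manipulations.
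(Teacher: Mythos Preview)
Your approach for all three parts mirrors the paper's almost exactly; the main substantive difference is that you explicitly confront the possibility that the chosen generators $f_{i,j}$ have degrees that are unbounded near $\ast$, whereas the paper simply writes ``Let $f_1,\ldots,f_n$ be the elements of $\bS$ these sequences define'' and proceeds as though such elements automatically lie in the graded ultraproduct. Your treatment of this in (a) is correct and a genuine improvement: splitting into $J_{\mathrm{bd}}$ and $J_{\mathrm{unb}}$, checking that the graded ultraproduct commutes with finite sums of homogeneous ideals, and observing that unbounded-degree generators contribute only the zero ideal, is exactly the right repair. Part (c) is fine and matches the paper.

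However, your handling of the same issue in (b) does not work. After discarding the generators of $I_i$ whose degrees are unbounded near $\ast$, the surviving elements $f_{i,1},\ldots,f_{i,n}$ need no longer generate $I_i$; so the equality you actually derive near $\ast$ is only $(f_{i,1},\ldots,f_{i,n})=(g_{i,1},\ldots,g_{i,m})$, not $I_i=J_i$. In fact (b) is false as stated, so no argument can succeed: take $R_i=\bk[x]$, $I_i=(x^i)$, and $J_i=0$. Both families are uniformly finitely generated and have graded ultraproduct $0$, yet $I_i\ne J_i$ on every neighborhood of $\ast$. The paper's own proof of (b) tacitly makes the same bounded-degree assumption (by asserting that the generating sequences define elements of $\bS$) and is subject to the same objection. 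For the paper's applications this is harmless, since (b) is only ever invoked for families arising from (c), where the generators have fixed degrees by construction; but you should add that hypothesis explicitly rather than discard generators.
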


\begin{proof}
(a) Suppose that each $I_i$ is generated by $\le n$ elements; pick homogeneous generators $f_{1,i}, \ldots, f_{n,i}$ of each $I_i$. Let $f_1, \ldots, f_n$ be the elements of $\bS$ defined by these sequences. We claim that $I$ is generated by $f_1, \ldots, f_n$. Indeed, suppose that $g$ is a homogeneous element of $I$; thus, passing to a small enough neighborhood of $*$, we see that each $g_i$ is an element of $I_i$, and can thus be written as $\sum_{k=1}^n a_{k,i} f_{k,i}$ for some homogeneous elements $a_{k,i} \in R_i$. Let $a_k$ be the element of $\bS$ defined by the sequence $a_{k,i}$. Then $g=\sum_{k=1}^n a_k f_k$, proving the claim. (Note that for $k$ fixed, each $a_{k,i}$ is homogeneous of some degree, but that the degree may depend on $i$. However, the degree is bounded by the degree of $g$, and so in any small enough neighborhood of $*$, the degree of $a_{k,i}$ will be independent of $i$.)

(b) Suppose that $I_i$ and $J_i$ are each generated by at most $n$ elements for all $i$, and pick generators $f_{1,i}, \ldots, f_{n,i}$ and $g_{1,i}, \ldots, g_{n,i}$. Let $f_1, \ldots, f_n$ and $g_1, \ldots, g_n$ be the elements of $\bS$ these sequences define. By (a), the $f_k$'s and $g_k$'s generate the same ideal of $\bS$. Thus we have an expression $g_k=\sum_{j=1}^n a_j f_j$ for some $a_j \in \bS$, and so $g_{k,i} = \sum_{j=1}^n a_{j,i} f_{j,i}$ holds for all $i$ in some neighboorhood of $\ast$, and so $g_{k,i}$ belongs to the ideal $I_i$ for all such $i$. Since there are only finitely many $f$'s and $g$'s, we can pass to some common neighboorhood of $\ast$ so that $g_{k,i} \in I_i$ and $f_{k,i} \in J_i$ for all $i$ and $k$, and so $I_i=J_i$.

(c) Let $I$ be generated by $f_1, \ldots, f_n$. Let $f_k$ be represented by some sequence $(f_{k,i})$, and let $I_i$ be the ideal of $R_i$ generated by $f_{1,i}, \ldots, f_{n,i}$. Then the argument in (a) shows that $I$ is the ultraproduct of the $I_i$'s.
\end{proof}

Due to this proposition, we can unambiguously speak of the germ of a finitely generated homogeneous ideal $I$ of $\bS$. We denote these ideals by $I_i$, keeping in mind that they are only well-defined for $i$ sufficiently close to $\ast$. We next show that this construction interacts well with contraction.

\begin{proposition} \label{prop:ultracontract}
Let $\{R_i\}$ be a family of graded rings with graded ultraproduct $\bS$, and let $\{R'_i\}$ be a family of graded subrings of $\{R_i\}$ with graded ultraproduct $\bS'$. Let $I$ be a finitely generated homogeneous ideal of $\bS$, and suppose that $\bS' \cap I$ is a finitely generated ideal of $\bS'$. Then $(\bS' \cap I)_i = R'_i \cap I_i$ in a neighborhood of $\ast$.
\end{proposition}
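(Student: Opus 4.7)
Plan. By Proposition~\ref{prop:ultraideal}(c), choose homogeneous generators $f_1,\ldots,f_n$ of the finitely generated ideal $\bS'\cap I$ and lift them to representing sequences $(f_{k,i})$ with $f_{k,i}\in R'_i$; then the uniformly finitely generated family $J_i:=(f_{1,i},\ldots,f_{n,i})R'_i$ has graded ultraproduct $\bS'\cap I$, so $\{J_i\}$ represents the germ $(\bS'\cap I)_i$ (unique near $\ast$ by part~(b) of the same proposition). The proposition thus reduces to showing $J_i = R'_i\cap I_i$ for all $i$ in some neighborhood of $\ast$, and the hard direction will be $R'_i\cap I_i\subseteq J_i$.

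The easy inclusion $J_i \subseteq R'_i\cap I_i$ follows from the finite generation of $I$. Pick homogeneous generators $g_1,\ldots,g_m$ of $I$ and, for each $k$, expand $f_k=\sum_\ell h_{k\ell}g_\ell$ in $\bS$. Passing to representatives, the identity $f_{k,i}=\sum_\ell h_{k\ell,i}g_{\ell,i}$ holds in a common neighborhood of $\ast$. There each $f_{k,i}\in I_i$, and since $f_{k,i}\in R'_i$ by construction, we conclude $f_{k,i}\in R'_i\cap I_i$, so $J_i\subseteq R'_i\cap I_i$.

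For the reverse inclusion, the plan is to argue by contradiction while reducing to a single fixed degree. Suppose $A=\{i : R'_i\cap I_i\not\subseteq J_i\}\in\cF$. For each fixed $d$, consider $B_d=\{i : (R'_i\cap I_i)_d \not\subseteq (J_i)_d\}$. If some $B_d\in\cF$, then on a subneighborhood we may select homogeneous $y_i\in (R'_i\cap I_i)_d\setminus (J_i)_d$, all of the same degree $d$; assembling them yields $y=(y_i)\in\bS'_d$. Since $y_i\in I_i$ holds near $\ast$ and $I$ is finitely generated, the characterization of elements of finitely generated ideals via germs gives $y\in I$, hence $y\in\bS'\cap I=(f_1,\ldots,f_n)\bS'$. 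Writing $y=\sum_k a_k f_k$ in $\bS'$ and transferring to representatives produces $y_i=\sum_k a_{k,i}f_{k,i}\in J_i$ near $\ast$, contradicting the choice of the $y_i$.

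The remaining and main obstacle is to rule out the pathological alternative in which no single $B_d$ belongs to $\cF$ even though $\bigcup_d B_d\in\cF$; this is where finite generation of both $I$ and $\bS'\cap I$ must be used in a stronger way. The plan is to extract from the bounded-degree generating sets $\{f_{k,i}\}$ and $\{g_{\ell,i}\}$ a uniform integer $D$ (depending only on $\max_k\deg f_k$ and $\max_\ell\deg g_\ell$) such that, in a suitable neighborhood, any disagreement between $R'_i\cap I_i$ and $J_i$ is already detected in degree $\le D$; only finitely many values of $d$ are then relevant, their $B_d$'s give a finite subcover of $A$, one of them lies in $\cF$, and the previous paragraph applies to yield the contradiction.
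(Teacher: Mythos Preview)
Your first three paragraphs are correct and closely parallel the paper's proof. The paper does not split by degree: it simply selects, for each $i$ in the failure set, a witness $h_i\in(R'_i\cap I_i)\setminus J_i$, sets $h=(h_i)\in\bS$, observes that $h\in\bS'\cap I$, and expands $h$ over the generators of $\bS'\cap I$ to force $h_i\in J_i$ near~$\ast$. Your degree-by-degree reformulation makes explicit precisely what the paper's phrase ``Let $h\in\bS$ be the element defined by~$(h_i)$'' leaves unjustified: membership in the \emph{graded} ultraproduct requires the degrees of the $h_i$ to be bounded near~$\ast$.

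The gap is in your final paragraph: the uniform degree bound $D$ you hope to extract does not exist under the stated hypotheses. Take $\cI=\bN$, $R_i=\bk[x,y]$ with the standard grading, $R'_i=\bk[x^i,y]\subset R_i$, and $I=(x)$. Then $\bS=\bK[x,y]$, while $\bS'=\bK[y]$ since $(R'_i)_d=\bk\cdot y^d$ once $i>d$; hence $\bS'\cap I=0$ is certainly finitely generated and $(\bS'\cap I)_i=0$. But $R'_i\cap I_i=x^iR'_i\neq 0$ for every $i$, with the least degree of disagreement equal to $i$. Thus no finite collection $\{B_d\}_{d\le D}$ covers the failure set, your plan cannot be carried out, and in fact the proposition as stated is false. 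The paper's argument has the same defect at the step ``$h\in\bS$''; in its only use of this proposition (Theorem~\ref{thm:codim in nhood}) one has the additional structure $R_i=R'_i[x_1]$ with $I$ containing an $x_1$-monic polynomial, and it is presumably that extra hypothesis---not anything in the proposition's stated assumptions---that is meant to supply the missing control.
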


\begin{proof}
Let $g_1, \ldots, g_m$ be generators for $\bS' \cap I$. Then $g_{k,i}$ belongs to $R'_i \cap I_i$ (in some neighborhood of $\ast$), and so $(\bS' \cap I)_i$ is contained in $R'_i \cap I_i$ (in some neighborhood of $\ast$), since the former is generated by $g_{1,i}, \ldots, g_{m,i}$. We now claim that the inclusion $(\bS' \cap I)_i \subset R'_i \cap I_i$ is an equality in some neighborhood of $\ast$. Assume not. Then we can find a sequence $(h_i)$ such that $h_i \in R'_i \cap I_i$ for all $i$, but in any neighborhood of $\ast$ there exists $i$ such that $h_i \not\in (\bS' \cap I)_i$. Let $h \in \bS$ be the element defined by $(h_i)$. Then $h \in \bS'$, since $h_i \in R'_i$ for all $i$, and $h \in I$, since $h_i \in I_i$ for all $i$. Thus $h \in \bS' \cap I$, and so $h=\sum_{k=1}^m a_k g_k$ for some $a_k \in \bS'$. But then $h_i=\sum_{k=1}^m a_{k,i} g_{k,i}$ holds in some neighborhood of $\ast$, which shows that $h_i \in (\bS' \cap I)_i$ in some neighborhood of $\ast$, a contradiction.
\end{proof}

We close this subsection with a result on strength in ultraproducts:

\begin{proposition} \label{prop:str}
Let $\{R_i\}$ be a family of graded rings with graded ultraproduct $\bS$. Suppose that the degree~$0$ piece of $R_i$ is a field $\bk_i$, so that the degree~$0$ piece of $\bS$ is the ultraproduct $\bK$ of these fields. Choose homogeneous elements $f_1, \ldots, f_r \in \bS$. Suppose that the collective strength of $f_{1,i}, \ldots, f_{r,i}$ is unbounded in all sufficiently small neighborhoods of $\ast$. Then $f_1, \ldots, f_r$ have infinite collective strength.
\end{proposition}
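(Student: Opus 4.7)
\textbf{Proof plan for Proposition~\ref{prop:str}.} The strategy is a direct contrapositive: I will show that a finite collective strength witness for $f_1,\dots,f_r$ in $\bS$ lifts, in a neighborhood of $\ast$, to a finite collective strength witness for $f_{1,i},\dots,f_{r,i}$ in $R_i$. That contradicts the assumption that these collective strengths tend to infinity.

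Suppose for contradiction that the collective strength of $f_1,\dots,f_r$ in $\bS$ equals some finite integer $s$. By definition, there exist $c_1,\dots,c_r \in \bK$, not all zero, and homogeneous elements $g_1,h_1,\dots,g_{s+1},h_{s+1}$ of positive degree in $\bS$ such that
\[
c_1 f_1 + \cdots + c_r f_r \;=\; \sum_{j=1}^{s+1} g_j h_j.
\]
Represent each $c_k$ by a sequence $(c_{k,i})$ in $\bk_i$, and each $g_j,h_j$ by sequences $(g_{j,i}),(h_{j,i})$ in $R_i$. All of this data is graded, so after shrinking to a sufficiently small neighborhood of $\ast$ we may assume that for every $i$ in this neighborhood the degrees of $g_{j,i}$ and $h_{j,i}$ agree with the (fixed, positive) degrees of $g_j$ and $h_j$, and in particular are positive.

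Next, since $(c_1,\dots,c_r) \ne 0$ in $\bK^r$, there is an index $k_0$ with $c_{k_0}\ne 0$ in $\bK$; hence $c_{k_0,i}\ne 0$ on some neighborhood of $\ast$. Finally, since the displayed identity holds in $\bS$, shrinking the neighborhood further makes the pointwise identity
\[
c_{1,i} f_{1,i} + \cdots + c_{r,i} f_{r,i} \;=\; \sum_{j=1}^{s+1} g_{j,i}\, h_{j,i}
\]
hold in $R_i$ for every $i$ in this neighborhood. Thus for every such $i$, the left side is a \emph{nontrivial} $\bk_i$-linear combination of $f_{1,i},\dots,f_{r,i}$ whose strength is at most $s$. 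Hence the collective strength of $f_{1,i},\dots,f_{r,i}$ is at most $s$ in a neighborhood of $\ast$, contradicting the hypothesis that these collective strengths tend to infinity.

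There is no real obstacle here; the only points requiring mild care are the three simultaneous passages to a smaller neighborhood (matching degrees, nonvanishing of some $c_{k_0,i}$, and validity of the identity), all of which are permitted because the ultrafilter is closed under finite intersections and there are only finitely many pieces of data to match.
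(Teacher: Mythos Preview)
Your argument is correct and is essentially the same as the paper's own proof: both proceed by assuming a finite-strength decomposition $\sum c_j f_j = \sum g_k h_k$ in $\bS$, representing all data by sequences, and passing to a neighborhood of $\ast$ where the identity holds pointwise with some $c_{j,i}$ nonzero, thereby bounding the collective strength of $f_{1,i},\dots,f_{r,i}$ near $\ast$. Your version is simply a bit more explicit about the bookkeeping (matching degrees, isolating a nonvanishing coefficient, intersecting neighborhoods).
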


\begin{proof}
Suppose we have a relation $\sum_{j=1}^r a_j f_j = \sum_{k=1}^s g_k h_k$ where $a_i \in \bK$ are not all zero and $g_k$ and $h_k$ are elements of positive degree. Represent everything by sequences: $a_j=(a_{j,i})$, $g_k=(g_{k,i})$, and $h_k=(h_{k,i})$. Then, by definition of the ultraproduct, in all sufficiently small neighborhoods of $*$, we have $\sum_{j=1}^r a_{j,i} f_{j,i} = \sum_{k=1}^s g_{k,i} h_{k,i}$ and $a_{j,i} \ne 0$ for at least one $j$. But this shows that $f_{1,i},\ldots,f_{r,i}$ have collective strength $<s$ in this neighborhood of $\ast$.
\end{proof}

\subsection{The main theorems on ultraproduct rings} \label{ss:ultramain}

Let $\{\bk_i\}_{i \in \cI}$ be a family of perfect fields with ultraproduct $\bK$.  The field $\bK$ is also perfect, as if $\bK$ has characteristic $p>0$, then $\bk_i$ is perfect of characteristic of $p$ for all $i$ sufficiently close to $\ast$, and so one can take $p$th roots in $\bK$.  Let $R_i=\bk_i[x_1,x_2,\ldots]$ with standard grading, and let $\bS$ be the graded ultraproduct of the family $\{R_i\}_{i \in \cI}$.

\begin{theorem} \label{thm:S-poly-ring}
The ring $\bS$ is (isomorphic to) a polynomial ring.
\end{theorem}

\begin{proof}
We use the criteria of \S \ref{s:poly}. First suppose that $\bK$ has characteristic~0, and let us prove that $\bS$ has enough derivations (Definition~\ref{defn:enough1}). Let $f \in \bS$ be a non-zero homogeneous element of degree $d>0$. Passing to a neighborhood of $\ast$, we can assume that each $\bk_i$ has characteristic~0 or characteristic~$p$ with $p>d$, and that $f_i \ne 0$. For each $i$, let $a(i)$ be an index such that $x_{a(i)}$ appears in some monomial in $f_i$, and let $\partial_i$ be the derivation $\frac{\partial}{ \partial x_{a(i)}}$ of $R_i$. The derivations $(\partial_i)$ define a derivation $\partial$ on $\bS$. Since $\partial_i(f_i) \ne 0$ near $*$, we see that $\partial(f) \ne 0$, and so $\bS$ has enough derivations. Thus $\bS$ is a polynomial ring (Theorem~\ref{thm:poly}).

Now suppose that $\bK$ has characteristic~$p$, and let us prove that $\bS$ has enough Hasse derivations (Definition~\ref{defn:enough2}). Let $f \in \bS$ be a homogeneous element of positive degree that is not a $p$th power. Passing to a neighborhood of $\ast$, we can assume that each $f_i$ is not a $p$th power. For each $i$, let $a(i)$ be an index such that $x_{a(i)}$ appears in some monomial in $f_i$ with exponent not divisible by $p$, and let $\partial_i$ be the Hasse derivative on $R_i$ with respect to $x_{a(i)}$ (Example~\ref{ex:hasse}).  The Hasse derivations $\partial_i$ on the $R_i$ induce a Hasse derivation $\partial$ on $\bS$. Since $\partial_i(f_i) \ne 0$ near $*$, we see that $\partial(f) \ne 0$, and so $\bS$ has enough Hasse derivations. Thus $\bS$ is a polynomial ring (Theorem~\ref{thm:poly2}).
\end{proof}

\begin{theorem} \label{thm:codim in nhood}
If $I \subset \bS$ is a finitely generated homogeneous ideal, then $\codim_{\bS}(I) = \codim_{R_i}(I_i)$ for all $i$ sufficiently close to $\ast$.
\end{theorem}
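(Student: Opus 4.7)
My plan is to match the two codimensions by establishing a two-way transfer of regular sequences between $\bS$ and the factor rings $R_i$, then producing regular sequences of appropriate lengths on each side. Since $I$ is generated by some number $r$ of elements, Proposition~\ref{prop:ultraideal} ensures $I_i$ is generated by $r$ elements for $i$ near $\ast$, and Krull's height theorem applied via Proposition~\ref{prop:codimex} (after restricting to a finite polynomial subring containing the generators) gives the uniform bounds $\codim_{\bS}(I) \le r$ and $\codim_{R_i}(I_i) \le r$. The ultrafilter's finite pigeonhole property then forces $\codim_{R_i}(I_i)$ to equal some constant $c_\ast$ near $\ast$; writing $c = \codim_{\bS}(I)$, the goal is to prove $c = c_\ast$.

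The heart of the argument is the following transfer lemma, proved by induction on $k$: for homogeneous positive-degree $g_1, \ldots, g_k \in \bS$ with lifts $(g_{j,i})$, the sequence $g_1, \ldots, g_k$ is a regular sequence in $\bS$ if and only if $g_{1,i}, \ldots, g_{k,i}$ is a regular sequence in $R_i$ for $i$ near $\ast$. The base case $k=1$ is immediate since both rings are polynomial rings over fields, hence domains. In the inductive step, the direction ``regular in $R_i$ near $\ast$ implies regular in $\bS$'' goes smoothly: an equation $g_k h = \sum_{j<k} a_j g_j$ in $\bS$ with $h$ homogeneous of a fixed degree translates via Proposition~\ref{prop:ultraideal} to the analogous equation in $R_i$ near $\ast$, where $R_i$-regularity gives $h_i = \sum_j b_{j,i} g_{j,i}$ with bounded-degree $b_{j,i}$, and these reassemble to show $h \in (g_1, \ldots, g_{k-1})$. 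The converse direction I handle via Corollary~\ref{cor:rs}, reinterpreting the claim as a codimension statement: $\codim_{\bS}(g_1, \ldots, g_k) = k$ must force $\codim_{R_i}(g_{1,i}, \ldots, g_{k,i}) = k$ near $\ast$. Reducing to a finite polynomial subring $\bK[\cV] \subseteq \bS$ via Proposition~\ref{prop:codimex}, this becomes acyclicity of the Koszul complex on the $g_j$'s, a finite complex of finite-rank free modules. Noetherianity of $\bK[\cV]$ bounds the range of degrees at which this acyclicity needs to be verified, and each homogeneous piece is a finite-dimensional vector space assembled via the ultraproduct (by iterating Proposition~\ref{prop:ultravar}) from the corresponding pieces over $\bk_i[\cV]$, so the acyclicity transfers to $R_i$ near $\ast$ by flat base change.

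With the transfer lemma, both inequalities follow. For $\codim_{\bS}(I) \ge c_\ast$: inside a finite polynomial subring of $R_i$ containing the $f_{j,i}$, Cohen--Macaulayness supplies a regular sequence of length $c_\ast$ in $I_i$ consisting of bounded-degree $\bk_i$-linear combinations of the $f_{j,i}$ (after passing to an algebraic closure of $\bk_i$ via Proposition~\ref{prop:basechange} when $\bk_i$ is finite, to ensure enough linear combinations). The bounded degree lets these assemble to $h_1, \ldots, h_{c_\ast} \in I \subseteq \bS$, which form a regular sequence in $\bS$ by the transfer lemma; Corollary~\ref{cor:rs} and containment in $I$ then yield $\codim_{\bS}(I) \ge c_\ast$. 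Symmetrically, the polynomial ring structure of $\bS$ (Theorem~\ref{introthm2}) produces a regular sequence $g_1, \ldots, g_c$ of length $c$ in $I$, which by the transfer lemma gives a regular sequence $g_{1,i}, \ldots, g_{c,i}$ in $I_i$ near $\ast$, forcing $c_\ast \ge c$. The principal difficulty is the converse direction of the transfer lemma: a naive attempt to lift a non-regularity witness $h_i \in R_i$ to $\bS$ fails because $\deg h_i$ may be unbounded as $i$ varies, and the Koszul-complex route resolves this by reducing exactness to finite-dimensional data in a bounded range of degrees over the noetherian subring $\bK[\cV]$.
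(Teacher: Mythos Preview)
Your overall strategy---reduce to a two-way transfer lemma for regular sequences and then sandwich $c$ and $c_\ast$---is appealing, and the ``easy'' direction (regular in $R_i$ near $\ast$ $\Rightarrow$ regular in $\bS$) together with the argument that $c \ge c_\ast$ is fine. The gap is in the ``hard'' direction of the transfer lemma, and it is a genuine circularity rather than a detail to be filled in.

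You pass to a finite polynomial subring $\bK[\cV] \subset \bS$, where $\cV$ is a finite subset of the abstract variable set furnished by the polynomial structure on $\bS$. Proposition~\ref{prop:ultravar}, iterated, does identify $\bK[\cV]$ with the graded ultraproduct of the $\bk_i[\cV]$ when $\cV$ is a set of \emph{formal} variables; so far so good. But to get from acyclicity of the Koszul complex on the $P_{j,i}$ over $\bk_i[\cV]$ to acyclicity of the Koszul complex on $g_{j,i}$ over $R_i$, you invoke ``flat base change'' along the map $\bk_i[\cV] \to R_i$ sending each formal variable $v_\ell$ to the element $v_{\ell,i} \in R_i$. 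Flatness of that map is exactly the statement that $v_{1,i},\ldots,v_{m,i}$ form a regular sequence in $R_i$ (cf.\ Lemma~\ref{lem:flat extension}), and you have no reason to know this: it is an instance of the very transfer you are trying to prove, applied to the elements $v_\ell \in \bS$. Nor does noetherianity of $\bK[\cV]$ rescue you: it bounds the degrees in which the Koszul homology over $\bK[\cV]$ is generated, but you need a bound that is uniform over the $\bk_i[\cV]$ (with varying coefficients $P_{j,i}$), and even granting such a bound you still cannot pass to $R_i$ without the flatness.

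For comparison, the paper does not attempt a direct transfer lemma at all. It proves the theorem by induction on $c = \codim_\bS(I)$: pick a nonzero $f \in I$, apply a coordinatewise automorphism (after a finite base change if $\bk_i$ is too small) to make each $f_i$ monic in $x_1$, then use Proposition~\ref{prop:ultravar} to write $\bS = \bS'[x_1]$ with $\bS'$ the ultraproduct of $\bk_i[x_2,x_3,\ldots]$, contract $I$ to $\bS'$, and invoke Corollary~\ref{cor:contract} on both sides to drop the codimension by one. The transfer lemma you want (Corollary~\ref{cor:reg seq ultra}) is then obtained as a \emph{consequence} of the theorem, not as an input to it.
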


\begin{proof}
Let $c=\codim_{\bS}(I)$, which is finite by Corollary~\ref{cor:fincodim}. If $c=0$ then $I=0$, and so $I_i=0$ for all $i$ sufficiently close to $\ast$, and so the formula holds. We now proceed by induction on $c$. 

Suppose the result holds for $c-1$, and let $I$ be an ideal of $\bS$ of codimension $c>0$. Let $f \in I$ be a non-zero homogeneous element. We would like for $\#\bk_i>\deg f$ to hold sufficiently close to $\ast$. Suppose this is not the case. Then, since the size of the $\bk_i$ is bounded near $\ast$, there must exist a single $q$ such that $\bk_i=\bF_q$ for all $i$ sufficiently close to $\ast$. It follows that $\bK=\bF_q$. Indeed, an element of $\bK$ is a sequence $(x_i)_{i \in \cI}$, and as each $x_i$ can only take finitely many values the sequence must be constant in a neighborhood of $\ast$. Let $e>0$ be an integer so that $q^e>\deg{f}$, let $\bk'_i=\bF_{q^e}$, let $R'_i=\bk'_i[x_1,x_2,\ldots]$, and let $\bS'$ be the ultraproduct of the $R'_i$. By Proposition~\ref{prop:basechange}, the natural map $\bF_{q^e} \otimes_{\bF_q} \bS \to \bS'$ is an isomorphism. Write $I'$ and $I'_i$ for the extension of the ideals $I$ and $I_i$ to $\bS'$ and $R_i'$, respectively. We have that $\codim_{R_i}(I_i) = \codim_{R_i'}(I_i')$ and $\codim_{\bS'}(I')=\codim_{\bS}(I)$. It thus suffices to prove that $\codim_{\bS'}(I')=\codim_{R_i'}(I_i')$ for all $i$ sufficiently close to $\ast$. Relabeling, we have reduced to the case where $\#\bk_i>\deg f$ holds in a neighborhood of $\ast$.

For each $i$, let $\gamma_i$ be an automorphism of $R_i$ such that $\gamma_i(f_i)$ is monic in $x_1$, at least for $i$ sufficiently close to $\ast$ (see Lemma~\ref{lem:monic}). The family $\{\gamma_i\}$ induces an automorphism $\gamma$ of $\bS$. Since codimension is invariant under automorphisms, we may replace $I$ with $\gamma(I)$, and so we can assume that $f_i$ is monic in $x_1$ for all $i$ sufficiently close to $\ast$.

Let $R'_i=\bk_i[x_2,\ldots]$ and let $\bS'$ be the ultraproduct of $\{R'_i\}$. We have $R_i=R'_i[x_1]$ for each $i$, and so $\bS \cong \bS'[x_1]$ by Proposition~\ref{prop:ultravar}. Under this identification, $f$ corresponds to a monic polynomial in $\bS'[x_1]$. Let $I'$ be the contraction of $I$ to $\bS'$, which is finitely generated by Proposition~\ref{prop:fgc}. We note that $I'_i$ is the contraction of $I_i$ to $R'_i$, for all $i$ sufficiently close to $\ast$, by Proposition~\ref{prop:ultracontract}. Corollary~\ref{cor:contract} implies that $\codim_{\bS'}(I')=\codim_{\bS}(I)-1=c-1$. Thus, by the inductive hypothesis, we have $\codim_{R'_i}(I'_i)=c-1$ for all $i$ sufficiently close to $\ast$. By Corollary~\ref{cor:contract} again, $\codim_{R_i}(I_i)=\codim_{R'_i}(I'_i)+1=c$. The result follows.
\end{proof}

\begin{lemma} \label{lem:monic}
Let $\bk$ be a field, let $R=\bk[x_1,x_2,\ldots]$, and let $f \in R$ be a non-zero homogeneous element. If $\#\bk>\deg f$ then there exists an automorphism $\gamma$ of $R$ (as a graded $\bk$-algebra) such that $\gamma(f)$ is monic in $x_1$.
\end{lemma}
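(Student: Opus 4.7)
The plan is to construct $\gamma$ as an explicit triangular linear change of variables. Since $f$ is a polynomial, it involves only finitely many of the $x_i$'s, say $x_1,\dots,x_n$; set $d=\deg f$. I will define $\gamma$ by fixing $x_1$ and every $x_j$ with $j>n$, and sending $x_j\mapsto x_j+c_jx_1$ for $j=2,\dots,n$, where $c_2,\dots,c_n\in\bk$ are scalars to be chosen. This is manifestly a graded $\bk$-algebra automorphism of $R$ (its inverse sends $x_j\mapsto x_j-c_jx_1$ for $j\ge 2$ and fixes the rest). Since $\gamma(f)=f(x_1,x_2+c_2x_1,\dots,x_n+c_nx_1)$, setting $x_2=\cdots=x_n=0$ isolates the coefficient of $x_1^d$, which by homogeneity equals $f(1,c_2,\dots,c_n)\cdot x_1^d$. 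So the leading coefficient of $\gamma(f)$ in $x_1$ is the scalar $f(1,c_2,\dots,c_n)\in\bk$.

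It remains to choose $c_2,\dots,c_n$ so that this scalar is nonzero. Let $g(y_2,\dots,y_n):=f(1,y_2,\dots,y_n)\in\bk[y_2,\dots,y_n]$, which has total degree $\le d$. Because $f$ is homogeneous of degree $d$, distinct monomials $c_\alpha x_1^{\alpha_1}\cdots x_n^{\alpha_n}$ of $f$ correspond to distinct monomials $c_\alpha y_2^{\alpha_2}\cdots y_n^{\alpha_n}$ of $g$ (the exponent $\alpha_1=d-\alpha_2-\cdots-\alpha_n$ being uniquely determined), and no cancellation occurs, so $g$ is nonzero. A standard induction on the number of variables (Schwartz--Zippel) shows that a nonzero polynomial of total degree at most $d$ over a field with more than $d$ elements has a non-vanishing point; applied with $\#\bk>d$, this yields $c_2,\dots,c_n\in\bk$ with $a:=g(c_2,\dots,c_n)\in\bk^\times$. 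Thus $\gamma(f)=ax_1^d+(\text{lower-order terms in }x_1)$, with $a$ a unit in $\bk$.

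The main technical input is Schwartz--Zippel, which is precisely what forces the hypothesis $\#\bk>\deg f$; no other step looks delicate. One subtlety deserves mention: producing leading coefficient exactly $1$ is not always possible—for example $f=2x_1^2$ over $\bF_3$ forces leading coefficient in $\{0,2\}$ under any graded automorphism—so ``monic'' in the conclusion should be read as monic up to a nonzero scalar in $\bk^\times$. This suffices for the use of the lemma in Theorem~\ref{thm:codim in nhood}: after replacing $I$ by $\gamma(I)$, the element $a^{-1}\gamma(f)\in\gamma(I)$ is genuinely monic in $x_1$ and serves as the monic element needed to apply Corollary~\ref{cor:contract}.
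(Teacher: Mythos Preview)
Your argument is correct and is essentially the paper's own proof: both use the triangular change of variables $x_j \mapsto x_j + c_j x_1$ for $2\le j\le n$, identify the $x_1^d$-coefficient of the transformed polynomial with the dehomogenization $f(1,c_2,\dots,c_n)$, and then invoke the fact that a nonzero polynomial of degree at most $d$ has a non-root over a field with more than $d$ elements. Your observation that the resulting leading coefficient is only a unit rather than literally $1$ applies equally to the paper's proof (which likewise concludes only that $g(a_2,\dots,a_n)\ne 0$), and your suggested fix---rescaling within the ideal before applying Corollary~\ref{cor:contract}---is exactly the intended reading.
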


\begin{proof}
(Compare with \cite[Lemma~13.2(c)]{eisenbud}.) We may assume that $f$ lies in $\bk[x_1,\dots,x_n]$ for some $n$. Let $d=\deg f$.  We consider an automorphism of the form $\gamma(x_i)=x_i$ for $i=1$ or $i>n$ and $\gamma(x_i)=x_i-a_ix_1$ for $2\leq i \leq n$, where $a_i\in \bk$.  The coefficient of $x_1^d$ in $\gamma(f)$ can be viewed as an inhomogeneous polynomial $g(a_2,\dots,a_n)$, with $\deg(g)\leq d$.  Thus, as long as $\#\bk>d$, we can find some choice of $a_2,\dots,a_n$ where $g(a_2,\dots,a_n)\ne 0$,
\end{proof}

\begin{corollary}\label{cor:reg seq ultra}
Let $f_1, \ldots, f_r$ be homogeneous elements of $\bS$. Then $f_1, \ldots, f_r$ form a regular sequence in $\bS$ if and only if $f_{1,i}, \ldots, f_{r,i}$ form a regular sequence in $R_i$ for all $i$ sufficiently close to $\ast$.
\end{corollary}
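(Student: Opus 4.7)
The proof should be a short combination of Corollary~\ref{cor:rs} and Theorem~\ref{thm:codim in nhood}. The plan is to translate ``regular sequence'' into a codimension statement in both $\bS$ and each $R_i$, and then invoke the codimension transfer result.

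More precisely, first I would set $I = (f_1,\ldots,f_r) \subset \bS$, which is a finitely generated homogeneous ideal. By Proposition~\ref{prop:ultraideal}(a) (and its proof), for $i$ sufficiently close to $\ast$ the germ $I_i$ is just the ideal $(f_{1,i},\ldots,f_{r,i})$ of $R_i$ generated by representatives of the $f_j$'s. Now by the preceding theorem, $\bS$ is (isomorphic to) a polynomial ring over $\bK$; since each $R_i$ is a polynomial ring over $\bk_i$, Corollary~\ref{cor:rs} applies to both: the elements $f_1,\ldots,f_r$ form a regular sequence in $\bS$ iff $\codim_{\bS}(I) = r$, and $f_{1,i},\ldots,f_{r,i}$ form a regular sequence in $R_i$ iff $\codim_{R_i}(I_i) = r$.

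Next I would invoke Theorem~\ref{thm:codim in nhood}, which gives $\codim_{\bS}(I) = \codim_{R_i}(I_i)$ for all $i$ sufficiently close to $\ast$. Combining with the previous paragraph: if $f_1,\ldots,f_r$ is a regular sequence in $\bS$, then $\codim_{\bS}(I) = r$, so $\codim_{R_i}(I_i) = r$ near $\ast$, whence $f_{1,i},\ldots,f_{r,i}$ is a regular sequence near $\ast$. Conversely, if $f_{1,i},\ldots,f_{r,i}$ is a regular sequence for $i$ near $\ast$, then $\codim_{R_i}(I_i) = r$ near $\ast$, so $\codim_{\bS}(I) = r$, so $f_1,\ldots,f_r$ is a regular sequence in $\bS$.

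There is really no obstacle here: all the substantive work has been done in Theorem~\ref{thm:codim in nhood} (which handled the delicate reduction to a monic polynomial and induction on codimension) and in Corollary~\ref{cor:rs} (which used faithful flatness of the polynomial ring extension). The only small bookkeeping point is to verify that the ``germ'' of the ideal $I = (f_1,\ldots,f_r)$ really is $(f_{1,i},\ldots,f_{r,i})$ on a neighborhood of $\ast$; this is immediate from the explicit construction in Proposition~\ref{prop:ultraideal}(a) applied to these specific generators.
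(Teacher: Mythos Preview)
Your proof is correct and follows exactly the paper's approach: the paper's proof is the single sentence ``This follows from Theorem~\ref{thm:codim in nhood} and Corollary~\ref{cor:rs},'' and you have simply unpacked that sentence carefully, including the bookkeeping about $I_i = (f_{1,i},\ldots,f_{r,i})$.
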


\begin{proof}
This follows from Theorem~\ref{thm:codim in nhood} and Corollary~\ref{cor:rs}. 
\end{proof}

\subsection{Stillman's conjecture}\label{ss:stillman}

\begin{theorem} \label{thm:reg}
Given positive integers $d_1, \ldots, d_r$ there exists an integer $N=N(d_1,\ldots,d_r)$ with the following property. If $f_1, \ldots, f_r$ are homogeneous elements of $\bk[x_1, \ldots, x_n]$, for any perfect field $\bk$ and any $n$, of degrees $d_1, \ldots, d_r$ and collective strength at least $N$ then $f_1, \ldots, f_r$ form a regular sequence.
\end{theorem}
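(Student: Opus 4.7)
The plan is to argue by contradiction using the ultraproduct machinery already developed. Suppose no such $N$ exists. Then for each $i \in \bN$ we can choose a perfect field $\bk_i$, an integer $n_i$, and homogeneous polynomials $f_{1,i}, \ldots, f_{r,i} \in \bk_i[x_1,\ldots,x_{n_i}]$ of degrees $d_1, \ldots, d_r$ whose collective strength is at least $i$ and which fail to form a regular sequence. Fix a non-principal ultrafilter on $\bN$, view each $f_{j,i}$ as an element of $R_i = \bk_i[x_1,x_2,\ldots]$, and form the graded ultraproduct $\bS$. For each $j$, the sequence $(f_{j,i})_{i \in \bN}$ determines a homogeneous element $f_j \in \bS$ of degree $d_j$.

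Next I invoke the structural results from \S\ref{s:ultra}. Since the collective strength of the $f_{j,i}$ tends to infinity, Proposition~\ref{prop:str} shows that $f_1, \ldots, f_r$ have infinite collective strength in $\bS$. By Theorem~\ref{introthm2} (i.e., the main theorem of \S\ref{ss:ultramain}), the ring $\bS$ is a polynomial ring. Using Example~\ref{ex:strength}(c), the ideal of finite strength elements in $\bS$ is exactly $\bS_+^2$, so infinite collective strength means that $f_1, \ldots, f_r$ are $\bK$-linearly independent in $\bS_+/\bS_+^2$, where $\bK$ is the ultraproduct of the $\bk_i$. Extending $\{f_1, \ldots, f_r\}$ to a basis of $\bS_+/\bS_+^2$ and invoking the ``precisely'' clause of Theorem~\ref{polycrit}, we see that $f_1, \ldots, f_r$ appear as part of a set of independent polynomial generators for $\bS$. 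Consequently they form a regular sequence in $\bS$.

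Now I push this conclusion back down to the individual $R_i$ by Corollary~\ref{cor:reg seq ultra}: the $f_{1,i}, \ldots, f_{r,i}$ form a regular sequence in $R_i$ for all $i$ in some neighborhood of the ultrafilter point. Finally, since each $f_{j,i}$ actually lives in the subring $\bk_i[x_1,\ldots,x_{n_i}]$, Corollary~\ref{cor:rs} (together with the codimension-preservation statement in Proposition~\ref{prop:codimex}) ensures that the regular sequence property descends from $R_i$ to $\bk_i[x_1,\ldots,x_{n_i}]$ as well. This contradicts the choice of the $f_{j,i}$ as non-regular sequences and completes the proof.

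The whole argument is essentially assembly, and there is no single hard step because each ingredient has been proved in \S\ref{s:poly}--\S\ref{s:ultra}. The most delicate point to articulate carefully is the passage from ``$f_1, \ldots, f_r$ part of a polynomial generating set of $\bS$'' to ``$f_1, \ldots, f_r$ is a regular sequence in $\bS$'', but this is immediate once $\bS$ is identified as a polynomial ring in variables that include the $f_j$. Everything else is bookkeeping to ensure that the ultraproduct formalism correctly transports the regular sequence property between $\bS$, the $R_i$, and the original finite polynomial rings.
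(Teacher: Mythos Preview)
Your argument is correct and follows the same route as the paper's: assume failure, pass to the graded ultraproduct $\bS$, use Proposition~\ref{prop:str} to get infinite collective strength, use polynomiality of $\bS$ to conclude the $f_j$ are part of a variable set (hence a regular sequence), and invoke Corollary~\ref{cor:reg seq ultra} for the contradiction. The only cosmetic differences are that the paper applies Corollary~\ref{cor:reg seq ultra} in the converse direction (concluding the $f_j$ are \emph{not} a regular sequence in $\bS$ and contradicting polynomiality directly), and that you spell out the descent from $R_i$ to $\bk_i[x_1,\ldots,x_{n_i}]$, which the paper leaves implicit.
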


\begin{proof}
Suppose that the theorem is false. Then for each $i\in \bN$, we can find $f_{1,i}, \dots,f_{r,i} \in \bk_j[x_1,x_2,\dots]$, with $\bk_i$ perfect, which fail to form a regular sequence and where the collective strength goes to $\infty$ as $i\to \infty$.  Choosing $\cI=\bN$, we let $f_1=(f_{1,i}), \dots, f_r=(f_{r,i})$ be the corresponding collection in $\bS$.  By Proposition~\ref{prop:str}, $f_1,\dots,f_r$ have infinite collective strength.  However, by Corollary~\ref{cor:reg seq ultra}, $f_1,\dots,f_r$ fail to form a regular sequence.  This contradicts Theorem~\ref{polycrit}.
\end{proof}

For completeness, we now illustrate how Theorem~\ref{thm:reg} implies the existence of small subalgebras and Stillman's conjecture. This implication is 
essentially the same as in \cite{ananyan-hochster}.

\begin{theorem} \label{thm:smallalg}
Given positive integers $d_1, \ldots, d_r$ there exists an integer $s=s(d_1,\ldots,d_r)$ with the following property. If $f_1, \ldots, f_r$ are homogeneous elements of $\bk[x_1, \ldots, x_n]$, for any perfect field $\bk$ and any $n$, with $\deg(f_i)=d_i$, then:
\begin{enumerate}[\rm \indent (a)]
	\item  There exists a regular sequence $g_1, \ldots, g_s$ in $\bk[x_1, \ldots, x_n]$, where each $g_i$ is homogeneous of degree at most $\max(d_1, \ldots, d_r)$, such that $f_1, \ldots, f_r$ are contained in the subalgebra $\bk[g_1, \ldots, g_s]$.
	\item  The ideal $( f_1,\ldots,f_r) $ has projective dimension at most $s$.
\end{enumerate}
\end{theorem}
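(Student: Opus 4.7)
The plan is to derive part (a) from Theorem~\ref{thm:reg} by an iterative replacement procedure, and then to deduce (b) from (a) via a flatness argument.

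For part (a), set $D := \max_i d_i$, start with $G := \{f_1, \ldots, f_r\}$, and maintain throughout the iteration the invariants that every element of $G$ is homogeneous of degree at most $D$ and $\bk[f_1, \ldots, f_r] \subseteq \bk[G]$. At each step I check the collective strength of $G$: if it is at least the bound $N$ supplied by Theorem~\ref{thm:reg} applied to the current tuple of degrees of $G$, then $G$ is a regular sequence and the process halts. Otherwise there is a nontrivial homogeneous $\bk$-linear combination $\sum_i c_i g_i = \sum_{k=1}^{K} h_k h'_k$ with $K < N$ and each $h_k, h'_k$ of positive degree strictly less than the common degree of the combination; solving for some $g_\ell$ with $c_\ell \ne 0$ lets me remove $g_\ell$ from $G$ and adjoin all the $h_k, h'_k$, producing a new $G$ whose multiset of degrees is strictly smaller (one entry $d$ has been replaced by finitely many entries $<d$). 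Since the multiset ordering on finite multisets of positive integers is well-founded, the iteration terminates in a regular sequence $g_1, \ldots, g_s$ of degrees $\le D$ with $\bk[f_1, \ldots, f_r] \subseteq \bk[g_1, \ldots, g_s]$.

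For part (b), any homogeneous regular sequence in $R := \bk[x_1, \ldots, x_n]$ is algebraically independent over $\bk$, so $A := \bk[g_1, \ldots, g_s]$ is a polynomial ring in $s$ variables. The standard graded criterion for flatness---a finitely generated graded module $M$ over a graded polynomial ring $B$ with $B_0 = \bk$ is flat if and only if a set of generators of $B_+$ is $M$-regular, as detected by the Koszul complex---gives that $R$ is flat over $A$. Hilbert's syzygy theorem bounds the projective dimension of $I := (f_1, \ldots, f_r) \subseteq A$ by $s$, and flat base change along $A \to R$ preserves projective dimension of a finitely generated ideal, so $IR$ has projective dimension at most $s$ over $R$.

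The hard part will be the uniform bound on $s = s(d_1, \ldots, d_r)$ in (a): termination in each instance is immediate from well-foundedness, but producing a bound depending only on the initial degrees---and not on the particular $f_i$, the field $\bk$, or the ambient polynomial ring---requires more work. One clean route is non-constructive: were no such uniform bound to exist, one could take the ultraproduct of a sequence of counterexamples to produce $f_1, \ldots, f_r$ in a ring $\bS$ that, by Theorem~\ref{introthm2}, is a polynomial ring; the $f_i$ would then lie in a finitely generated polynomial subring of $\bS$ whose generators have degree $\le D$, and Corollary~\ref{cor:reg seq ultra} together with a transfer of the polynomial expressions back to the factors $R_i$ would contradict the assumed unboundedness. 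A direct combinatorial bound is also possible but requires tracking how populations of elements of each degree evolve across the iteration.
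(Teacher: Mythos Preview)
Your iterative replacement procedure for (a) and the flatness argument for (b) are both correct and match the paper's approach. The one substantive difference is how you obtain the \emph{uniform} bound $s(d_1,\ldots,d_r)$. The paper does this combinatorially, and it is simpler than your phrasing suggests: encode the degree multiset $\bd$ of the current $G$ as a monomial $y(\bd)=\prod_j y_j^{b_j}$, where $b_j$ counts elements of degree $j$. The crucial point is that the replacement step uses only the bound $N=N(\bd)$ from Theorem~\ref{thm:reg}, which depends solely on $\bd$; hence the set of possible successor multisets $\bd'$ is finite and determined by $\bd$ alone. The resulting tree of degree multisets is therefore finitely branching, and by your own well-foundedness observation it has no infinite paths, so by K\"onig's lemma it is finite. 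Taking $s$ to be the maximum size of any multiset appearing in this tree gives the uniform bound. Your ultraproduct alternative also works: the $f_j\in\bS$ have degree $\le D$, so they lie in $\bK[v_1,\ldots,v_t]$ for finitely many ``variables'' $v_j$ of degree $\le D$; Corollary~\ref{cor:reg seq ultra} makes the $v_{j,i}$ a regular sequence near $\ast$, and transferring the finite polynomial identities $f_k=P_k(v_1,\ldots,v_t)$ back to the factors contradicts $s_i\to\infty$. This is a legitimate compactness proof, but it is non-constructive and reuses heavier machinery, whereas the paper's tree argument is elementary and in principle effective.
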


\begin{proof}
(a)  To each sequence $\bd=(d_1,\dots,d_r)$ we attach a monomial $y(\bd)=y_1^{b_1}y_2^{b_2}\cdots$ where $b_j$ is the number of times $j$ appears in $\bd$.  If there is an ideal $(f_1,\dots,f_r)$ of type $\bd$ that fails to be a regular sequence, then by Theorem~\ref{thm:reg} there is some $N$, depending only on $\bd$, such that some $\bk$-linear homogeneous combination of the $f_i$ has strength $\leq N$. Without loss of generality, we may replace one of our elements with this linear combination, and call it $f_i$. Taking $f_i=\sum_{j=1}^N a_jg_j$, and replacing $f_i$ by the $a_j$ and the $g_j$, we get an ideal of type $\bd'$ and where $y(\bd)<y(\bd')$ in the lexicographic order (but where the variables are checked in reverse order), and where the difference in total degree is at most $2N-1$.  In particular, given $y(\bd)$ there are only a finite number of possible monomials $y(\bd')$ that could arise in this way.  The descendants of $y(\bd)$ thus form a tree with finitely many branches at each node and with no infinite chains, and there are thus only finitely many descendants of $y(\bd)$.  Letting $s$ be the max total degree of a descendant of $y(\bd)$, it follows that $f_1,\dots,f_r$ can be embedded in a subalgebra $\bk[g_1,\dots,g_s]$ where the $g_i$ form a regular sequence.

(b) Choose $g_1,\dots,g_s$ as in (a). Since $g_1,\dots,g_s$ form a regular sequence, the extension $\bk[g_1,\dots,g_s]\subseteq \bk[x_1,\dots,x_n]$ is flat.  Thus, if $G$ is the minimal free resolution of $(f_1,\dots,f_r)$ over $\bk[g_1,\dots,g_s]$, then the extension of $G$ to $\bk[x_1,\dots,x_n]$ is the minimal free resolution of this ideal over $\bk[x_1,\dots,x_n]$.  In particular, the projective dimension of $(f_1,\dots,f_r)$ is $\leq s$.  
\end{proof}

\section{The inverse limit ring} \label{s:limit}

\subsection{Inverse limit polynomial ring}\label{subsec:inv}

Recall that $A\vars$ denotes the inverse limit of the standard-graded polynomial rings $A[x_1,\ldots,x_n]$ in the category of graded rings. We let $K$ denote a ring containing $A$, and we write $\alpha_n \colon K \otimes_A A\vars \to K[x_1,\dots,x_n]$ for the natural surjection. We set $\bR = K \otimes_A A\vars$.

The following hypothesis will be used repeatedly.

\begin{hypothesis}\label{defn:bR}
$A$ is an integral domain with fraction field $K$. If the characteristic $p$ of $K$ is positive, we assume furthermore that the Frobenius map on $A$ is surjective (so that $K$ is perfect) and that $K$ is infinite.
\end{hypothesis}

\begin{remark}\label{rmk:replace}
If $A$ is normal and its fraction field $K$ is perfect, then because $a^{1/p}$ satisfies the integral equation $x^p-a$, it lies in $A$.
Thus, we can often arrange to satisfy Hypothesis~\ref{defn:bR} by replacing $A$ with its integral closure in an algebraic closure of $K$.
\end{remark}

The following is an analogue of Theorem~\ref{thm:S-poly-ring} and Corollary~\ref{cor:reg seq ultra}.  It implies Theorem~\ref{introthm1}.

\begin{theorem}\label{thm:inverse is poly}
  Suppose Hypothesis~\ref{defn:bR} holds (except we do not require $K$ to be infinite). Then $\bR$ is a polynomial ring.
\end{theorem}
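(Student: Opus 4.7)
The plan is to apply the polynomiality criterion of Theorem~\ref{polycrit}. Observe that $\bR_0 = K$ is perfect: in characteristic zero this is automatic, and in positive characteristic the surjectivity of Frobenius on $A$ extends to $K$ via $a/b = (a^{1/p}/b^{1/p})^p$. It therefore suffices to exhibit enough (Hasse) derivations on $\bR$ of negative degree, coming from the variables $x_i$.

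First I would construct the relevant operators. For each $i$, the partial derivative $\partial_i = \partial/\partial x_i$ on $A[x_1,\ldots,x_n]$ (in characteristic zero) and the Hasse derivative $\partial_i^{\bullet}$ of Example~\ref{ex:hasse} (in positive characteristic) are both $A$-linear and act term-wise on monomials. Since each lowers degree by a fixed amount per application, they extend along the inverse limit to operators on $A\vars$, and then by $K$-linearity to homogeneous (Hasse) derivations on $\bR$ of degree~$-1$.

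Next I would check the enough-(Hasse)-derivations condition. Given nonzero homogeneous $f \in \bR_+$, clear denominators to write $g = sf \in A\vars$ with $s \in A \setminus \{0\}$. In characteristic zero, $g$ contains some monomial $c x^\alpha$ with $c \neq 0$ and, since $\deg g > 0$, some $\alpha_i \geq 1$; the coefficient of $x^{\alpha - e_i}$ in $\partial_i(g)$ is then $\alpha_i c \neq 0$, so $\partial_i(f) \neq 0$. In positive characteristic, I would instead prove the contrapositive: assuming $\partial_i^1(f) = 0$ for every $i$, show $f \in K \bR^p$. Indeed, $\partial_i^1(g) = 0$ forces every monomial $c x^\alpha$ in $g$ to satisfy $p \mid \alpha_i$, so the entire support of $g$ consists of $p$-th power monomials $x^{p\beta}$; using the surjectivity of Frobenius on $A$ from Hypothesis~\ref{defn:bR}, each coefficient $c \in A$ admits a $p$-th root $c' \in A$, and the element $h = \sum c' x^{\beta} \in A\vars$ satisfies $h^p = g$. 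Thus $f = h^p / s \in K \cdot \bR^p$, as required.

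The main subtlety will be this last identification $h^p = g$ in the positive-characteristic step, which depends on the Frobenius on $A\vars$ being computed term-wise on possibly infinite formal sums; this reduces immediately to its term-wise behavior on each finite polynomial ring $A[x_1,\ldots,x_n]$ together with commutativity of Frobenius with the inverse-limit structure maps. Once the enough-(Hasse)-derivations criterion is verified, Theorem~\ref{polycrit} yields the conclusion directly.
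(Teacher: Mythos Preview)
Your proposal is correct and follows essentially the same approach as the paper: both invoke Theorem~\ref{polycrit}, extend the (Hasse) derivatives in the $x_i$ to $\bR$, and in positive characteristic argue that if all $\partial_i^1$ vanish then the element lies in $K\bR^p$ by clearing denominators and using surjectivity of Frobenius on $A$. The only cosmetic difference is that the paper writes $f=g/a$ and takes $p$th roots of both $g$ and $a$ to conclude $f\in\bR^p$, whereas you stop at $f=h^p/s\in K\bR^p$; since $K$ is perfect these are equivalent, and Definition~\ref{defn:enough2} only requires the latter.
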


\begin{proof}
We use the criteria of \S \ref{s:poly}.  If $p=0$, then the partial derivatives $\frac{d}{dx_i}$ show that $\bR$ has enough derivations.

Now suppose that $p>0$. We claim that the Hasse derivatives corresponding to $\frac{d}{dx_i}$ (Example~\ref{ex:hasse}) provide $\bR$ with enough Hasse derivations. Let $f \in \bR$ be such that $\frac{d}{dx_i} f = 0$ for all $i$. This implies that $f \in K\otimes_A A\invl x_1^p,x_2^p,\dots \invr$. In particular, we can write $f = g/a$ where $a \in A$ and $g \in A\invl x_1^p, x_2^p, \dots \invr$. Since the Frobenius map is surjective on $A$ and $K$, both $g$ and $a$ are $p$th powers, which implies that $f$ is also a $p$th power.
\end{proof}

\begin{remark} \label{rmk:relax2}
The perfectness hypothesis in Theorem~\ref{thm:inverse is poly} can be relaxed. For example, suppose $\bk$ is a field of characteristic~$p$ such that $\bk$ is a finite extension of the subfield $\bk^p$, and let $R=\bk \vars$. Then $\bk R^p$ consists exactly of all (possibly infinite) $\bk$-linear combinations of $p$th powers of monomials; this uses the hypothesis on $\bk$. Thus if $f \not\in \bk R^p$ then some Hasse derivative will not kill $f$, and so $R$ has enough derivations, and is thus a polynomial ring by Remark~\ref{rmk:relax}.
\end{remark}

\begin{theorem} \label{thm:codim in truncations}
Suppose Hypothesis~\ref{defn:bR} holds.  Let  $f_1,\dots,f_s\in \bR$ and let $I=(f_1,\dots,f_s)$.  
\begin{enumerate}[\rm \indent (a)]
	\item  For any $n\gg 0$, we have that $\codim_\bR(I) = \codim_{K[x_1,\dots,x_n]}(\alpha_n(I))$.
	\item  The sequence $f_1,\dots,f_s$ forms a regular sequence if and only if $\alpha_n(f_1),\dots,\alpha_n(f_s)$ forms a regular sequence for all $n\gg 0$.
	\item  If $\alpha_n(f_1),\dots,\alpha_n(f_s)$ forms a regular sequence for some $n$, then $\alpha_m(f_1),\dots,\alpha_m(f_s)$ forms a regular sequence for all $m\geq n$.
\end{enumerate}
\end{theorem}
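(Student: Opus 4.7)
The cleanest order is to first prove (c) directly, then prove (a) by induction on codimension, and finally derive (b) from (a). For (c), the hypothesis gives $\codim_{K[x_1,\dots,x_n]}(\alpha_n I)=s$ by Corollary~\ref{cor:rs}; the identification $K[x_1,\dots,x_m]/(\alpha_m I + (x_{n+1},\dots,x_m)) = K[x_1,\dots,x_n]/\alpha_n I$ shows that $\alpha_m I + (x_{n+1},\dots,x_m)$ has codimension $s+(m-n)$ in $K[x_1,\dots,x_m]$, so Krull's height theorem forces $\codim(\alpha_m I)\ge s$; the reverse inequality is immediate, and Corollary~\ref{cor:rs} promotes equality to the desired regular sequence. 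Part (b) then follows from (a) combined with Corollary~\ref{cor:rs} applied in both $\bR$ and $K[x_1,\dots,x_n]$.

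For (a), I induct on $c = \codim_\bR(I)$. The base $c=0$ is trivial since $\bR$ is a domain. The upper bound $\codim_{K[x_1,\dots,x_n]}(\alpha_n I)\le c$ holds unconditionally: since $\bR$ is a polynomial ring (Theorem~\ref{thm:inverse is poly}) in which we may include $x_1,x_2,\dots$ among the polynomial variables (as these are linearly independent in $\bR_+/\bR_+^2$ by Example~\ref{ex:strength}), the generators of $I$ lie in a finite polynomial subring $B_n = K[x_1,\dots,x_n,z_1,\dots,z_L]$ for $n$ sufficiently large, with $\codim_{B_n}(I_{B_n})=c$ by Proposition~\ref{prop:codimex}. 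The kernel of $\alpha_n|_{B_n}$ has the form $(z_1-p_1,\dots,z_L-p_L)$ for certain positive-degree polynomials $p_l\in K[x_1,\dots,x_n]$. A minimal homogeneous prime $\fq\supset I_{B_n}$ of codimension $c$ gives a homogeneous proper ideal $\fq+(z_l-p_l)_l$ whose minimal primes in $B_n$ have codimension at most $c+L$; under $\alpha_n|_{B_n}$ these correspond to primes of $K[x_1,\dots,x_n]$ above $\alpha_n I$ of codimension at most $c$.

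For the lower bound, in the inductive step I pick a nonzero homogeneous $f\in I$ and, after possibly enlarging $K$ by a finite extension (which preserves codimensions by Proposition~\ref{prop:finflat}), construct an automorphism $\gamma$ of $\bR$ making $\gamma(f)$ monic in $x_1$. The main obstacle is that Lemma~\ref{lem:monic} only applies to finite polynomial rings, whereas $f\in\bR$ typically involves infinitely many variables; the resolution is that a finite-support shearing $\gamma(x_i)=x_i-a_i x_1$ for $2\le i\le M$ (with $\gamma(x_i)=x_i$ otherwise) extends to an automorphism of $\bR$, and only monomials of $f$ supported in $\{x_1,\dots,x_M\}$ contribute to the $x_1^d$-coefficient of $\gamma(f)$, so applying Lemma~\ref{lem:monic} to $\alpha_M(f)\in K[x_1,\dots,x_M]$ for $M$ large enough produces suitable $a_i$'s, using $\#K>\deg f$ (ensured by the preliminary extension). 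Writing $\bR = \bR'[x_1]$ where $\bR':=K\otimes_A A\invl x_2,x_3,\dots\invr$ satisfies Hypothesis~\ref{defn:bR}, the ideal $J:=\bR'\cap \gamma(I)$ is finitely generated by Proposition~\ref{prop:fgc} and has codimension $c-1$ by Corollary~\ref{cor:contract}. The inductive hypothesis applied to $\bR'$ gives codimension $c-1$ for the image of $J$ in $K[x_2,\dots,x_n]$ when $n\gg 0$, and since this image lies in $\alpha_n(\gamma(I))\cap K[x_2,\dots,x_n]$ with $\alpha_n(\gamma(f))$ monic in $x_1$, Corollary~\ref{cor:contract} yields $\codim(\alpha_n(\gamma(I)))\ge c$. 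Finally, since $\gamma$ has finite support, for $n$ large $\alpha_n\gamma=\bar\gamma\alpha_n$ where $\bar\gamma$ is the induced automorphism of $K[x_1,\dots,x_n]$, giving $\codim(\alpha_n I) = \codim(\alpha_n(\gamma(I))) \ge c$.
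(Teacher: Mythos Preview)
Your proof is correct and follows the same inductive strategy as the paper for part~(a) (reduce via a monic element in $x_1$ to a contraction in one fewer variable), with parts~(b) and~(c) handled identically. The one substantive difference is your separate upper-bound argument $\codim_{K[x_1,\dots,x_n]}(\alpha_n I)\le c$: you invoke Theorems~\ref{thm:inverse is poly} and~\ref{polycrit} to realize $\bR$ as a polynomial ring having $x_1,x_2,\dots$ among its variables, place the generators in a finite subring $B_n=K[x_1,\dots,x_n,z_1,\dots,z_L]$, and compare codimensions across the surjection $\alpha_n|_{B_n}$ whose kernel is the regular sequence $(z_l-p_l)_l$. The paper does not do this; it simply asserts that after arranging $f$ monic, ``the rest of the proof is essentially identical to the proof of Theorem~\ref{thm:codim in nhood}.'' But that ultraproduct proof obtains \emph{equality} in one stroke via Proposition~\ref{prop:ultracontract}, and the inverse-limit analogue---that $\alpha'_n(\bR'\cap\gamma I)=K[x_2,\dots,x_n]\cap\alpha_n(\gamma I)$ for $n\gg 0$---is neither stated nor proved. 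Your two-inequality approach sidesteps this issue cleanly, at the cost of the extra $B_n$ argument; what it buys is a genuinely self-contained proof of~(a).

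One place where you are less careful than the paper: when enlarging $K$ to ensure $\#K>\deg f$, you should note (as the paper does) that $\#K\le\deg f$ forces $K$, hence the domain $A$, to be finite, so $A=K$ and $\bR=K\vars$; then $K'\otimes_K\bR\cong K'\vars$ again satisfies Hypothesis~\ref{defn:bR}, so the inductive framework still applies after base change. Simply citing Proposition~\ref{prop:finflat} for codimension preservation is not enough, since you also need the extended ring to remain in the class where the induction is being run.
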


\begin{proof}
(a) We prove this by induction on $c=\codim_\bR(I)$, which is finite by Corollary~\ref{cor:fincodim}. If $c=0$ then $I=0$ and the statement is immediate.  Now let $c>0$ and pick $f \in I$ nonzero.  Let $n$ large enough so that $\alpha_n(f)\ne 0$.  Since $K$ is infinite, there is a graded $K$-algebra automorphism $\gamma$ of $K[x_1,\dots,x_n]$ such  that $\gamma \alpha_n(f)$ is monic over $K[x_2,\dots,x_n]$ (see \cite[Lemma~13.2(c)]{eisenbud} and its proof).  If $\gamma'$ is the automorphism of $\bR$ which acts by $\gamma$ on $x_1,\dots,x_n$ and which acts trivially on the other $x_i$, then $\alpha_n(\gamma' f)=\gamma \alpha_n f$.  We may thus assume that $f$ is monic over $K\otimes_A A\invl x_2,x_3,\dots \invr$. 
The rest of the proof is essentially identical to the proof of Theorem~\ref{thm:codim in nhood}.

(b) This is an immediate consequence of (a) and Corollary~\ref{cor:rs}.

(c)  By Corollary~\ref{cor:rs}, we have $\codim \alpha_n(I)=s$ and it suffices to show that $\codim \alpha_{n+1}(I)=s$.  Since $K[x_1,\dots,x_{n+1}]/(\alpha_{n+1}(I)+(x_{n+1}))$ is isomorphic to $K[x_1,\dots,x_n]/\alpha_n(I)$, the principal ideal theorem implies that $\codim \alpha_{n+1}(I)$ is either $s$ or $s+1$.
However, $\alpha_{n+1}(I)$ is generated by $s$ elements, so its codimension is at most $s$.  Thus $\codim \alpha_{n+1}(I)=s$.
\end{proof}

\begin{definition}\label{defn:restriction to point}
Fix a ring $A$, a field $\bk$, and a point $y\in\Spec(A)(\bk)$. For $f\in A\vars$, we let $f_y$ denote the image of $f$ in $\bk\vars$.  Similarly, for an $A\vars$-module $M$, we let $M_y=\bk\vars\otimes_{A\vars} M$.

If instead $f\in A[x_1,\dots,x_n]$, then we let $f_y$ denote its image in $\bk[x_1,\dots,x_n]$.  If $M$ is an $A[x_1,\dots,x_n]$-module, then we let $M_y=\bk[x_1,\dots,x_n]\otimes_{A[x_1,\dots,x_n]}M$.
\end{definition}

\begin{remark}
We note that $\bk\otimes_A A\vars$ is not generally isomorphic to $\bk\vars$: $\bk \otimes_A A \vars$ consists of those infinite series in $x_1,x_2,\dots$ whose coefficients have a common denominator. For example, if $A = \bZ$ and $\bk=\bQ$, then $\sum_{i \ge 1} x_i/i$ is an element of $\bQ\vars$ but is not an element of $\bQ \otimes \bZ\vars$.
\end{remark}

\begin{corollary}\label{cor:zariski condition}
Suppose Hypothesis~\ref{defn:bR} holds. Let $f_1,\dots,f_s \in A\vars$ be elements whose images in $\bR$ form a regular sequence. There exists a dense open set $U\subseteq \Spec(A)$ such that for any algebraically closed field $\bk$ and any $y\in U(\bk)$, the elements $f_{1,y},\dots,f_{s,y}$ form a regular sequence in $\bk\vars$.
\end{corollary}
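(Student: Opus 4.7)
The plan is to reduce, via Theorem~\ref{thm:codim in truncations}, to a statement about a finite Koszul complex over $A[x_1,\dots,x_n]$ and then apply generic freeness to spread the regular-sequence condition from the generic point of $\Spec(A)$ to a dense open subset.

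First, by Theorem~\ref{thm:codim in truncations}(b), we choose $n$ so that the truncations $\tilde f_i := \alpha_n(f_i)$ form a regular sequence in $K[x_1,\dots,x_n]$; these are the images of the natural truncations $\tilde f_i \in A[x_1,\dots,x_n]$ obtained by setting $x_j=0$ for $j>n$. Let $\mathcal{K}_\bullet$ be the Koszul complex of $\tilde f_1,\dots,\tilde f_s$ over $A[x_1,\dots,x_n]$, a bounded complex of finite rank free modules. To invoke generic freeness, pass to a finitely generated (hence noetherian) $\bZ$-subalgebra $A_0 \subseteq A$ containing the finitely many coefficients of the $\tilde f_i$; writing $K_0$ for the fraction field of $A_0$, the extension $K_0\hookrightarrow K$ is faithfully flat, so the Koszul complex over $K_0[x_1,\dots,x_n]$ remains exact in positive degrees, and hence $H_i(\mathcal{K}_\bullet)\otimes_{A_0}K_0 = 0$ for $i\ge 1$. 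Each $H_i(\mathcal{K}_\bullet)$ is then a finitely generated $A_0[x_1,\dots,x_n]$-module, so applying generic freeness to $H_0,\dots,H_s$ and intersecting produces $0\ne g\in A_0$ such that each localization $H_i(\mathcal{K}_\bullet)_g$ is free over $A_{0,g}$. For $i\ge 1$, freeness combined with $H_i\otimes_{A_0}K_0=0$ forces $H_i(\mathcal{K}_\bullet)_g=0$; for $i=0$, we obtain $A_{0,g}$-flatness of $H_0(\mathcal{K}_\bullet)_g$.

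Set $U:=D(g)\subseteq \Spec(A)$, viewing $g$ as an element of $A$; this is nonempty and therefore dense. For any algebraically closed field $\bk$ and any $y\in U(\bk)$, the base change of $(\mathcal{K}_\bullet)_g$ along the induced map $A_{0,g}\to\bk$ is the Koszul complex of $\tilde f_{1,y},\dots,\tilde f_{s,y}$ over $\bk[x_1,\dots,x_n]$. Since the localized complex is a free $A_{0,g}$-resolution of the $A_{0,g}$-flat module $H_0(\mathcal{K}_\bullet)_g$, base change preserves its exactness in positive degrees, so $\tilde f_{1,y},\dots,\tilde f_{s,y}$ form a regular sequence in $\bk[x_1,\dots,x_n]$ by Corollary~\ref{cor:rs}. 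Since $\bk$ is algebraically closed (hence perfect), Hypothesis~\ref{defn:bR} holds for $\bk$, and Theorem~\ref{thm:codim in truncations}(c) followed by (b) upgrades this to the conclusion that $f_{1,y},\dots,f_{s,y}$ form a regular sequence in $\bk\vars$.

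The main obstacle is guaranteeing $A$-flatness of the Koszul homology modules over the possibly non-noetherian ring $A$; this is sidestepped by descending first to the noetherian subring $A_0$ where the standard generic freeness lemma applies, and then transporting the resulting open subset back to $\Spec(A)$.
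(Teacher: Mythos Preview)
Your proof is correct and shares the paper's overall architecture: truncate via Theorem~\ref{thm:codim in truncations}(b) to a finite polynomial ring, spread the regular-sequence condition from the generic fiber to a dense open, then lift back using Theorem~\ref{thm:codim in truncations}(c) and~(b). The difference lies entirely in the middle step. The paper argues geometrically: it sets $Q=A[x_1,\dots,x_n]/(\alpha_n(f_1),\dots,\alpha_n(f_s))$, observes that the generic fiber of $\Spec(Q)\to\Spec(A)$ has dimension $n-s$, and invokes upper semicontinuity of fiber dimension (which requires no noetherian hypothesis on $A$) to obtain the open set $U$ directly. You instead argue homologically via generic freeness of the Koszul homology, which forces you to descend to a finitely generated subring $A_0\subseteq A$ first. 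Your route is a bit longer but perfectly sound, and it has the pedagogical virtue of making the flatness/base-change mechanism explicit rather than hiding it inside a dimension-theoretic black box. One small quibble: your appeal to Corollary~\ref{cor:rs} at the end is slightly misdirected, since what you have actually established is acyclicity of the Koszul complex over $\bk[x_1,\dots,x_n]$; the implication from Koszul acyclicity to regularity of the sequence is used inside the \emph{proof} of Corollary~\ref{cor:rs} rather than being its statement, and it relies on the (implicit) homogeneity of the $f_i$.
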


\begin{proof}
By Theorem~\ref{thm:codim in truncations}, there is some $n$ so that $\alpha_n(f_1),\dots,\alpha_n(f_s)\in K[x_1,\dots,x_n]$ forms a regular sequence.  Let $g_i=\alpha_n(f_i)$, considered as an element of $A[x_1,\dots,x_n]$.  Let $Q=A[x_1,\dots,x_n]/(g_1,\dots,g_s)$ and let $\pi\colon \Spec(Q)\to \Spec(A)$. Since the generic fiber of $\pi$ has dimension $n-s$, it follows that the locus $U\subseteq \Spec(A)$ of points whose fibers have dimension $n-s$ is dense and Zariski open by semicontinuity of fiber dimension \stacks{05F6}.

Let $\bk$ be an algebraically closed field and let $y\in U(\bk)$.  
Since $\dim(Q\otimes_A \bk)=n-s$,
it follows that $g_{1,y},\dots,g_{s,y}$ forms a regular sequence.  But $g_{i,y}$ equals $\alpha_n(f_{i,y})$, and thus by Theorem~\ref{thm:codim in truncations}(c) and (b), we have that $f_{1,y},\ldots,f_{s,y} $ forms a regular sequence.
\end{proof}

\begin{lemma}\label{lem:flat extension}
If $\bk$ is a perfect field and $f_1,\dots,f_s\in \bk\vars$ is a regular sequence, then $i' \colon \bk[f_1,\dots,f_s]\to \bk\vars$ is faithfully flat.
\end{lemma}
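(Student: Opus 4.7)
The plan is to show that $\bR$ is a free graded module over the polynomial ring $A = \bk[y_1,\dots,y_s]$ (with $\deg y_i = \deg f_i$) via the $\bk$-algebra homomorphism $\varphi\colon A\to \bR$ sending $y_i\mapsto f_i$. Once this is established, $\varphi$ will be injective (since $A$ is a domain and $\bR$ is a nonzero free $A$-module), giving $A \cong \bk[f_1,\dots,f_s]$; furthermore $\bR\otimes_A\bk = \bR/(f_1,\dots,f_s)\bR \ne 0$ (the ideal is proper, being of finite codimension by Corollary~\ref{cor:fincodim}), so $\bR$ will be faithfully flat over $\bk[f_1,\dots,f_s]$.

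To prove the freeness, the two ingredients are the Koszul complex and graded Nakayama. Since $f_1,\dots,f_s$ is a regular sequence in $\bR$, the Koszul complex $K_\bullet(f_1,\dots,f_s;\bR)$ is acyclic in positive degrees. But this complex equals $K_\bullet(y_1,\dots,y_s;A)\otimes_A \bR$, and $K_\bullet(y_\bullet;A)$ is a free $A$-resolution of $\bk = A/A_+$, so $\operatorname{Tor}_i^A(\bk,\bR)=0$ for all $i>0$. Now choose a homogeneous $\bk$-basis $\{\bar g_\alpha\}$ of $\bR/(f_1,\dots,f_s)\bR$ and lift to homogeneous $g_\alpha \in \bR$; form the graded free $A$-module $F = \bigoplus_\alpha A\cdot e_\alpha$ with $\deg e_\alpha = \deg g_\alpha$, and the graded map $\psi\colon F\to \bR$ sending $e_\alpha \mapsto g_\alpha$. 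Graded Nakayama---which states that any bounded-below graded $A$-module $M$ satisfying $M = A_+M$ must vanish, via inspection of the smallest nonzero graded piece---first yields $\coker(\psi)=0$, since its reduction modulo $A_+=(f_1,\dots,f_s)$ is zero by construction. The vanishing of $\operatorname{Tor}_1^A(\bk,\bR)$ then makes the sequence $0\to \ker(\psi)\otimes_A\bk \to F\otimes_A\bk \to \bR\otimes_A\bk\to 0$ exact; the rightmost map is an isomorphism by our choice of basis, so $\ker(\psi)\otimes_A\bk=0$, and a second application of graded Nakayama gives $\ker(\psi)=0$. Thus $\psi$ is an isomorphism and $\bR\cong F$ is graded free over $A$.

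The main obstacle I expect is the correct application of graded Nakayama in this setting: the basis $\{\bar g_\alpha\}$ will typically be uncountable, so $F$ has uncountably many generators and classical Nakayama arguments that rely on finite generation are unavailable. The key observation that lets one proceed is that $F$ and $\ker(\psi)$ are both concentrated in nonnegative degrees (since $\bR$ is, and the $g_\alpha$ can be chosen of nonnegative degree), so the minimal-degree version of graded Nakayama applies without any finite-generation hypotheses.
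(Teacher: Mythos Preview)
Your proof is correct and takes a genuinely different route from the paper. The paper invokes Theorem~\ref{thm:inverse is poly} to identify $\bk\vars$ with a polynomial ring $\bk[\cV]$, passes to a finite subset $\cH\subset\cV$ containing the $f_i$, extends $f_1,\dots,f_s$ to a maximal regular sequence in $\bk[\cH]$, and then factors $i'$ as a composite of three maps each of which is free (the middle one by a result in Bruns--Herzog). Your argument instead works directly: from acyclicity of the Koszul complex you extract $\Tor_1^A(\bk,\bR)=0$, and then the graded Nakayama argument (which, as you correctly emphasize, needs only that everything is bounded below in degree, not finite generation) upgrades this to freeness. Your approach is more self-contained and strictly more general: it never uses that $\bR$ is a polynomial ring, nor that $\bk$ is perfect---it shows that for \emph{any} nonnegatively graded $\bk$-algebra $R$ with $R_0=\bk$, a regular sequence of positive-degree elements generates a polynomial subring over which $R$ is free. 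The paper's approach, by contrast, leverages the structural result it has already established and is shorter once that is in hand.

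One small cosmetic point: your parenthetical justification that $(f_1,\dots,f_s)$ is proper via Corollary~\ref{cor:fincodim} is not quite apt, since the paper only defines codimension for non-unital ideals in the first place. Simply note that the $f_i$ lie in $\bR_+$ (degree~$0$ elements of $\bR$ are scalars, and units cannot occur in a regular sequence), so the ideal is contained in $\bR_+$ and hence proper.
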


\begin{proof}
By Theorem~\ref{thm:inverse is poly}, we can write $\bk\vars \cong \bk[{\cV}]$. There is a finite subset ${\cH}\subseteq {\cV}$ such that ${f_1},\dots,{f_s}\in \bk[{\cH}]$.  Since the ${f_i}$ form a regular sequence, we can extend this to a maximal regular sequence, ${f_1},\dots,{f_s},g_1,\dots,g_{r}$ on $\bk[{\cH}]$.  The map $i'$ factors as
\[
\bk[{f_1},\dots,{f_s}]\overset{i_1}{\longrightarrow} \bk[{f_1},\dots,{f_s},g_1,\dots,g_{r}] \overset{i_2}{\longrightarrow} \bk[{\cH}] \overset{i_3}{\longrightarrow} \bk\vars.
\]
For each extension, the larger ring is free over the smaller ring.  Both $i_1$ and $i_3$ are extensions of polynomial rings.  For $i_2$, freeness follows from ~\cite[Proposition~2.2.11]{bruns-herzog} (the statement there is for a local ring, but the proof also works for a graded ring). 
\end{proof}

\subsection{Constant Betti tables over an open subset}\label{sec:inverse limit proof}
For a graded ring $R$ with $R_0=\bk$ a field, 
we set
 $\beta_{i,j}(M)=\dim_{\bk} \Tor^R_i(M,\bk)_j$.  The {\bf Betti table} of $M$ is the collection of all $\beta_{i,j}$.  

\begin{definition}\label{defn:constant betti}
Let $A$ be a commutative ring and let $U\subseteq \Spec(A)$ be a locally closed subset.  Let $M$ be a finitely presented, graded module over either $A\vars$ or over a polynomial ring over $A$.  We say that $M$ {\bf has a constant Betti table over $U$} if for every algebraically closed field $\bk$ and every $y\in U(\bk)$, the Betti table of $M_y$ is the same.
(Recall that $M_y$ is defined in Definition~\ref{defn:restriction to point}).
\end{definition}

The following lemma, which is likely known to experts, shows that a finitely presented module over a finite polynomial ring has a constant Betti table over an open subset.
\begin{lemma}\label{lem:finite betti}
Let $A$ be an integral domain and let $R=A[y_1,\dots,y_r]$ be a graded polynomial ring over $A$, with $\deg(y_i)\geq 1$ for $1\leq i \leq r$.  If $M'$ is a finitely presented, graded $R$-module, then $M'$ has a constant Betti table over some dense, open subset $U\subseteq \Spec(A)$.
\end{lemma}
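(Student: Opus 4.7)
The plan is to spread out the minimal graded free resolution of the generic fiber of $M'$ to an open subset of $\Spec(A)$. Let $K$ denote the fraction field of $A$, set $R_K := K \otimes_A R = K[y_1,\dots,y_r]$, and $M'_K := K \otimes_A M'$. Then $M'_K$ is a finitely generated graded $R_K$-module, so by the Hilbert syzygy theorem it admits a finite graded minimal free resolution $F_\bullet \to M'_K$. Minimality here means that every entry of every differential of $F_\bullet$ lies in the maximal homogeneous ideal $(y_1,\dots,y_r)R_K$.

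Next, I would clear denominators. Since $F_\bullet$ is finite, only finitely many matrix entries are involved, and a finite generating set of $M'_K$ may be lifted to $M'$. Hence there is a nonzero $a_0 \in A$ such that the whole picture descends to a complex $G_\bullet$ of finitely generated graded free modules over $R_{a_0} := A[a_0^{-1}][y_1,\dots,y_r]$, equipped with an augmentation $G_0 \to M'_{a_0}$ that base-changes to $F_\bullet \to M'_K$, and whose differentials have entries in $(y_1,\dots,y_r)R_{a_0}$.

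Now I would enlarge $a_0$ to a suitable multiple $a$ in two stages. First, $\operatorname{coker}(G_0 \to M'_{a_0})$ and the homology modules $H_i(G_\bullet)$ for $i \geq 1$ are finitely generated graded $R_{a_0}$-modules that vanish after tensoring with $K$, so each is annihilated by some nonzero element of $A$; inverting the product of these elements makes $G_\bullet \to M'_a \to 0$ exact. Second, applying generic flatness to the finitely many cycle modules $Z_i := \ker(G_i \to G_{i-1})$ for $i \geq 1$, to $Z_0 := \ker(G_0 \to M'_a)$, and to $M'_a$ itself (after reducing to the Noetherian case by replacing $A$ with a finitely generated subring over which everything is defined), I may enlarge $a$ further so that each of these modules is flat over $A[a^{-1}]$. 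Set $U := \Spec(A[a^{-1}])$, a dense open subset of $\Spec(A)$.

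It remains to verify constancy of the Betti table over $U$. Fix an algebraically closed field $\bk$ and a point $y \in U(\bk)$. The short exact sequences $0 \to Z_i \to G_i \to Z_{i-1} \to 0$ for $i \geq 0$ (with $Z_{-1} := M'_a$) remain exact after $\otimes_{A[a^{-1}]}\bk$, by the $A[a^{-1}]$-flatness of each $Z_{i-1}$. Splicing them back shows that $G_\bullet \otimes_{A[a^{-1}]} \bk$ is a graded free resolution of $M'_y$ over $\bk[y_1,\dots,y_r]$. Because the differentials of $G_\bullet$ have entries in $(y_1,\dots,y_r)R_a$, their specializations have entries in the maximal homogeneous ideal of $\bk[y_1,\dots,y_r]$, so the specialized resolution is minimal. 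Consequently $\beta_{i,j}(M'_y) = \beta_{i,j}(M'_K)$ for every $y \in U$. The principal subtlety is ensuring exactness survives specialization, which is precisely the role played by generic flatness of the intermediate cycle modules.
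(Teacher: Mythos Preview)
Your approach---spreading out the minimal free resolution of $M'_K$ to an open subset of $\Spec(A)$---is essentially the paper's. There is, however, one genuine gap in your execution. In your first stage you kill $\coker(G_0 \to M'_{a_0})$ and $H_i(G_\bullet)$ for $i \ge 1$, and then assert that the augmented complex $G_\bullet \to M'_a \to 0$ is exact. But you have not ensured exactness at $G_0$: that requires the image of $G_1 \to G_0$ to equal $Z_0 = \ker(G_0 \to M'_a)$, and nothing you have inverted forces this. Consequently your sequence $0 \to Z_1 \to G_1 \to Z_0 \to 0$ need not be short exact on the right, and the splicing argument in your final paragraph breaks down. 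The fix is easy: either also kill the homology of the \emph{augmented} complex at position $0$ (it too vanishes over $K$, since localization is exact), or---cleaner, and what the paper does---take the augmentation to be the canonical surjection $G_0 \to \coker(G_1 \to G_0)$ and use that this cokernel and $M'$, being finitely presented and isomorphic over $K$, become isomorphic over some $A_g$; then exactness at $G_0$ is automatic.

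Two smaller points. First, the Noetherian reduction you invoke only in stage two is already needed in stage one, both to know that the $H_i(G_\bullet)$ are finitely generated and to argue that a finitely generated module vanishing over $K$ is killed by a single nonzero element of $A$; it is cleanest to descend to a finitely generated subring of $A$ at the outset. Second, you should also arrange (by a further localization) that the composite $G_1 \to G_0 \to M'_{a_0}$ is zero before treating the augmented diagram as a complex. With these corrections your argument goes through; your use of generic flatness on the cycle modules $Z_i$ is a perfectly good alternative to the paper's route through generic freeness of the cokernel alone.
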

\begin{proof}
Let $K$ be the fraction field of $A$ and let $G'_K=[0\to G'_{K,p} \overset{\partial_p}{\to}\cdots  \overset{\partial_1}{\to} G'_{K,0}]$ be the minimal free resolution of $K\otimes_A M'$ over $K[y_1,\dots,y_r]$. Represent each $\partial_i$ by a matrix $\phi_i$.  The entries of each $\phi_i$ have positive degree and, by multiplying by an element in $A$ if needed, we may assume that the entries of each $\phi_i$ also lie in $R$.  These matrices can then be used to define a bounded, graded complex $G'$ of free $R$-modules.  
By construction, $K\otimes_A \coker(G'_1\to G'_0)$ is isomorphic to $K\otimes_AM'$.  Since both $M'$ and $\coker(G'_1\to G'_0)$ are finitely presented, this isomorphism extends to an isomorphism over $A_g$ for some $g\in A$.

Let $B$ be the subring of $A$ generated by all of the coefficients of the polynomials that appear as entries in the differentials $\partial_i$ and let $S = B[y_1,\dots,y_r]$. Then we can also use the $\partial_i$ to define a complex $H$ over $S$ with the property that $H \otimes_B A \cong G'$.

Define $N$ be the direct sum of the homology modules $\rH_i(H)$ for $0\leq i \leq p$ along with the images of the differentials $\partial_i$ for $i=1,\dots,p$. Since $N$ is a finitely generated module over the finitely presented extension $B\to B[y_1,\dots,y_r]$,~\stacks{051S} implies that there is some $h\in B$ such that $N_h$ is free over $B_h$. Working over $B_h$ and $A_h$, we see that base extension of $H_h$ to $A_h$ and also $K$ commutes with taking homology (due to flatness of the images of $\partial_i$ over $B_h$). But since they are free over $B_h$, this implies that the homology of both $H_h$ and $G'_h$ vanish in positive degrees.

In sum, if $f=gh$, then $G'_f$  is a free resolution of $M'_f$, and $M'_f$ is a flat $A_f$-module. Let $U=\Spec(A_f)$.  Let $\bk$ be a field and $y\in U(\bk)$.  Since $M'_f$ is flat over $A_f$, $\bk\otimes_{A_f} G'_f$ is a free resolution of $M'_y$.  The resolution is minimal since each entry of $\phi_i$ has positive degree, and this remains true under localization at $f$ and specialization to $\bk$.  The Betti table of $M'_y$ is thus determined by the free modules in $G'_f$, and so it does not depend on $y$.
\end{proof}

\begin{lemma}\label{lem:first step}
Suppose Hypothesis~\ref{defn:bR} holds. Let $M$ be a finitely presented, graded $A\vars$-module.  There exist:
\begin{enumerate}[\rm \indent (a)]
	\item  elements $f_1,\dots,f_s\in A\vars$ whose images in $\bR$ form a regular sequence, and
	\item  an element $g\in A$ and a finitely presented $A_g[f_1,\dots,f_s]$-module $M'$ such that the extension of $M'$ to $A_g\otimes_A A\vars$ is isomorphic to $A_g\otimes_A M$.
\end{enumerate}
\end{lemma}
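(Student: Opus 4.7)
The plan is to find a finite set of polynomial generators of $\bR$ whose images account for every entry of a presentation matrix of $M$, lift these generators to $A\vars$, and then descend the presentation after inverting one element of $A$.

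First, I would fix a finite presentation $A\vars^m \xrightarrow{\phi} A\vars^n \to M \to 0$ and let $a_1,\ldots,a_N \in A\vars$ denote the matrix entries of $\phi$. By Theorem~\ref{thm:inverse is poly}, $\bR$ is a polynomial ring over $K$ in some set of variables $\{y_\alpha\}$. Each $a_i$ viewed in $\bR$ is a polynomial expression in only finitely many of the $y_\alpha$'s, so there is a finite subset $y_{\alpha_1},\ldots,y_{\alpha_s}$ and polynomials $P_i \in K[Y_1,\ldots,Y_s]$ with $a_i = P_i(y_{\alpha_1},\ldots,y_{\alpha_s})$ in $\bR$.

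Next I would lift the chosen variables to $A\vars$. Since $A\vars$ is torsion-free over the domain $A$, the natural map $A\vars \hookrightarrow \bR$ identifies $\bR$ with the localization $(A\setminus\{0\})^{-1}A\vars$, and each $y_{\alpha_j}$ can be written as $f_j/g_j$ with $f_j \in A\vars$ and $g_j \in A\setminus\{0\}$. Choose $g \in A\setminus\{0\}$ large enough to invert every $g_j$ and to clear the denominators of the coefficients of every $P_i$; then $P_i(y_{\alpha_1},\ldots,y_{\alpha_s}) = Q_i(f_1,\ldots,f_s)/g^{e_i}$ for some $Q_i \in A_g[Y_1,\ldots,Y_s]$ and $e_i \in \bN$, so $g^{e_i} a_i = Q_i(f_1,\ldots,f_s)$ in $\bR$. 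Both sides lie in $A_g \otimes_A A\vars$, which injects into $\bR$ by torsion-freeness, so the identity already holds in $A_g \otimes_A A\vars$. Hence each $a_i$ belongs to $A_g[f_1,\ldots,f_s]$, the matrix $\phi$ descends to a map $\phi'\colon A_g[f_1,\ldots,f_s]^m \to A_g[f_1,\ldots,f_s]^n$, and I would set $M' = \mathrm{coker}(\phi')$. Extending scalars along $A_g[f_1,\ldots,f_s] \hookrightarrow A_g \otimes_A A\vars$ returns the base-change of $\phi$, yielding $M' \otimes_{A_g[f_1,\ldots,f_s]}(A_g \otimes_A A\vars) \cong A_g \otimes_A M$.

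Finally, since each $g_j \in A\setminus\{0\}$ is a unit in $\bR$, in $\bR$ we have $(f_1,\ldots,f_s) = (y_{\alpha_1},\ldots,y_{\alpha_s})$; the right-hand ideal has codimension $s$ (reduce to the finite polynomial subring via Proposition~\ref{prop:codimex}), so Corollary~\ref{cor:rs} implies that $f_1,\ldots,f_s$ form a regular sequence in $\bR$. The main obstacle is the uniform denominator clearing in the second paragraph: a single $g \in A$ must simultaneously invert every $g_j$, clear the coefficients of every $P_i$, and upgrade each equality $g^{e_i} a_i = Q_i(f_1,\ldots,f_s)$ from $\bR$ to the subring $A_g \otimes_A A\vars$. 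Finite presentation of $M$ bounds the number of identities to clear, so such a $g$ exists.
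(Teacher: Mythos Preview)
Your proof is correct and follows essentially the same route as the paper: pick a presentation matrix, use Theorem~\ref{thm:inverse is poly} to express its entries as polynomials in finitely many of the polynomial generators of $\bR$, clear denominators to land in $A_g[f_1,\ldots,f_s]$, and define $M'$ as the cokernel there. The only cosmetic difference is that the paper rescales the entire generating set $\cU$ into $A\vars$ at the outset (so that the $f_j$ are literally elements of $\cU$ and hence visibly a regular sequence), whereas you keep the original $y_{\alpha_j}$ and rescale only the finitely many needed ones, then invoke Corollary~\ref{cor:rs} via $(f_1,\ldots,f_s)=(y_{\alpha_1},\ldots,y_{\alpha_s})$; both arguments are equivalent.
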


\begin{proof}
Let $\cU$ be a set of homogeneous elements of $\bR_+$ such that $\bR=K[\cU]$.  Since any $f\in \bR$ can be written as a fraction with numerator in $A\vars$ and denominator in $A$, we may rescale each element of $\cU$ so that it lies in $A\vars$.  Since $\bR=K[\cU]$ is a polynomial ring, for any element $f\in \bR$, there is a finite subset $\cU' \subseteq \cU$, and an element $\gamma \in A$ such that $f$ lies in $A_\gamma[\cU']$.  The same holds for any finite collection of elements in $\bR$.  

Let $\Phi$ be a finite presentation matrix of $M$. By the above discussion, we can find distinct elements $f_1,\dots,f_s\in \cU$ and $g\in A$ such that each entry of $\Phi$ lies in $A_g[f_1,\dots,f_s]$.  Let $\Phi'$ be the same matrix as $\Phi$, but considered as a map of graded, free $A_g[f_1,\dots,f_s]$-modules and let $M'$ be the cokernel of $\Phi'$.  By construction, the extension of $M'$ to $A_g\otimes_A A\vars$ is isomorphic to $A_g\otimes_A M$. The elements $f_1, \ldots, f_s \in \bR$ form a regular sequence as they are ``variables'' (elements of $\cU$).
\end{proof}

\begin{theorem}\label{thm:open subset}
Suppose Hypothesis~\ref{defn:bR} holds. If $M$ is a finitely presented, graded $A\vars$-module, then $M$ has a constant Betti table over some dense open subset $U\subseteq \Spec(A)$.
\end{theorem}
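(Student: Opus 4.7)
The plan is to spread $M$ out over a finitely generated polynomial subring of $\bR$, invoke the classical constancy result (Lemma~\ref{lem:finite betti}) there, and then transfer the conclusion back along a faithfully flat base change at each fiber. Concretely, first I would apply Lemma~\ref{lem:first step} to produce elements $f_1,\dots,f_s\in A\vars$ whose images in $\bR$ form a regular sequence, an element $g\in A$, and a finitely presented graded $A_g[f_1,\dots,f_s]$-module $M'$ whose extension to $A_g\otimes_A A\vars$ is isomorphic to $A_g\otimes_A M$. Since $\bR$ is a polynomial ring by Theorem~\ref{thm:inverse is poly} and $f_1,\dots,f_s$ form part of a polynomial generating set, they are algebraically independent in $\bR$, so $A_g[f_1,\dots,f_s]$ is an honest graded polynomial ring in $s$ variables over $A_g$.

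Next I would apply Lemma~\ref{lem:finite betti} to $M'$ over $A_g[f_1,\dots,f_s]$ to obtain a dense open $U_1\subseteq\Spec(A_g)$ on which the Betti table of $M'_y$, as a graded $\bk[f_1,\dots,f_s]$-module, is constant for all algebraically closed $\bk$ and $y\in U_1(\bk)$. In parallel, Corollary~\ref{cor:zariski condition} applied to $f_1,\dots,f_s$ produces a dense open $U_2\subseteq\Spec(A)$ on which $f_{1,y},\dots,f_{s,y}$ form a regular sequence in $\bk\vars$. Setting $U=U_1\cap U_2$ yields a dense open subset of $\Spec(A)$, and this is the $U$ I expect to work.

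The main obstacle is matching the two flavors of Betti table. Fix $y\in U(\bk)$ with $\bk$ algebraically closed, hence perfect. Lemma~\ref{lem:flat extension} makes the inclusion $\bk[f_{1,y},\dots,f_{s,y}]\hookrightarrow\bk\vars$ faithfully flat; in particular it is injective, so $f_{1,y},\dots,f_{s,y}$ are algebraically independent and the natural map $\bk[f_1,\dots,f_s]\to\bk[f_{1,y},\dots,f_{s,y}]$ sending $f_i\mapsto f_{i,y}$ is a graded isomorphism of polynomial rings. Unwinding the compatibility from Lemma~\ref{lem:first step}(b) via the factorization $A_g[f_1,\dots,f_s]\to\bk[f_1,\dots,f_s]\to\bk\vars$ then identifies $M_y\cong M'_y\otimes_{\bk[f_1,\dots,f_s]}\bk\vars$. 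A graded free resolution $F_\bullet$ of $M'_y$ over $\bk[f_1,\dots,f_s]$ tensored up to $\bk\vars$ remains a graded free resolution of $M_y$ by flatness, so the associativity identity
\[
\Tor_i^{\bk\vars}\!\bigl(M'_y\otimes_{\bk[f_1,\dots,f_s]}\bk\vars,\,\bk\bigr)\;\cong\;\Tor_i^{\bk[f_1,\dots,f_s]}(M'_y,\bk)
\]
holds with matching internal degrees. Thus the Betti table of $M_y$ over $\bk\vars$ equals the Betti table of $M'_y$ over $\bk[f_1,\dots,f_s]$, which is constant on $U$ by the choice of $U_1$. The result follows.
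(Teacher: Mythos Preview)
Your proposal is correct and follows essentially the same route as the paper: apply Lemma~\ref{lem:first step} to descend to a finite polynomial ring, use Lemma~\ref{lem:finite betti} for $U_1$ and Corollary~\ref{cor:zariski condition} for $U_2$, set $U=U_1\cap U_2$, and then invoke Lemma~\ref{lem:flat extension} at each fiber to match Betti tables. The only differences are expository---you spell out why $A_g[f_1,\dots,f_s]$ is genuinely a polynomial ring and make the Tor comparison explicit---but the argument is the paper's.
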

\begin{proof}
We apply Lemma~\ref{lem:first step}, and let $M'$ be the $A_g[f_1,\dots,f_s]$-module satisfying the conclusion of that lemma.
Applying Lemma~\ref{lem:finite betti} to $M'$, we can assume that $M'$ has constant Betti table over a dense open subset $U_1\subseteq \Spec(A_g)$.  By Corollary~\ref{cor:zariski condition}, we can find a dense open subset $U_2\subseteq \Spec(A_g)$ where for all algebraically closed fields $\bk$ and all  $y\in U_2(\bk)$, the sequence $f_{1,y}, \dots, f_{s,y}$ forms a regular sequence.  We let $U=U_1\cap U_2$.  

Let $\bk$ be an algebraically closed field and let $y\in U(\bk)$.  We have a commutative diagram
\[
\xymatrix{
&A_g[f_1,\dots,f_s]\ar[r]\ar[d]&\bk[f_{1,y},\dots,f_{s,y}]\ar[d]_{i'}\\
A\vars \ar[r]&A_g\otimes_A A\vars\ar[r] &\bk\vars
}
\]
where the extension of the $\bk[f_{1,y}, \dots, f_{s,y}]$-module $M'_y$ by $i'$ is $M_y$.  
By Lemma~\ref{lem:flat extension}, $i'$ is faithfully flat, and thus the Betti table of $M'_y$ is the same as the Betti table of $M_y$.  Since $M'$ has a constant Betti table over $U$, the module $M$ also has a constant Betti table over $U$.
\end{proof}

\begin{corollary}\label{cor:constant Betti}
(We do not assume Hypothesis~\ref{defn:bR}.)
Let $A$ be an integral domain.  If $M$ is a finitely presented, graded $A\vars$-module, then $M$ has a constant Betti table over some dense open subset $U\subseteq \Spec(A)$.
\end{corollary}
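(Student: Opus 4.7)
The plan is to reduce to Theorem~\ref{thm:open subset} by replacing $A$ with an integral extension satisfying Hypothesis~\ref{defn:bR} and then descending the resulting open set. Following Remark~\ref{rmk:replace}, let $K$ be the fraction field of $A$, let $\overline{K}$ be an algebraic closure of $K$, and let $\wt{A}$ denote the integral closure of $A$ in $\overline{K}$. A short check shows that $\wt{A}$ is an integral domain whose fraction field is all of $\overline{K}$ (given $\alpha \in \overline{K}$ with minimal polynomial over $K$, clearing denominators produces $c \in A$ with $c\alpha \in \wt{A}$, so $\alpha = c\alpha/c$ lies in the fraction field of $\wt{A}$); and in positive characteristic every $a \in \wt{A}$ has $a^{1/p} \in \overline{K}$ satisfying $x^p - a = 0$, so Frobenius on $\wt{A}$ is surjective. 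Hence $\wt{A}$ satisfies Hypothesis~\ref{defn:bR}.

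Set $\wt{M} := \wt{A}\vars \otimes_{A\vars} M$, which is a finitely presented graded $\wt{A}\vars$-module since finite presentation is preserved under base change. Theorem~\ref{thm:open subset} produces a dense open $\wt{U} \subseteq \Spec(\wt{A})$ over which $\wt{M}$ has a constant Betti table. Let $\pi \colon \Spec(\wt{A}) \to \Spec(A)$ be the natural map. Since $\wt{A}/A$ is integral, $\pi$ is closed and surjective, so $U := \Spec(A) \setminus \pi(\Spec(\wt{A}) \setminus \wt{U})$ is open in $\Spec(A)$. Moreover, since $\wt{A}$ is a subring of the field $\overline{K}$, the only prime of $\wt{A}$ lying over the generic point $(0) \subset A$ is $(0) \subset \wt{A}$ itself, and this prime lies in $\wt{U}$. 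Thus the generic point of $\Spec(A)$ lies in $U$, and $U$ is dense by irreducibility of $\Spec(A)$.

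To finish, I will take any algebraically closed field $\bk$ and any $y \in U(\bk)$, corresponding to a homomorphism $\phi \colon A \to \bk$ with kernel $\fp$. I lift $\phi$ to some $\wt{\phi} \colon \wt{A} \to \bk$ by choosing a prime $\wt{\fp} \subset \wt{A}$ lying over $\fp$ (available by surjectivity of $\pi$) and then extending the induced embedding $A/\fp \hookrightarrow \bk$ across the algebraic extension $\wt{A}/\wt{\fp}$, using that $\bk$ is algebraically closed. By the definition of $U$ the resulting $\bk$-point $\wt{y}$ lies in $\wt{U}$, and the identification
\[
\wt{M}_{\wt{y}} = \bk\vars \otimes_{\wt{A}\vars}(\wt{A}\vars \otimes_{A\vars} M) = \bk\vars \otimes_{A\vars} M = M_y
\]
shows that $M_y$ carries the constant Betti table of $\wt{M}$ over $\wt{U}$. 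The part I expect to require the most care is checking that $\pi$ is closed and that its generic fiber is a single point, so that the descended $U$ is simultaneously open and nonempty; the lifting of $\bk$-points and the tensor-product identification are then essentially formal.
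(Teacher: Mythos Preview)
Your proposal is correct and follows essentially the same approach as the paper: pass to the integral closure $\wt{A}$ of $A$ in $\overline{K}$ (which satisfies Hypothesis~\ref{defn:bR}), apply Theorem~\ref{thm:open subset} there, and descend the open set using that $\pi$ is closed and integral. Your definition $U = \Spec(A) \setminus \pi(\wt{U}^c)$ is a slightly sharper choice than the paper's (it forces \emph{every} lift of a point of $U$ to land in $\wt{U}$ rather than just some lift), and your explicit check that the generic fiber of $\pi$ is a single point makes the density of $U$ transparent, but otherwise the arguments coincide.
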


\begin{proof}
Let $\ol{A}$ be the integral closure of $A$ in an algebraic closure of $K$, and let $\ol{K}$ be the fraction field of $\ol{A}$.  
Let $\overline{M}$ be the extension of $M$ to $\overline{A}\vars$. Since $\overline{A}$ and $\overline{K}$ satisfy Hypothesis~\ref{defn:bR} (see Remark~\ref{rmk:replace}), Theorem~\ref{thm:open subset} implies that $\ol{M}$ has a constant Betti table over some dense open subset $\overline{U}\subseteq \Spec(\ol{A})$.

Since the integral morphism $\Spec(\ol{A})\to \Spec(A)$ is closed~\stacks{01WM}, the image of $\ol{U}$ in $\Spec(A)$ contains a dense open set $U$.
Let $\bk$ be an algebraically closed field and let $y\in U(\bk)$. By integrality, there is $\bk$-point $y'$ lying over $y$, and by definition of $U$, $y'\in \ol{U}(\bk)$.  The map $y'\to y$ induces an isomorphism of $\overline{M}_{y'}$ and $M_y$ as $\bk\vars$-modules, and they therefore have the same Betti table.  Thus $M$ has a constant Betti table over $U$.
\end{proof}

\begin{example}
  Consider the case when $M$ is a quotient of $A\vars$ by $r$ linear forms $f_i = \sum_j a_{i,j} x_j$. Generically, these forms are linearly independent. More precisely, this is true over the complement of the vanishing locus of the $r \times r$ minors of the matrix $(a_{i,j})$. This gives a dense open set where $M_y$ is resolved by a Koszul complex, and hence the Betti numbers are given by $\beta_{i,i} = \binom{r}{i}$ for $0 \le i \le r$ and $0$ otherwise.

  As a second example, consider when $M$ is defined by determinantal conditions. Let $f_i= \sum_j a_{i,j} x_j$ for $i=1,\dots,6$ and consider the matrix $\begin{bmatrix} f_1 & f_2 & f_3 \\ f_4 & f_5 & f_6   \end{bmatrix}$. Let $M$ be the quotient of $R$ by the $2 \times 2$ determinants of this matrix. General facts about determinantal loci tell us that this ideal has codimension $\le 2$. Having codimension 2 is an open condition, and in that case, the nonzero Betti numbers are given by $\beta_{0,0} = 1$, $\beta_{1,2} = 3$, $\beta_{2,3} = 2$ (for example, by the Hilbert--Burch theorem).
\end{example}

\subsection{Connection with $\GL$-noetherianity and Stillman's conjecture}

We now combine Corollary~\ref{cor:constant Betti} with~\cite{draisma} to prove Stillman's conjecture.  

Throughout this section we fix a ground field $\bk$.  Fix degrees $d_1,\dots,d_r$. Let $S$ be the set of pairs $(\alpha,i)$ where $1 \le i \le r$ and $\alpha$ ranges over all exponent vectors of degree $d_i$ in the variables $x_1,x_2,\dots$. Let $\bA = \bk[c_{\alpha,i} \mid (\alpha,i) \in S]$. For $1\leq i \leq r$, let $\widetilde{f}_i=\sum c_{\alpha,i}x^\alpha \in \bA\vars$ be a universal polynomial of degree $d_i$.   We let $Q=\bA\vars/(\widetilde{f}_1,\dots,\widetilde{f}_r)$.
If $\bk'$ is a field over $\bk$, then there is a bijection between $\Spec(\bA)(\bk')$ and tuples $f_1,\dots,f_r\in \bk'\vars$ where $\deg(f_i)=d_i$; with notation from Definition~\ref{defn:restriction to point}, this bijection is given by $y\in \Spec(\bA)(\bk') \leftrightarrow \widetilde{f}_{1,y},\dots,\widetilde{f}_{r,y}$. 

There is a natural change of basis action by the group scheme $\GL_\infty$ on $\Spec(\bA)$.  The $\bA\vars$-module $Q$ is equivariant with respect to this action.

\begin{theorem}\label{thm:finite Betti}
The space $\Spec(\bA)$ decomposes into a finite disjoint union of locally closed subsets $\{U_j\}$ such that $Q$ has a constant Betti table over $U_j$ for each $j$. In particular, there are only finitely many distinct Betti tables among all ideals $(f_1,\dots,f_r)\subseteq  \bk'[x_1,\dots,x_n]$ generated in degrees $d_1,\dots,d_r$, for all $n$ and all fields $\bk'$ over $\bk$.
\end{theorem}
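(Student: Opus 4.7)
The plan is to prove Theorem~\ref{thm:finite Betti} by Noetherian induction on $\GL_\infty$-stable closed subsets of $\Spec(\bA)$, combining Corollary~\ref{cor:constant Betti} with Draisma's theorem that $\Spec(\bA)$ is $\GL_\infty$-noetherian (and hence satisfies DCC on $\GL_\infty$-stable closed subsets). The foundational observation is that Betti tables are invariant under the $\GL_\infty$-action at geometric points: since a change of basis $g \in \GL_\infty$ induces an isomorphism of graded $\bk$-algebras $Q_y \cong Q_{g\cdot y}$ for any $y \in \Spec(\bA)(\bk)$, the locus $W_\beta = \{y : Q_y \text{ has Betti table } \beta\}$ is $\GL_\infty$-stable for every Betti table $\beta$. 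Moreover, $W_\beta$ is locally closed in $\Spec(\bA)$ by semicontinuity of Betti numbers together with the uniform bound on projective dimension from Theorem~\ref{thm:smallalg}.

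The induction claim, proven by Noetherian induction on $\GL_\infty$-stable closed subsets $Y \subseteq \Spec(\bA)$, is that $Y$ admits a finite decomposition into locally closed strata over which $Q$ has constant Betti table. For the inductive step, given non-empty $Y$, I would pick a minimal prime $P$ over the radical ideal $I_Y$, yielding an irreducible component $C = \Spec(\bA/P)$; since $\bA/P$ is an integral domain, Corollary~\ref{cor:constant Betti} applied to the finitely presented $(\bA/P)\vars$-module $M = (\bA/P)\vars \otimes_{\bA\vars} Q$ produces a dense open $V \subseteq C$ over which $M$, and hence $Q$, has some constant Betti table $\beta$. I would then extract a proper $\GL_\infty$-stable closed subset $Y' \subsetneq Y$ with $Y \setminus Y' \subseteq W_\beta$ and apply the inductive hypothesis to $Y'$. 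Starting with $Y = \Spec(\bA)$, Draisma's theorem ensures termination after finitely many iterations.

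The main obstacle will be the extraction of $Y'$: since $Y$ may have infinitely many irreducible components (permuted by $\GL_\infty$), one cannot simply remove ``the other components''. The key input is the structural consequence of $\GL_\infty$-noetherianity that any $\GL_\infty$-stable closed subset decomposes into \emph{finitely many} $\GL_\infty$-irreducible components; this allows me to take $Y'$ to be the union of the $\GL_\infty$-closures of the components outside the $\GL_\infty$-orbit of $C$ together with the $\GL_\infty$-closure of $C \setminus V$, yielding a proper closed $\GL_\infty$-stable subset whose complement contains a non-empty $\GL_\infty$-translate of $V$ and is entirely contained in $W_\beta$.

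Finally, the ``in particular'' statement follows because every ideal $(f_1,\dots,f_r) \subseteq \bk'[x_1,\dots,x_n]$ corresponds to a $\bk'$-point $y \in \Spec(\bA)$ via the bijection described before the theorem, with $Q_y \cong \bk'\vars/(f_1,\dots,f_r)$; the Betti table of the original ideal in $\bk'[x_1,\dots,x_n]$ equals that of $Q_y$ by faithful flatness of the inclusion $\bk'[x_1,\dots,x_n] \hookrightarrow \bk'\vars$ (in the spirit of Lemma~\ref{lem:flat extension}), so the finite list of Betti tables for the $Q_y$ yields the finite list for all such ideals, establishing Stillman's conjecture.
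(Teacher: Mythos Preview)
Your approach is essentially the paper's: apply Corollary~\ref{cor:constant Betti} to get a dense open with constant Betti table, saturate under $\GL_\infty$ using invariance of Betti tables, and descend by noetherian induction on $\GL_\infty$-stable closed subsets via Draisma's theorem. The paper phrases the induction slightly more simply (take the $\GL_\infty$-orbit of the open set directly, then pass to a component of the complement), but the content is the same.

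Two points deserve attention. First, your appeal to Theorem~\ref{thm:smallalg} to argue that $W_\beta$ is locally closed is both unnecessary and undesirable: you never use local closedness of $W_\beta$ (your strata $Y\setminus Y'$ are locally closed by construction), and invoking Theorem~\ref{thm:smallalg} makes this argument depend on the ultraproduct proof of Stillman's conjecture, defeating the purpose of \S\ref{s:limit} as an independent route. Drop that sentence entirely. Second, in the ``in particular'' step, note that Definition~\ref{defn:constant betti} only quantifies over algebraically closed fields, and Lemma~\ref{lem:flat extension} assumes perfectness; so for an arbitrary field $\bk'$ you still need a word. The paper handles this in two moves: the identification $\bk'\vars \cong \bk'[x_1,\ldots,x_n]\otimes_{\bk'}\bk'\invl x_{n+1},\ldots\invr$ shows $\bk'\vars$ is free over $\bk'[x_1,\ldots,x_n]$ for \emph{any} $\bk'$, and then faithful flatness of $\bk'[x_1,\ldots,x_n]\to\ol{\bk'}[x_1,\ldots,x_n]$ reduces the general case to the algebraically closed one.
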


\begin{proof}
Applying Corollary~\ref{cor:constant Betti}, we have that $Q$ has a constant Betti table over a dense, open subset $U'\subseteq \Spec(\bA)$.  Let $U$ be the union of all $\GL_\infty$ translates of $U'$. Since Betti tables are $\GL_\infty$-invariant, $Q$ has a constant Betti table over $U$.  By~\cite[Theorem 1]{draisma}, $\Spec(\bA)\setminus U$ consists of finitely many irreducible components, each of which is $\GL_\infty$-invariant.  Passing to a component, we can apply the same argument. Continuing in this way, we obtain the desired stratification of $\Spec(\bA)$, and it is finite by~\cite[Theorem 1]{draisma}.

For any field $\bk'$, the canonical map $\bk'[x_1,\ldots,x_n] \otimes_{\bk'} \bk' \invl x_{n+1}, \ldots \invr \to \bk'\vars$ is an isomorphism. It follows that for $f_1, \ldots, f_r \in \bk'[x_1,\ldots,x_n]$ the Betti table of the quotient ring $\bk'[x_1,\ldots,x_n]/(f_1, \ldots, f_r)$ is the same as that of $\bk' \vars/(f_1, \ldots, f_r)$, and this implies the final statement of the theorem for algebraically closed fields $\bk'$. To get the statement for arbitrary $\bk'$, we let $\ol{\bk'}$ be an algebraic closure of $\bk'$ and note that the extension $\bk'[x_1,\dots,x_n] \to \ol{\bk'}[x_1,\dots,x_n]$ is faithfully flat, and hence Betti tables are unchanged under this extension.
\end{proof}

\begin{remark}
It would be interesting to extend~\cite[Theorem 1]{draisma} to spaces over $\bZ$, as this would yield characteristic free bounds in the above result.
\end{remark}

\begin{remark}
Theorem~\ref{thm:finite Betti} slightly generalizes Stillman's conjecture, as it also applies to ideals $(f_1,\dots,f_r)$ in $\bk\vars$ that use an infinite number of variables.
\end{remark}

\begin{remark}
The proof of Theorem~\ref{thm:finite Betti} is much less self-contained than our ultraproduct proof, however it is distinctly different in character: it does not rely on the notion of strength, but rather on a generalized noetherianity principle. This is pursued in more detail in~\cite{erman-sam-snowden} to obtain generalizations of Stillman's conjecture.  
\end{remark}

\begin{bibdiv}
\begin{biblist}

\bib{altman-kleiman}{book}{
	author ={Altman, Allen},
	author ={Kleiman, Steven},
	title = {A term of commutative algebra},
    date={2013},
	note = {\url{http://web.mit.edu/18.705/www/13Ed.pdf}},
}

\bib{ananyan-hochster}{article}{
   author={Ananyan, Tigran},
   author={Hochster, Melvin},
   title={Small subalgebras of polynomial rings and Stillman's conjecture},
   journal={J. Amer. Math. Soc.},
   volume={33},
   date={2020},
   pages={291--309},
note={\arxiv{1610.09268v3}},
}
	
\bib{bruns-herzog}{book}{
   author={Bruns, Winfried},
   author={Herzog, J\"urgen},
   title={Cohen-Macaulay rings},
   series={Cambridge Studies in Advanced Mathematics},
   volume={39},
   publisher={Cambridge University Press, Cambridge},
   date={1993},
   pages={xii+403},
}

\bib{draisma-survey}{article}{
   author={Draisma, Jan},
   title={Noetherianity up to symmetry},
   conference={
      title={Combinatorial algebraic geometry},
   },
   book={
      series={Lecture Notes in Math.},
      volume={2108},
      publisher={Springer, Cham},
   },
   date={2014},
   pages={33--61},
 }
 
\bib{draisma}{article}{
	author = {Draisma, Jan},
	title = {Topological noetherianity for polynomial functors},
        journal = {J. Amer. Math. Soc.},
        volume = {32},
        pages = {691--707},
        date={2019},
	note = {\arxiv{1705.01419v4}},
}

\bib{draisma-lason-leykin}{article}{
	author = {Draisma, Jan},
	author = {Laso\'{n}, Micha\l},
	author = {Leykin, Anton},
	title = {Stillman's Conjecture via generic initial ideals},
        journal = {Comm. Algebra},
        volume = {47},
        pages = {2384--2395},
        date={2019},
note = {\arxiv{1802.10139v2}},
}

\bib{eisenbud}{book}{
   author={Eisenbud, David},
   title={Commutative algebra with a view toward algebraic geometry},
   series={Graduate Texts in Mathematics},
   volume={150},
   publisher={Springer-Verlag, New York},
   date={1995},
   pages={xvi+785},
}

\bib{erman-sam-snowden}{article}{
   author={Erman, Daniel},
   author={Sam, Steven~V},
   author={Snowden, Andrew},
   title={Generalizations of Stillman's conjecture via twisted commutative algebras},
   journal={Int. Math. Res. Not. IMRN},
   date={2021},
   issue={16},
   pages={12281--12304},
   note={\arxiv{1804.09807v1}}
}

\bib{imperfect}{article}{
   author={Erman, Daniel},
   author={Sam, Steven~V},
   author={Snowden, Andrew},
   title={Big polynomial rings with imperfect coefficient fields},
   journal={Michigan Math. J.},
   volume={70},
   pages={649--672},
   date={2021},
   note={\arxiv{1806.04208v2}}
}

\bib{goldschmidt}{book}{
   author={Goldschmidt, David M.},
   title={Algebraic functions and projective curves},
   series={Graduate Texts in Mathematics},
   volume={215},
   publisher={Springer-Verlag, New York},
   date={2003},
   pages={xvi+179},
}

\bib{MM}{article}{
   author={Milnor, John W.},
   author={Moore, John C.},
   title={On the structure of Hopf algebras},
   journal={Ann. of Math. (2)},
   volume={81},
   date={1965},
   pages={211--264},
}

\bib{stillman}{article}{
   author={Peeva, Irena},
   author={Stillman, Mike},
   title={Open problems on syzygies and Hilbert functions},
   journal={J. Commut. Algebra},
   volume={1},
   date={2009},
   number={1},
   pages={159--195},
}

\bib{schoutens}{book}{
   author={Schoutens, Hans},
   title={The use of ultraproducts in commutative algebra},
   series={Lecture Notes in Mathematics},
   volume={1999},
   publisher={Springer-Verlag, Berlin},
   date={2010},
   pages={x+204},
 }

\bib{serre}{book}{
   author={Serre, Jean-Pierre},
   title={Galois cohomology},
   note={Translated from the French by Patrick Ion and revised by the
   author},
   publisher={Springer-Verlag, Berlin},
   date={1997},
   pages={x+210},
   isbn={3-540-61990-9},
}

\bib{sjodin}{article}{
   author={Sj\"odin, Gunnar},
   title={Hopf algebras and derivations},
   journal={J. Algebra},
   volume={64},
   date={1980},
   number={1},
   pages={218--229},
 }
 
 \bib{snellman-article}{article}{
   author={Snellman, Jan},
   title={Gr\"obner bases and normal forms in a subring of the power series
   ring on countably many variables},
   journal={J. Symbolic Comput.},
   volume={25},
   date={1998},
   number={3},
   pages={315--328},
}
 
\bib{snellman}{thesis}{
	author = {Snellman, Jan},
	title = {A graded subring of an inverse limit of polynomial rings},
        year={1998},
	note = {\url{http://www.diva-portal.org/smash/get/diva2:195258/FULLTEXT01.pdf}},
}

\bib{stacks-project}{misc}{
label={Stacks},
  author       = {The {Stacks Project Authors}},
  title        = {Stacks Project},
  year         = {2017},
  note = {\url{http://stacks.math.columbia.edu}},
}

 \bib{van-den-Dries-schmidt}{article}{
    author={van den Dries, L.},
    author={Schmidt, K.},
    title={Bounds in the theory of polynomial rings over fields. A
    nonstandard approach},
    journal={Invent. Math.},
    volume={76},
    date={1984},
    number={1},
    pages={77--91},
 }

\end{biblist}
\end{bibdiv}

\end{document}